\newcounter{rmk}
\setlist{nolistsep}
\newtheoremstyle{plain}{3mm}{3mm}{\slshape}{}{\bfseries}{.}{.5em}{}
\newtheoremstyle{definition}{2mm}{2mm}{}{}{\bfseries}{.}{.5em}{}
\theoremstyle{plain}
\newtheorem{theorem}{Theorem}
\newtheorem{lemma}[theorem]{Lemma}
\newtheorem{corollary}[theorem]{Corollary}
\theoremstyle{definition}
\newtheorem{remark}[rmk]{Remark}
\theoremstyle{plain}
\newtheorem*{namedthm}{\namedthmname}
\newcounter{namedthm}
\newcommand{\R}{\mathbb{R}}
\newcommand{\D}{\mathcal{D}}
\newcommand{\Gee}{\mathcal{G}}
\newcommand{\eps}{\epsilon}
\newcommand{\A}{\mathcal{A}}
\newcommand{\B}{\mathcal{B}}
\newcommand{\G}{\mathcal{G}}
\title{Pointwise bounds on Dirichlet Green's functions for a singular drift term.}
\author{Aritro Pathak}
\date{}
\begin{document}
\maketitle

\begin{abstract}
We introduce a technique, to obtain pointwise upper and lower bounds for the Green’s function of elliptic operators whose principal part is the Laplacian and that include a drift term diverging near the boundary like a power of the inverse distance with exponent less than~$1$, in the unit ball~$B(0,1)\subset\mathbb{R}^n$, $n\ge3$. The constants in the upper estimates are uniform in~$B(0,r)$ for each~$r<1$, with explicit dependence on~$r$. Further, more generally, they may be majorized by a function radially integrable up to the boundary. These appear to be the first estimates for such non-coercive drifts and remain new even for smooth drifts, suggesting extensions to singular potentials as well.
\end{abstract}

\maketitle
\section{Introduction.}

Let $\Omega=B(0,1)$ be the unit ball in $\R^{n}$ with $n\geq 3$ . We denote, $\delta(X)=\text{dist}(X,\partial\Omega)$. The coordinates in $\R^{n}$ are written as $(x_1,\dots,x_n)$. In this paper, we consider in $\R^{n}$ the linear second order operator with a singular drift term, given for some $0<\beta<1$, 
\begin{equation}\label{Laplacian}
    L u=-\Delta u+\B\cdot \nabla u =0, \ \ \   (\delta(X))^{1-\beta} |\B(X)|\leq M.
\end{equation}
 Here $0< M<\infty$ .

We find pointwise upper and lower estimates for the Green's function for such an operator when the Green's function is locally in $C^{3,\alpha}_{loc}$ for some $\alpha>0$. 
   
 We require the pointwise condition that, 
     \begin{align}\label{nonpositivee}
         -\frac{M}{\delta(x)^{2-\beta}}\leq \nabla\cdot \B\leq 0, \ 
     \end{align}
In particular this also means that $\nabla\cdot\B$ is negative in the distributional sense, that is,
 \begin{align}\label{nonpositivee2}
       \int_{\Omega} (\B\cdot \nabla v)\geq 0, \ \forall v\in C^{\infty}_{0}(\Omega)\  \text{where}\  v\geq 0,
     \end{align}
     
     Note that when the drift satisfies the pointwise estimate of \cref{nonpositivee}, then according to Theorem 1.2 of \cite{Ha24}, solutions to the adjoint equation $L_T u=-\nabla\cdot(\nabla u+\B u)=0$ exists, and one considers the Green's functions $\G(x,y),\G_T (x,y)$  corresponding to $L$ and $L_T$ respectively. Further, we also have $\G(x,y)=\G_T(y,x)$.\footnote{We have stated the bounds as above for simplicity, but in general it is enough to consider $|\B|^2$ and $|\nabla\cdot \B|$ to belong respectively to the more general Morrey spaces $M^{\frac{n}{2-2\beta}}$ and $M^{\frac{n}{2-\beta}}$ as considered in \cite{Ha24}.}

In \cite{sssz12},  operators are considered that have the Laplacian term along with certain divergence free drift terms with similar scale-invariant properties, along with the parabolic counterparts:
\begin{align}\label{lap}
    \partial_{t}u +\B\cdot \nabla u -\Delta u=0
\end{align}

For the corresponding parabolic operator, with divergence free measurable drift terms ($\nabla\cdot \B=0$), an old result of Nash \cite{Na58} shows the pointwise upper bound:
\begin{align*}
    |\Gee(x,t;y,s)|\leq \frac{C}{(t-s)^{\frac{n}{2}}}.
\end{align*}
For the case where $$\nabla\cdot \B=0$$ in \cref{lap}, in \cite{sssz12} several regularity results are presented when writing $\B=\nabla \cdot\ d$ for an anti-symmetric matrix $d$ and considering the anti-symmetric part of the matrix $I+d$ to belong a BMO space with the symmetric part of $I+d$ being  comparable to the identity matrix $I$ up to a positive constant .In case the $\nabla\cdot \B=0$ condition is relaxed, one is also referred to the regularity results in \cite{Naz12} where the condition is relaxed to $\nabla\cdot \B=0\leq 0$. More recently, one has referred to the work in \cite{KimSa} for pointwise upper bounds on Green's function in bounded domains for the operator with a more general elliptic principal term and where the lower-order terms belong to certain $L^p$ spaces for $p\geq n$, and then further work in the setting of not necessarily bounded domains, of Sakellaris \cite{Sake} for lower-order terms in certain weak $L^{n,q}$ spaces, and of Mourgoglou \cite{Morg} for the lower order terms in Stummel-Kato spaces and variants of it. For Green's function corrsponding to drifts diverging in points within the domain, one is referred to \cite{MM22}.

It is seen that in general bounded domains, our singular drift term does not belong to either the weak $L^p$ space considered in \cite{Sake}, nor the Stummel-Kato spaces considered in Section 6 of \cite{Morg}. We note that the definition of Morrey spaces considered in \cite{Ha24} is more general than the definition considered in \cite{Morg}, where one requires the Morrey type estimate to hold in balls centered on the boundary as well, and our drift does satisfy the definition of \cite{Ha24}.

%The work in \cite{hlmp22} deals with only more general elliptic terms (and no lower order terms) in place of only the Laplacian but where the elliptic coefficients are unbounded, specifically the antisymmetric part is only in the space BMO.

In this paper, the drift term is singular and behaves like an exponent of the inverse of the distance to the boundary, the exponent being strictly less than $1$. While we work specifically for the case of a drift that is pointwise bounded by $(1/\delta(x)^{1-\beta})$, the method works as long as we have a drift that is majorized by a radial function that is radially integrable up to the boundary, for the geometry of the unit ball considered here. Thus, locally the drift is merely radially integrable up to the boundary, and not required to be in $L^n$ or $L^{n,p}$ spaces.

The results here correspond to a situation where the corresponding bilinear form is not coercive, and thereby expected energy estimates fail. This appears to be the first instance of such pointwise estimates on the Green's function in such a setting.

More generally, we can define the operator:
\begin{equation}\label{imp22}
    L_1 u=-\nabla\cdot(\A\nabla u) +\B\cdot \nabla u =0, \ \ \  
\end{equation}

where have, some constants $\infty> \Lambda\geq \lambda>0$ , and $M$ as before, so that, 
\begin{align}\label{imp23}
    \A_{ij}(X) \eta_i \eta_j \geq \lambda |\eta|^2, |\A_{ij}(X)\eta_i \psi_j|\leq \Lambda |\eta| |\psi|, \ \ \text{for all} \ \eta,\psi\in \R^{n}\setminus \{0\}, \ \ \text{and} \ \delta(X)^{\beta}|\B(X)|\leq M.
\end{align}

%We consider the coefficients $\A,\B$ to be merely measurable. Further, we require positive constants $M,\lambda$ so that we have the bounds: 

 %\indent We note that the claimed proof in Theorem 4.3(a) in Chapter III of \cite{Singdr} on the upper bound on the Green's function for the elliptic operator in the half plane in $\R^{n}$, with $n\geq 3$, with a singular drift term bounded by the inverse distance to the boundary, is wrong, as the purported elliptic version of Lemma 2.10 in Chapter III of \cite{Singdr} is not true, since in turn it relies on the upper bound on the Green's function for the parabolic equation with the drift term which is proved in Chapter I of \cite{Singdr} and which does not generalize to the elliptic case.  We note that for the pointwise bounds on the Green's functions, one does not require interior Harnack chains or corkscrews or an ADR boundary, but the doubling property is shown to hold for such chord-arc domains with ADR boundary.

%Existence and uniqueness of the Green's function for such drift terms. The existence should follow with a limiting argument (to show this) for the Dirac delta, for smooth coefficients $\B$ and then with a limiting argument similar to Lemma 3.37 of Chapter I of \cite{Singdr}. 

We require the Green's function $G(\cdot,Y)$ for the operator \cref{Laplacian} to be in $C^{3,\alpha}_{loc}$ for some $\alpha>0$, in order to derive the pointwise upper bounds for the operator $L$. By the Schauder estimates, it is enough to consider the drift term $|\B|$ to be locally in $C^{0,\alpha}$ for an arbitrary $\alpha>0$. See for example, Theorem 6.10 of \cite{Gt}.

%We refer to the interior regularity results from Section 6.3 of \cite{Evans}. By repeated use of Morrey's inequality (see for example Section 5 of \cite{Evans}), for any open set $U$ with $Y\notin U$, we get a continuous embedding of $W^{k,p}(U)\to C^{r,\alpha}(U)$ where $k=(n/p) +r+\alpha$, $\alpha\in (0,1)$, $r\in \N, k\geq 0,p\geq 1$. Consider $p=2$ here. 

%By Theorem 2 in Section 6.3 of \cite{Evans} we get that the solution $u$ to $L_1u=0$ with $\B\in C^{m+1}(U)$ belongs to $u\in W^{m+2,2}_{\text{loc}}$ . 

%Thus, by choosing $m=\frac{n}{2}+\alpha +1$, for any $\alpha\in (0,1)$ it is enough to require that the coefficient $\B$ belongs to $C^{\frac{n}{2}+\alpha+2}(U)$ so that the solution belongs to $C^{3,\alpha}$.

Here we state the three main results of this paper;

\begin{theorem}\label{thm1}
    For the elliptic operator in \cref{imp22} with the coefficients satisfying \cref{imp23}, we have the bound for the Green's function for the operator in \cref{imp22}: for any $z,y\in \Omega$ with $|z-y|\leq \frac{1}{2}\delta(y):=\frac{1}{2}\text{dist}(y,\partial\Omega)$ we have 
    \begin{equation}\label{lower}
        \G(y,z)\geq K(M,\lambda)\frac{1}{|z-y|^{n-2}}.
    \end{equation}
\end{theorem}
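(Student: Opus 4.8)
The plan is to exploit the locality of the estimate: since we only need a lower bound on $\G(y,z)$ for $z$ very close to $y$ (within $\tfrac12\delta(y)$ of it), we are working inside the ball $B(y,\tfrac12\delta(y))$, where the drift $\B$ is a \emph{bounded} vector field. Indeed, on this ball $\delta(X)\ge\tfrac12\delta(y)$, so $|\B(X)|\le M\,\delta(X)^{-\beta}\le M\,(\tfrac12\delta(y))^{-\beta}$, and the singularity plays no role locally; what remains is a genuinely uniform-ellipticity + bounded-drift argument, but with constants that must be tracked carefully so that the final $K$ depends only on $M$ and $\lambda$ (and not on $\delta(y)$, which a naive application of interior estimates would introduce).

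The main tool I would use is the comparison/maximum principle together with the known behaviour of the fundamental solution of the Laplacian. First I would recall that $z\mapsto \G(y,z)=\G_T(z,y)$ is, as a function of $z$, a nonnegative solution of the adjoint equation $L_1^{*}$ in $\Omega\setminus\{y\}$ with a pole at $z=y$, and near the pole it behaves like $c_n|z-y|^{2-n}$ times a harmless factor (since the principal part $\A$ is uniformly elliptic, the leading singularity of any Green's function is $\asymp |z-y|^{2-n}$ with constants controlled by $\lambda,\Lambda$; more precisely, freezing $\A$ at $y$ and using that $\A$ is merely bounded-measurable, one still has the two-sided bound $c(\lambda,\Lambda)|z-y|^{2-n}$ for the Green's function of the \emph{constant-coefficient} comparison operator). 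Then I would set up a subsolution: on the annular region $\tfrac14\delta(y)\le|z-y|\le\tfrac12\delta(y)$ I want to compare $\G(y,\cdot)$ from below with a multiple of $w(z)=|z-y|^{2-n}-(\tfrac12\delta(y))^{2-n}$, which vanishes on the outer sphere. The key computation is that $L_1^{*}$ applied to $|z-y|^{2-n}$ is controlled: the principal part contributes something of order $|z-y|^{-n}$ (with sign depending on ellipticity) and the drift contributes a term of order $|\B|\,|z-y|^{1-n}\le M(\tfrac12\delta(y))^{-\beta}|z-y|^{1-n}$, which on the annulus (where $|z-y|\sim\delta(y)$) is of order $M\,\delta(y)^{-\beta}\delta(y)^{1-n}=M\,\delta(y)^{1-\beta-n}$ — and this must be absorbed. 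Here the scaling is the crucial point: rescale $z=y+\delta(y)\zeta$ so that the annulus becomes $\tfrac14\le|\zeta|\le\tfrac12$ of unit size, the rescaled operator has principal part still uniformly elliptic with the same $\lambda,\Lambda$, and the rescaled drift has size $\le \delta(y)\cdot M\delta(y)^{-\beta}=M\delta(y)^{1-\beta}\le M$ (using $\delta(y)\le 1$ and $\beta<1$). Thus after rescaling we face a \emph{dimensionless} problem: uniformly elliptic operator with drift bounded by $M$ on a fixed annulus, and we want a lower bound on its Green's function at the inner sphere depending only on $M,\lambda$.

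On this fixed-geometry rescaled problem I would argue as follows. The rescaled $\tilde\G$ is a positive supersolution-type object with a $|\zeta|^{2-n}$ pole at the origin; by the constant-coefficient comparison (freezing $\A$, treating the lower-order and the $\A$-oscillation terms as bounded perturbations) together with the Harnack inequality for the rescaled uniformly elliptic operator with bounded drift — whose constant depends only on $n,\lambda,\Lambda,M$ — I get $\tilde\G(\text{inner sphere})\ge c(n,\lambda,\Lambda,M)$, and unwinding the scaling (the Green's function of the unrescaled operator equals $\delta(y)^{2-n}$ times that of the rescaled one, and on the inner sphere $|z-y|=\tfrac14\delta(y)$ so $\delta(y)^{2-n}\asymp |z-y|^{2-n}$) gives exactly $\G(y,z)\ge K(M,\lambda)|z-y|^{2-n}$ on that sphere. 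Finally, to get the bound on the full punctured ball $0<|z-y|\le\tfrac12\delta(y)$ and not just on one sphere, I would run a maximum-principle comparison on $\tfrac14\delta(y)\le|z-y|\le\tfrac12\delta(y)$ against the barrier $a\big(|z-y|^{2-n}-(\tfrac12\delta(y))^{2-n}\big)$ with $a$ chosen from the inner-sphere bound, and for $|z-y|\le\tfrac14\delta(y)$ use the comparison with the constant-coefficient fundamental solution directly (the pole dominates there). The main obstacle I anticipate is the bookkeeping in making \emph{all} constants genuinely depend only on $M$ and $\lambda$ (and $n$): one has to be careful that the rescaling really does neutralize the $\delta(y)$-dependence in the drift — which works precisely because $\beta<1$ so that $\delta(y)^{1-\beta}$ is bounded — and that the Harnack/De Giorgi--Nash--Moser constants invoked for the rescaled problem are the scale-invariant ones. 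A secondary technical point is justifying the comparison principle for the adjoint operator $L_1^{*}$ with its (now bounded, after localization) drift; this is standard once locality has been used, but should be stated carefully.
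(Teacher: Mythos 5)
Your overall architecture (localize to $B(y,\tfrac12\delta(y))$, observe that zooming to scale $\delta(y)$ turns the drift into one bounded by $M$ because the exponent of $1/\delta$ is strictly less than $1$, then prove a unit-scale lower bound and undo the scaling) is sound in outline, and the scaling observation is exactly the reason the hypothesis $|z-y|\le\tfrac12\delta(y)$ makes the drift harmless; the Harnack step and the implicit domain monotonicity are also fine. The gap is in the one step that actually produces the lower bound at unit scale. You get it by ``freezing $\A$ at $y$ and treating the $\A$-oscillation and lower-order terms as bounded perturbations,'' together with the barrier $w(z)=|z-y|^{2-n}-(\tfrac12\delta(y))^{2-n}$. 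But Theorem~\ref{thm1} assumes only the ellipticity bounds \cref{imp23}, so $\A$ is merely bounded measurable: the term $\nabla\cdot\big((\A-\A(y))\nabla\,\cdot\,\big)$ is a second-order perturbation of size comparable to $\Lambda-\lambda$, not a small or lower-order one, and cannot be absorbed; likewise $w$ is not a subsolution of the adjoint operator for variable $\A$ (the principal part applied to $|z-y|^{2-n}$ is only a distribution with no sign), and even for $\A=I$ it is harmonic, so there is nothing positive left to ``absorb'' the drift term $-\B\cdot\nabla w-(\nabla\cdot\B)w$, whose sign is indefinite (and Theorem~\ref{thm1} imposes no sign condition on $\nabla\cdot\B$); one would need a genuinely different barrier, e.g.\ exponential in $|z-y|$. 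Finally, the input you invoke --- ``the leading singularity of any Green's function is $\asymp|z-y|^{2-n}$ with constants controlled by $\lambda,\Lambda$'' --- is precisely the Gr\"uter--Widman estimate, i.e.\ essentially the statement to be proved, so as written the argument is circular unless you quote such a bound as a known theorem for the rescaled bounded-drift operator and verify its hypotheses.

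For contrast, the paper proves the bound directly from the defining identity \cref{eqimpp} by the Gr\"uter--Widman testing scheme, which needs no smoothness of $\A$ and no barrier: a cutoff supported in the annulus $\{r/4<|x-y|<3r/2\}$, used with the test function $\G\eta$, yields a Caccioppoli-type inequality $\int_{r/2<|x-y|<r}|\nabla\G|^2\lesssim r^{-2}\int_{r/4<|x-y|<3r/2}\G^2$ (the drift contributions are controlled exactly because $r\le\tfrac12\delta(y)$); a second cutoff equal to $1$ on $B_{r/2}(y)$ and vanishing outside $B_r(y)$ gives $1=\phi(y)\le C r^{n-2}\sup_{r/4\le|x-y|\le 3r/2}\G$ after Cauchy--Schwarz and the previous estimate; Harnack on the annulus then transfers the supremum to $\G(y,z)$. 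If you wish to keep your rescaling framework, the repair is to quote at unit scale an established lower Green's function bound for uniformly elliptic operators with bounded drift (or reproduce the testing argument there), rather than a frozen-coefficient barrier comparison.
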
 
\begin{theorem}\label{thm2}
    Consider the operator of \cref{Laplacian} and the Dirichlet Green's function corresponding to this operator. Further suppose that $\B\in C^{1,\alpha}(\Omega')$ for some $\alpha>0$, for any compactly supported subdomain in $\Omega$, and that $\B$ satisfies  \cref{nonpositivee2}.  For any $x\in \Omega$ with $|x|\leq \frac{1}{2}$, we have for some constant $K'$ dependent on $M, \lambda$ only,
    \begin{equation}
        \G(x,0)\leq K'(M,\lambda)\frac{1}{|x|^{n-2}}.
    \end{equation}
\end{theorem}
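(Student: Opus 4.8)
The plan is to obtain the upper bound $\G(x,0) \le K'/|x|^{n-2}$ by a comparison/barrier argument, exploiting the divergence-negativity hypothesis \cref{nonpositivee2} together with the radial integrability of the singular drift. Let me think about the structure.

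We want to bound $\G(x,0)$ from above. Fix $y=0$ (WLOG, since $|x|\le 1/2$ means $0$ is an interior point with $\delta(0)=1$). The function $u(x) = \G(x,0)$ solves $Lu = -\Delta u + \B\cdot\nabla u = 0$ away from the pole, vanishes on $\partial\Omega$, and has the singularity $u(x) \sim c_n|x|^{-(n-2)}$ near $0$.

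The key idea: use the fundamental solution $E(x) = c_n|x|^{-(n-2)}$ of the Laplacian (or a scalar multiple) as a supersolution-type comparison function, and control the error coming from the drift term $\B\cdot\nabla u$. Since $-\Delta E = \delta_0$ (distributionally), the candidate comparison is $w(x) = C|x|^{-(n-2)}$ for large $C$. We'd like $Lw \ge 0$ near the pole so that by the maximum principle $\G(x,0)\le w(x)$ on a small ball. But $Lw = -\Delta w + \B\cdot\nabla w = \B\cdot\nabla w$ away from $0$, and $\nabla w$ points inward (toward the pole), so $\B\cdot\nabla w$ has no definite sign. This is where the adjoint/divergence structure must enter.

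The more robust route: integrate against $\G_T$ (the adjoint Green's function), use $\G(x,0)=\G_T(0,x)$, and represent $\G(x,0)$ via the weak formulation. Concretely, test the equation for $\G_T(0,\cdot)$ (which solves $L_T v = -\nabla\cdot(\nabla v + \B v)=0$ with pole at $0$) against a suitable cutoff of the Newtonian potential, or conversely use the Green's identity
\[
\G(x,0) = \int_\Omega \G(x,z)\,(-\Delta_z \phi(z))\,dz \quad\text{type relations}
\]
after integrating by parts and moving the drift term onto the regular factor. The hypothesis \cref{nonpositivee2}, $\int_\Omega \B\cdot\nabla v \ge 0$ for $v\ge 0$, is precisely what's needed to show that the drift contribution to $\G$ is *nonpositive* relative to the Laplacian's Green's function: morally, $\G(x,0) = N(x,0) - (\text{correction})$ where $N$ is the Newtonian potential and the correction has a sign. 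So the plan is: (i) write the resolvent/Neumann-series-style identity $\G = N - N(\B\cdot\nabla)\G$ (or the integrated-by-parts version $\G = N + (\nabla N)\cdot\B\,\G$-type kernel with the divergence landing favorably), (ii) use \cref{nonpositivee2} to conclude the correction term has the favorable sign, yielding $\G(x,0)\le N(x,0) = c_n|x|^{-(n-2)}$ up to absorbing the $M,\lambda$-dependence into the constant via the radial-integrability bound on $\B$, (iii) handle the boundary behavior — since $\G$ vanishes on $\partial\Omega$ while $N$ does not, one actually compares with the Dirichlet Green's function $G_\Delta$ of the plain Laplacian on $B(0,1)$, which still satisfies $G_\Delta(x,0)\le c_n|x|^{-(n-2)}$.

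The main obstacle I anticipate is step (ii)—making the sign argument rigorous near the singularity. The correction-term kernel involves $\nabla_z \G(\cdot,z)$ which blows up like $|z|^{-(n-1)}$ near the pole, multiplied by the singular drift $|\B|\lesssim\delta^{-(1-\beta)}$; near $0$ the drift is bounded (since $\delta\approx 1$ there), so the local singularity is integrable ($|z|^{-(n-1)}$ is integrable in $\R^n$), but one must justify the integration by parts that transfers $\nabla\cdot\B$ onto this singular factor, i.e. verify there is no concentrated boundary-type contribution at the pole. I would handle this by excising a small ball $B(0,\epsilon)$, doing the integration by parts on $\Omega\setminus B(0,\epsilon)$ where everything is smooth (using $\B\in C^{1,\alpha}_{\mathrm{loc}}$ and $\G\in C^{3,\alpha}_{\mathrm{loc}}$), controlling the $\partial B(0,\epsilon)$ boundary integral (which is $O(1)$ and gives exactly the right principal term $c_n|x|^{-(n-2)}$ in the limit), and then letting $\epsilon\to 0$. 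The remaining subtlety is the boundary $\partial\Omega$: there the drift is genuinely singular, but the radial integrability up to $\partial\Omega$ combined with the fact that $\G(x,\cdot)$ and $\G_T(0,\cdot)$ both vanish there should make all boundary terms at $\partial\Omega$ vanish and keep the volume integral convergent—this is the place where the hypothesis $\beta>0$ (as opposed to $\beta=0$) is used, and where the argument genuinely differs from the classical $L^n$-drift theory.
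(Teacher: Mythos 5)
There is a genuine gap at the heart of your step (ii), and it is exactly the step you flagged as the main obstacle. Writing $\G(x,0)=G_\Delta(x,0)-\int_\Omega G_\Delta(x,z)\,\B(z)\cdot\nabla_z\G(z,0)\,dz$ (with $G_\Delta$ the Dirichlet Green's function of the Laplacian), the hypothesis \cref{nonpositivee2} only controls integrals of the form $\int_\Omega \B\cdot\nabla v$ with $v\ge 0$ and \emph{no weight}. Here the integrand carries the weight $G_\Delta(x,\cdot)$, and when you integrate by parts to put it into the admissible form you pick up
\begin{equation*}
\int_\Omega G_\Delta(x,z)\,\B\cdot\nabla_z\G(z,0)\,dz \;=\; \int_\Omega \B\cdot\nabla_z\bigl(G_\Delta(x,z)\G(z,0)\bigr)\,dz \;-\; \int_\Omega \G(z,0)\,\B(z)\cdot\nabla_z G_\Delta(x,z)\,dz,
\end{equation*}
and only the first term on the right has a sign via \cref{nonpositivee2}. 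The second term is unsigned (the direction of $\nabla_z G_\Delta(x,z)$ relative to $\B$ is arbitrary) and involves $\G(z,0)$ itself against the singular drift, so it is of the same order of difficulty as the quantity you are trying to bound. A Neumann-series or absorption argument to close this loop would need smallness of the drift contribution, which is precisely what is unavailable here: $|\B|\sim\delta^{-(1-\beta)}$ is not in $L^{n}$ near $\partial\Omega$, $M$ is arbitrary, and the bilinear form is non-coercive. Note also that for $\beta=0$ a pointwise upper bound is known to fail (the counterexample cited in the introduction), while in your sketch $\beta>0$ enters only through integrability of some error integrals; a soft sign/comparison argument of the kind proposed would not see the difference between $\beta>0$ and $\beta=0$ sharply enough, which is a strong indication the mechanism cannot work as stated.

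For comparison, the paper's proof does not attempt a one-shot comparison with $G_\Delta$. It tests the equation with $(1/s-1/\G)^{+}$ to get a decay estimate on the level-set volumes $|\Omega_s|$, packages this as a modified Lorentz-type bound valid above a threshold level $2s^{*}$, converts it into pointwise bounds via Moser's local boundedness estimate on balls $B(y,|y|/4)$, and then the real work is a contradiction argument on the geometry of the level sets: at points of minimal and maximal distance from the pole the spherical form of the Laplacian yields one-sided radial differential inequalities, and an iteration along these extremal points (Lemmas~\ref{lemma} through~\ref{lemma9}), using the radial integrability of the drift (the quantity $\Delta(a,b)$ being uniformly bounded, which is where $\beta>0$ and the ball geometry are essential), forces the level set $\{\G\ge 2s^{*}\}$ to occupy a definite fraction of $B(0,1/L)$. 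If you want to salvage a comparison-type route you would need to control the unsigned term above, and that already requires the kind of quantitative information about $\G$ and its radial derivative that the paper's level-set iteration is designed to produce.
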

Here, the pole of Green's function has been considered to be only at the origin.

In case the solution to the adjoint equation also exists, we can use \cref{thm2} and some standard arguments to show that,
\begin{theorem}\label{thm3}
    Consider the operator of \cref{Laplacian} and the Dirichlet Green's function corresponding to this operator. Further suppose that $\B\in C^{1,\alpha}(\Omega')$ for some $\alpha>0$, for any compactly supported subdomain in $\Omega$, and that $\B$ satisfies  \cref{nonpositivee2}.  Then for any $y\in B(0,r)$ with $r<1$, and for any $x\in \Omega$ with $|x-y|\leq \frac{1}{2}\delta(y)$, we have for some constant $K'$ dependent on $M, \lambda$ only,
    \begin{equation}
        \G(x,y)\leq K'(M,\lambda,r)\frac{1}{|x-y|^{n-2}}.
    \end{equation}
    Here the bound $K'(M,\lambda,r)$ depends on $r$ as well.
\end{theorem}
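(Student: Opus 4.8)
The plan is to deduce \cref{thm3} from \cref{thm2} by a scaling and translation argument, exploiting the conformal/affine structure of the ball and the scale‑invariance of the drift bound. First I would fix $y\in B(0,r)$ and, since $|y|\le r<1$, observe that $\delta(y)=1-|y|\ge 1-r>0$; the target estimate only concerns $x$ with $|x-y|\le\tfrac12\delta(y)$, so the whole configuration lives inside the interior ball $B(y,\tfrac12\delta(y))\subset B(0,\tfrac{1+r}{2})$, comfortably away from $\partial\Omega$. The idea is to rescale the ball $B(y,\rho)$ with $\rho$ comparable to $\delta(y)$ to the unit ball centred at the origin: set $\Phi(X)=\tfrac{X-y}{\rho}$ and $\tilde u(X)=u(y+\rho X)$. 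Under this change of variables $-\Delta$ becomes $-\rho^{-2}\tilde\Delta$ and the drift $\B$ transforms to $\tilde\B(X)=\rho\,\B(y+\rho X)$; the Green's function transforms by $\tilde\G(X,Z)=\rho^{\,n-2}\,\G(y+\rho X,\,y+\rho Z)$, so an estimate $\tilde\G(X,0)\le K'(M',\lambda)|X|^{2-n}$ turns into $\G(x,y)\le K'(M',\lambda)|x-y|^{2-n}$ after undoing the scaling — the factor $\rho^{n-2}$ cancels exactly because the estimate is $(2-n)$‑homogeneous. This is precisely why \cref{thm2}, stated at the origin, is the right building block.

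The point requiring care is that after rescaling $B(y,\rho)$ to the unit ball, the drift bound of \cref{Laplacian} is no longer $(\delta_{B(0,1)})^{1-\beta}|\tilde\B|\le M$ but rather $(\delta_{B(0,1)})^{1-\beta}|\tilde\B(X)|\le M\,\rho^{\beta}\,\big(\tfrac{\delta_{B(y,\rho)}(y+\rho X)}{\rho\,\delta_{B(0,1)}(X)}\big)^{1-\beta}$; on the interior ball $B(y,\rho)$ the distance to $\partial\Omega$ is comparable to $\delta(y)$ up to a factor depending on $r$, so the transformed constant is $M'=M\cdot c(r)$ with $c(r)\to\infty$ as $r\to1$. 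This is exactly the source of the claimed $r$‑dependence of $K'(M,\lambda,r)$: apply \cref{thm2} to the rescaled operator with constant $M'=M'(M,r)$, obtain $K'(M',\lambda)$, and record it as $K'(M,\lambda,r)$. One also checks that the sign condition \cref{nonpositivee2} is preserved: $\nabla\cdot\tilde\B(X)=\rho^2(\nabla\cdot\B)(y+\rho X)$, which is $\le 0$, and the integral condition $\int(\tilde\B\cdot\nabla v)\ge 0$ for nonnegative $v\in C_0^\infty$ follows by the same change of variables. Likewise $\tilde\B\in C^{1,\alpha}_{\mathrm{loc}}$ since $\Phi$ is affine, so the regularity hypothesis of \cref{thm2} transfers and the rescaled Green's function is still in $C^3_{\mathrm{loc}}$ as required.

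A subtlety is that \cref{thm2} is stated for the Green's function with pole at the origin and evaluated at points $x$ with $|x|\le\tfrac12$ — i.e. with the pole exactly at the centre of the unit ball — whereas here the pole $y$ is an interior point of $\Omega$ that need not be its centre. The rescaling $\Phi$ moves $y$ to the origin of the new unit ball $B(0,1)$, but the ambient domain for the new operator is $\Phi(\Omega)=B(-y/\rho,\,1/\rho)$, which is a large ball, \emph{not} $B(0,1)$. To handle this cleanly I would instead work directly in the interior ball: restrict attention to $\Omega'' := B(y,\tfrac12\delta(y))$, note that $\G(\cdot,y)$ restricted to $\Omega''$ dominates (by the maximum principle, since the boundary data only increases) the Dirichlet Green's function $\G_{\Omega''}(\cdot,y)$ for the same operator on $\Omega''$ — and, crucially, is itself bounded above by the sum of $\G_{\Omega''}(\cdot,y)$ and a harmless harmonic correction that is bounded on $\tfrac12\Omega''$. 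Actually the clean route is: rescale $\Omega''=B(y,\tfrac12\delta(y))$ to $B(0,1)$, apply \cref{thm2} (whose hypotheses are about $B(0,1)$) to the rescaled Dirichlet Green's function on $B(0,1)$ with pole at the origin, and then control the difference $\G(\cdot,y)-\G_{\Omega''}(\cdot,y)$ on $\tfrac12\Omega''$ by noting it solves $L w=0$ in $\Omega''$ with $w=\G(\cdot,y)$ on $\partial\Omega''$, and $\G(\cdot,y)$ is bounded on $\partial\Omega''$ (being smooth away from the pole, with a bound depending on $r$). The main obstacle, and the step I would spend the most effort on, is this last comparison: making precise — with constants tracked in terms of $r$ — that the \emph{global} Dirichlet Green's function on $\Omega$ differs from the \emph{local} one on the interior ball only by a term controlled near the pole, using the elliptic maximum principle for $L$ (which requires the sign condition \cref{nonpositivee2}, or a Harnack/barrier argument) to bound $\G(\cdot,y)$ on $\partial B(y,\tfrac12\delta(y))$; everything else is the bookkeeping of the affine change of variables.
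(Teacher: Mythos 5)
Your second half is essentially the paper's argument: compare the global Green's function with the Dirichlet Green's function of the interior ball $B(y,\tfrac12\delta(y))$, note that the difference solves $Lw=0$ there so the maximum principle bounds it by the boundary values, and use a standard interior estimate for the ball's Green's function (the paper quotes Kim--Sakellaris/Mourgoglou, since the drift is bounded on that ball; your rescale-and-apply-\cref{thm2} variant would also do, and you correctly check that the sign condition \cref{nonpositivee2} and the regularity survive the affine change of variables). The first route you sketch (pure scaling of $\Omega$) is indeed broken for the reason you notice, and your self-correction to the local-ball comparison is the right fix.

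The genuine gap is the step you defer: a \emph{quantitative} bound $\G(x,y)\le K(M,\lambda,r)$ for $x\in\partial B(y,\tfrac12\delta(y))$, uniform over all admissible drifts. Saying that $\G(\cdot,y)$ is ``bounded on $\partial\Omega''$, being smooth away from the pole'' only gives finiteness for each fixed drift, not a constant depending on $M,\lambda,r$ alone, which is the whole content of the theorem; and the tools you name do not supply it: the maximum principle applied in $\Omega\setminus B(y,\tfrac12\delta(y))$ merely says the maximum sits on that sphere, while a barrier dominating $\G(\cdot,y)$ near $y$ would presuppose the very singularity bound being proved. The missing idea is how to transport the \cref{thm2} bound, which is anchored at the pole $0$, to a pole at an arbitrary $y\in B(0,r)$. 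The paper does this by a Harnack chain of roughly $\log\frac{1}{1-r}$ steps along the segment from $0$ to $y$, alternating between the equation and its adjoint through the identity $\G(x,y)=\G_T(y,x)$: Harnack for $L$ moves the first argument, Harnack for $L_T$ moves the pole, and after $\approx 2m$ steps (with $2^m\delta(y)\approx 1$) one gets $\G(x,y)\le K(m)$ on $S(y,\tfrac12\delta(y))$, with the $r$-dependence entering precisely through $m$. Your proposal never invokes the adjoint Green's function or the symmetry relation, so there is no mechanism in it for moving the pole off the origin; without that, the comparison constant on $\partial B(y,\tfrac12\delta(y))$ is not controlled and the proof does not close.
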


We prove \cref{thm1} in Section 3, and \cref{thm2,thm3} in Section 4.
%\begin{remark}
 %   From \cref{imp2}, we see that the drift term is bounded from above by $|\B(X)|\leq M/\delta(X)$. This is a natural condition for which scale invariant estimates on elliptic functions are obtained. See \cite{sssz12} for further discussion. One can consider in bounded domains $\Omega$ with $|\Omega|<\infty$, a drift term scaling like $|\B(X)|\leq (M/\delta(X))^{1-\eps}$ for some small $\eps>0$, in which case the estimates obtained in Sections 3 and 4 of this paper will depend on the volume of the domain.
%\end{remark}
%\begin{remark}
%    We note that the method of \cref{thm2} can be extended for the case where the pole of the Green's function lies anywhere within the ball $B(0,r)$, instead of at the origin. In this case, the constant $K'(M,\lambda,r)$ will depend on the value of $r$, and we make a remark after the end of the proof of \cref{thm2}.
%\end{remark}

\begin{remark}
    We can consider potential terms with coefficients $\D$ that are bounded in magnitude by $M/(\delta(X))^{2}$, and in that case, we would need the kind of negativity condition:
\begin{align*}\label{nonpositive}
       \int_{\Omega} (\mathcal{D} v -\B\cdot \nabla v)\leq 0, \ \forall v\in C^{\infty}_{0}(\Omega),
\end{align*}
    (such as equation 8.8 of \cite{Gt} or equation 1.5 of \cite{Morg}) as in Section 4 of this paper, in particular for the additional term that would appear in \cref{eq12}. In presence of the $\D$ term, the key modification will be in \cref{diffineq}.  We leave the question of such potential terms, as well as applications of our technique for the landscape function, for future work. One is referred to \cite{An86,She94,She95,She99, Pog24} for related questions.

\begin{remark}
    As noted earlier, while for simplicity we have written the arguments for the specific drift considered in \cref{Laplacian}, the arguments of Section 3 and Section 4, can be naturally adopted for drifts that are majorized by a radial function integrable up to the boundary. In that case, as mentioned earlier, it is enough to consider that $|\B|^{2}$, $|\nabla\cdot\B|$  respectively belong to the general Morrey spaces $M^{\frac{n}{2-2\beta}}$ and $M^{\frac{n}{2-\beta}}$ considered in \cite{Ha24}.
\end{remark}    

%We also refer to \cite{Tao12} for consideration of the Neumann and Regularity problems for a singular potential term along with the Laplacian.
\end{remark}

Sections 3 and 4, contain the main results of this paper. The result of Section 3 is short and similar to the proof in \cite{GrWi} for the case of $|\B|=0$. We also note that for the case of $\beta=0$, we have a counterexample to the existence of solutions, and pointwise upper bounds on the Green's function fail to exist \cite{Pat24}. There also, the pole of the Green's function is at the origin.

We will also note that in \cref{lemma8,lemma9}, we see the distinction in the behavior of the Green's function when the gradient is small, in comparison to the case where the gradient is large. In this instance, it is actually easier to estimate the decay of the Green's function in case the gradient is small, as is done in the argument after the completion of \cref{lemma9}. In other contexts, this dichotomy of the behavior of solutions to elliptic equations where the gradient is large, versus where it is small, is well studied; see for example, \cite{Moo15}, and the references therein.

The basic novelties of the proof for the upper bound in Section 4 are in introducing a modified Lorentz norm when restricting to values of the Green's function within a ball containing the pole and separated from the boundary, and then to finally reduce the problem to looking at the level sets of the Green's function, using the spherical form of the Laplacian and the points at minimal and maximal distance of these level sets from the pole.

While we have worked specifically for the drift of \cref{Laplacian}, the method introduced here will be used in several directions in the future, as outlined in Section 5.

\section{Properties of the Green's function.}

We adopt the methods of Chapter 8 of \cite{Gt} .  The arguments of this section are standard.

Following the notation of \cite{Gt}, when $u$ is only weakly differentiable, then in a weak or generalized sense, $u$ is said to satisfy $L_1 u=0 (\geq 0, \leq 0 )$ in $\Omega$ if 
\begin{align}
    \mathcal{L}_1(u,v)=\int_{\Omega}\Big((\A_{ij} \partial_j u )(\partial_i v) -\B_i(\partial_i u) v \Big)dx=0 (\leq 0, \geq 0 ).
\end{align}

%Further, in a weak or generalized sense, $u$ is said to satisfy the adjoint equation $L_{1}^{T}u=0$ in $\Omega $ if 
%\begin{align}
%    \mathcal{L}_{1}^{T}(u,v)=\int_{\Omega}\Big((\A_{ij} \partial_j u ) -\B_i u)(\partial_i v)  \Big)dx=0.
%\end{align}

for all non-negative functions $v\in C^{1}_{0}(\Omega)$ which is the space of compactly supported functions whose first derivatives are continuous in $\Omega$.

Consider the $f,g^{i},\ i=1,2,\dots,n$ , locally integrable functions in $\Omega$. For our purposes of constructing the Green's function, it is enough to consider compactly supported $f,g^{i}$, $i=1,2,\dots,n$. Then a weakly differentiable function $u$ is called a weak or generalized solution of the inhomogeneous equation 
\begin{align}\label{eq122}
    L_1 u =f -\partial_i g^{i},
\end{align}
in $\Omega$ if 
\begin{align}\label{inhomo}
    \mathcal{L}_1 (u,v)=\int_{\Omega} (g^{i}\partial_i v +fv) dx=F(v),\ \ \forall v\in C^{1}_{0}(\Omega).
\end{align}

We say that a function $u$ is a solution of the generalized Dirichlet problem: $L_1 u =f +\partial_i g^{i}, u=\phi$ on $\partial\Omega$ if $u$ is a generalized solution of \cref{eq122}, $\phi\in W^{1,2}(\Omega)$ and $u-\phi \in W^{1,2}_{0}(\Omega)$.

With a modification of the argument of Theorem 8.1 of \cite{Gt}, we first outline a routine proof of the maximum principle, which will be used in the course of our proof.  

\begin{theorem}[Weak Maximum principle]\label{Maximum}
    Consider the operator $L_1$ in \cref{imp22}, and consider $u\in W^{1,2}(\Omega)$ so that $L_1 u\geq 0$ ($L_1 u\leq 0$). Then,
    \begin{align}
      \sup\limits_{\Omega}  u\leq \sup\limits_{\partial\Omega} u^{+}, (\inf\limits_{\Omega}  u\geq \inf\limits_{\partial\Omega} u^{+})
  \end{align}
\end{theorem}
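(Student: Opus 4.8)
The plan is to run the Stampacchia truncation argument behind Theorem~8.1 of \cite{Gt}, modified at exactly one point to accommodate the singular drift: since $\B$ lies in no $L^{p}$ with $p$ bounded away from $1$, in particular not in $L^{n}$, the lower-order term cannot be absorbed through Sobolev embedding as in \cite{Gt}, and I would instead exploit the negativity of $\nabla\cdot\B$ recorded in \cref{nonpositivee2} together with Hardy's inequality on $B(0,1)$. First I would observe that, since $L_{1}$ is linear and the hypothesis on $\B$ does not involve $u$, replacing $u$ by $-u$ interchanges the two cases, so it suffices to treat $L_{1}u\ge 0$, i.e.\ $\mathcal{L}_{1}(u,v)\le 0$ for every nonnegative $v\in C^{1}_{0}(\Omega)$, and to show $\sup_{\Omega}u\le\ell:=\sup_{\partial\Omega}u^{+}$, which is vacuous unless $\ell<\infty$.

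The test function is $v:=(u-\ell)^{+}$. As the trace of $u$ on $\partial\Omega$ is $\le\ell$ almost everywhere, the trace of $v$ vanishes, so $v\in W^{1,2}_{0}(\Omega)$ with $v\ge 0$, and $\nabla v=\nabla u$ on $\Gamma:=\{u>\ell\}$, $\nabla v=0$ off $\Gamma$. The only genuine issue is to justify inserting this (non-compactly supported) $v$ into the weak inequality. Here I would write the drift contribution, using $\nabla v=\nabla u$ on $\Gamma$, as
\[
\int_{\Omega}(\B\cdot\nabla u)\,v\;=\;\tfrac12\int_{\Omega}\B\cdot\nabla(v^{2})\;\ge\;0 ,
\]
the inequality coming from \cref{nonpositivee2} applied to $v^{2}\ge 0$ (equivalently, from $\nabla\cdot\B\le 0$) after a density argument legitimized by the bound $\int_{\Gamma}|\B|\,|\nabla u|\,v<\infty$; this last bound is precisely where $\delta^{\beta}|\B|\le M$ with $\beta<1$ enters, since it makes $\delta\,|\B|$ bounded on $B(0,1)$, and then Hardy's inequality $\int_{\Omega}v^{2}/\delta^{2}\lesssim\int_{\Omega}|\nabla v|^{2}<\infty$ gives $\int_{\Gamma}|\B|\,|\nabla u|\,v\lesssim\|\nabla u\|_{L^{2}(\Gamma)}\|v/\delta\|_{L^{2}(\Omega)}<\infty$. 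With this in hand I would approximate $v$ in $W^{1,2}_{0}(\Omega)$ by nonnegative functions in $C^{1}_{0}(\Omega)$ and, using boundedness of $\A$, pass to the limit to obtain $\mathcal{L}_{1}(u,v)\le 0$ for this $v$.

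Since the drift contribution carries the favourable sign it may be discarded, and the ellipticity bound of \cref{imp23} then gives
\[
\lambda\int_{\Gamma}|\nabla u|^{2}\;\le\;\int_{\Gamma}\A_{ij}\,\partial_{j}u\,\partial_{i}u\;\le\;0 .
\]
Hence $\nabla v=0$ a.e.; being constant and lying in $W^{1,2}_{0}(\Omega)$, $v$ must vanish, i.e.\ $u\le\ell$ a.e., which is the assertion; the companion inequality follows by applying this to $-u$. I expect the single modified step — showing that the singular drift still pairs continuously with $W^{1,2}_{0}(\Omega)$ and keeps its favourable sign — to be the only real obstacle, and it is exactly there that the standing hypotheses $\beta>0$ (the drift being strictly less singular than $\delta^{-1}$) and $\nabla\cdot\B\le 0$ replace the $L^{n}$ or $L^{\infty}$ assumptions on the lower-order coefficients used in \cite{Gt}; everything else is the classical truncation argument.
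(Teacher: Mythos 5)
Your route (global Stampacchia truncation with $v=(u-\ell)^+$, plus Hardy's inequality to make the singular drift pair with $W^{1,2}_0(\Omega)$) is genuinely different from the paper's, which never invokes \cref{nonpositivee2}: the paper localizes at an interior point $x_0$ where the supremum would be attained, uses that $|\B|$ is uniformly bounded on a ball $B(x_0,r)\subset\Omega$ with $r\le\delta(x_0)/2$, and runs the truncation argument of Theorem 8.1 of \cite{Gt} there with $(u-k)^+$, $k\uparrow\sup_\Omega u$. However, your proof as written breaks at its decisive step because of a sign error. Under the reduction you yourself adopt ($L_1u\ge0$ meaning $\mathcal{L}_1(u,v)\le0$ for nonnegative $v$, with $\mathcal{L}_1(u,v)=\int_\Omega(\A_{ij}\partial_ju\,\partial_iv-\B_i\partial_iu\,v)\,dx$), testing with $v=(u-\ell)^+$ gives
\[
\int_\Omega \A\nabla v\cdot\nabla v\,dx \;\le\; \int_\Omega (\B\cdot\nabla v)\,v\,dx \;=\;\tfrac12\int_\Omega\B\cdot\nabla(v^2)\,dx,
\]
and \cref{nonpositivee2} makes the right-hand side \emph{nonnegative}. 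A nonnegative term on the majorizing side of an inequality cannot be ``discarded''; to drop it you would need it to be $\le 0$, which under this orientation would require $\nabla\cdot\B\ge0$, the opposite of \cref{nonpositivee}. Hence the chain $\lambda\int|\nabla v|^2\le\int\A\nabla v\cdot\nabla v\le0$ does not follow, and the argument does not close.

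The favourable-sign mechanism you intend does exist, but only when the drift enters the tested identity with the opposite orientation: using the weak form that matches the operator as literally written in \cref{imp22}, namely $\int_\Omega(\A\nabla u\cdot\nabla v+(\B\cdot\nabla u)v)\,dx$, the corresponding subsolution case yields $\int\A\nabla v\cdot\nabla v\le-\tfrac12\int\B\cdot\nabla(v^2)=\tfrac12\int(\nabla\cdot\B)v^2\le0$, and your truncation argument then closes exactly as planned. The paper's conventions are admittedly inconsistent (the bilinear form displayed in Section 2 carries $-\B_i\partial_iu\,v$, which does not match \cref{imp22}), so you must fix one convention and verify which case ($L_1u\ge0$ or $L_1u\le0$) actually pairs with the bound $\sup_\Omega u\le\sup_{\partial\Omega}u^+$ under it; as it stands you have paired the hypothesis with the wrong sign for the term you discard. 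Your Hardy-inequality step (using $\delta|\B|\le M\delta^{\beta}\le M$ to show $\int|\B|\,|\nabla u|\,v<\infty$ and legitimize the non-compactly supported test function) is correct and worthwhile, but it addresses only integrability, not the sign. Note finally that the paper's local proof needs no sign condition on $\nabla\cdot\B$ at all, since there is no zero-order term and the drift is bounded away from $\partial\Omega$; once you repair the orientation, your global argument would in fact be the more complete of the two.
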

\begin{proof}[Outline of the proof:]
    The proof follows almost exactly as in Theorem 8.1 of \cite{Gt}; when $\B=0$ the proof is the same as the corresponding case in \cite{Gt}. In the case that $\B\neq 0$, note that it remains to consider the case where $\sup\limits_{\Omega} u$ is only attained in the interior of the domain $\Omega$, say at a point $x_0$ and thus considering $l=\sup\limits_{\partial\Omega} u^{+}$ and in the case that $l=\sup\limits_{\partial\Omega} u^{+}< \sup\limits_{\Omega} u$, we consider a $l\leq k<\sup\limits_{\Omega} u$ with $k\uparrow \sup\limits_{\Omega} u$. In that case the proof is completed by contradiction as in \cite{Gt} , by considering the compactly supported functions $(u-k)^{+}\in C^{1}_{0}(\Omega)$ , with a uniform upper bound on the magnitude of the $\B$ term in some ball $B(x_0,r)\subset \Omega$, where without loss of generality $r\leq \delta(x_0)/2$ . We omit the details.
\end{proof}
We immediately get from \cref{Maximum}, the following corollary, as Corollary 8.2 of \cite{Gt}.
\begin{corollary}
    Let $u\in W^{1,2}_{0}(\Omega)$ satisfy $L_1 u=0 $ in $\Omega$. Then $u=0$ in $\Omega$.
\end{corollary}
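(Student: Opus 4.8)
The plan is to deduce the corollary straight from the maximum principle \cref{Maximum}, in exact parallel with how Corollary 8.2 follows from Theorem 8.1 in \cite{Gt}. The only input beyond \cref{Maximum} is the classical fact that a function in $W^{1,2}_{0}(\Omega)$ has vanishing trace on $\partial\Omega$: the trace operator $W^{1,2}(\Omega)\to L^{2}(\partial\Omega)$ is bounded and annihilates $C^{\infty}_{0}(\Omega)$, hence annihilates its $W^{1,2}$-closure $W^{1,2}_{0}(\Omega)$. Since the trace commutes with taking positive parts, $u^{+}$ and $u^{-}$ also have vanishing trace, so that, reading the boundary suprema in \cref{Maximum} as essential suprema of traces, $\sup_{\partial\Omega}u^{+}=0$ and $\sup_{\partial\Omega}u^{-}=0$.

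With this observed, I would invoke \cref{Maximum} twice. As $L_1$ is linear, the hypothesis $L_1 u=0$ gives simultaneously $L_1 u\ge 0$ and $L_1(-u)\ge 0$, and both $u$ and $-u$ lie in $W^{1,2}_{0}(\Omega)\subset W^{1,2}(\Omega)$. Applying the first assertion of \cref{Maximum} to $u$ gives $\sup_{\Omega}u\le\sup_{\partial\Omega}u^{+}=0$, so $u\le 0$ a.e.\ in $\Omega$; applying it to $-u$ gives $\sup_{\Omega}(-u)\le\sup_{\partial\Omega}(-u)^{+}=\sup_{\partial\Omega}u^{-}=0$, so $u\ge 0$ a.e.\ in $\Omega$. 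Together these force $u=0$ a.e.\ in $\Omega$, which is the claim.

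I do not anticipate any genuine difficulty here: once \cref{Maximum} is granted, the argument is the textbook one. The only points worth a remark are, first, that \cref{Maximum} is stated for $u\in W^{1,2}(\Omega)$ with boundary data understood through traces and with the singular drift $\B$ merely locally bounded — but this is precisely the setting its proof is designed to handle, via localization to balls $B(x_0,r)\subset\Omega$ with $r\le\delta(x_0)/2$ on which $\B$ is bounded; and second, that the usual energy uniqueness proof (testing $\mathcal{L}_1(u,v)=0$ with $v=u$) is unavailable, since a general $u\in W^{1,2}_{0}(\Omega)$ need not be an admissible test function for the singular drift — this is the same failure of coercivity flagged in the introduction, and it is what makes the maximum-principle route the essential one.
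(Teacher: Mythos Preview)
Your proposal is correct and is exactly the approach the paper takes: the paper gives no separate proof but simply states that the corollary follows immediately from \cref{Maximum} ``as Corollary 8.2 of \cite{Gt}'', and your argument---applying \cref{Maximum} to both $u$ and $-u$ using the vanishing trace of $W^{1,2}_{0}$ functions---is precisely that textbook deduction.
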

We note that \cref{nonpositivee} , guarantees, with the use of the argument of Theorem 8.1 of \cite{Gt} as above, that the solution to the adjoint equation when it exists, satisfies the maximum principle as well. This will be used in the proof of \cref{thm3} as well.

%    The proof follows by combining the argument of Lemma 3.37 of \cite{Singdr} adopted to the setting of the domain with the ADR boundary, along with the proof of Theorem 8.3 of \cite{Gt}. Let the data $g,f^{i}$ with $i=1,\dots,n$ be given, with the boundary data $\phi\in W^{1,2}(\Omega)$. We use a sequence of mollifiers $h_j$  similar to \cite{Singdr}, but now in the case of the chord-arc domain with an ADR boundary, and truncate as some fixed distance from the boundary. For each fixed $j$, one gets an upper bound $\nu_j$ in the domain $\Omega$ for the drift term $\B$, and then applies Theorem 8.3, Corollary 8.7 of \cite{Gt} to get a unique solution $u_j$ that is bounded above by $C(||g||_2 +||f^{i}||_2) +||\phi||_{W^{1,2}(\Omega)}$ which is uniform in $j$. Using this fact, and a weak convergence argument, we produce a solution $u$ that corresponds to 
    
%\end{proof}
 
%Once the existence and uniqueness of the Dirichlet Green's function is established, with an argument identical to the one in the introduction of \cite{GrWi}, one shows that the Green's function is non-negative.

As in Section 4 of Chapter III of \cite{Singdr}, we assume that the Dirichlet problem is solvable for the operator $L_1$, and that the Green's function is also well defined when taking limiting functions for the Dirac delta distribution at the pole of the Green's function. We would conclude in the limit that the Green's function is positive.

For further consideration, for any point $y\in \Omega$, call $\Omega_\rho=\Omega\cap B_{\rho}(y)$ where $B_{\rho}(y)$ is the ball of radius $\rho$ centered at $y$. Also define,
\begin{align}\label{eq177}
    f_{\rho}(x,y)=|B_{\rho}(y)|^{-1}\textbf{1}_{\Omega_{\rho}}(x), \  x\in \Omega.
\end{align}

\section{Lower bounds on the Green's function.}
%We establish lower bounds on the Green's function for this general elliptic operator and upper bounds on the Green's function for an operator which is the Laplacian plus the drift term, given by:

The lower bound follows by an argument similar to that used in \cite{GrWi,Pat24}. \footnote{This was also outlined in the argument of Lemma 4.3 in Section III in \cite{Singdr}.}

We prove this here.

\begin{proof}[Proof of \cref{thm1}]
   % Once the existence of the Green's function is established for the domain with the operator $L$,(need to show this, need to ask this to Prof Hofmann), then the fact that the Green's function is non-negative follows by the argument of \cite{GrWi} (see page 5 of \cite{GrWi}).

    The proof essentially follows by extending the argument of the proof of Eq.(1.9) of \cite{GrWi}. Take $r:=|z-y|$. Consider a smooth cut-off function $\eta$ which is $1$ on $B_r(y)\setminus B_{r/2}(y)$ and zero outside $B_{3r/2}(y)\setminus B_{r/4}(y)$, and further $0\leq \eta\leq 1$ and $|\nabla \eta| \leq \frac{K}{r}$.

    Henceforth, we use the Einstein summation convention, where the summation sign is implied.

    Given the domain $\Omega$, for any admissible test function $\phi$, the Green's function satisfies the following adjoint equation;
    \begin{equation}\label{eqimpp}
        \int_{\Omega} \Big((\nabla \phi) \cdot\nabla \G(y,x) -\G(y,x)\B\cdot(\nabla \phi) \Big)dx =\phi(y)
    \end{equation}
    Here $\A_{ij}^{T}$ denotes the transpose of the matrix $\A_{ij}$. 
    
We note that for $\Omega$, with $D=\text{diam}(\Omega)<\infty$, we have for the fixed $\beta>0$, that,
\begin{align}
    \Big(\frac{1}{\delta(x)}\Big)^{1-\beta} \leq \frac{D^{\beta}}{\delta(x)}.
\end{align}

    Consider the test function $\phi= G(\cdot,y)\eta$, the bound on the drift term $\B$, the Cauchy inequality with $\epsilon$'s, and the bounds on the cut-off function $\eta$ introduced above, that,
   \begin{multline}
        \int\limits_{r/2<|x-y|<r} |\nabla \G(y,x)|^{2} dx \leq \Big(K_1 \frac{1}{r^{2}}\cdot \int\limits_{r/4<|x-y|<3r/2} \G(y,x)^{2}dx\Big) +\Big(K_2 \frac{1}{r\delta(y)}\cdot\int\limits_{r/4<|x-y|<3r/2} \G(y,x)^{2} dx\Big) \\ + \Big(\frac{K_2}{\delta(y)}\cdot \int\limits_{r/4<|x-y|<3r/2} \G(y,x)|\nabla \G(y,x)|dx\Big)        
    \end{multline}

    Noting that we have $r\leq \frac{1}{2}\delta(y)$, using the Cauchy inequality with $\eps$'s again for the last term on the right, and finally adjusting the terms, we get
       \begin{align}\label{eq6}
        \int\limits_{r/2<|x-y|<r} |\nabla \G(y,x)|^{2} dx \leq \tilde{K} \frac{1}{r^{2}}\cdot \Big(\int\limits_{r/4<|x-y|<3r/2} \G(y,x)^{2}dx\Big)\leq \tilde{K}r^{n-2}\big(  \sup\limits_{r/4\leq |x-y|\leq 3r/2} \G(y,x)^{2} \big) .
    \end{align}

    Again as in \cite{GrWi}, choose a similar cut-off function $\phi$ that is 1 on $B_{r/2}(y)$ and zero outside $B_{r}(y)$, and using it as the test function we get,
    \begin{multline}
     1= \int\limits_{r/2\leq |x-y|\leq r} (\A_{ij}\partial_{i}\G(y,x) \partial_{j} \phi + \G(y,x)\B_i \partial_i \phi) \Big) dx)\leq M \frac{K}{r} \int\limits_{r/2\leq |x-y|\leq r} |\nabla \G(y,x)(\cdot,y)|dx \\+\Big(\frac{MK}{r\delta(y)}\cdot\int\limits_{r/2\leq |x-y|\leq r} |\G(y,x)| dx)
    \end{multline}

%    Note that due to the continuity and the mean value property for $\G$, along with the lower bound on $|\G(x,y)|$ in the balls above, one argues that $\G$ is either positive or negative in the above balls. However, we 

    Using the identity of \cref{eq6}, and Cauchy's inequality for the first term on the right, along with a trivial volume bound, and finally Harnack's inequality,
\begin{align}
     1\leq K r^{n-2} \sup\limits_{r/4\leq |x-y|\leq 3r/2} |\G(y,x)|
     \leq K |z-y|^{n-2}|\G(y,z)|.
\end{align}
     \end{proof}

%Next we prove a Caccioppoli type inequality which is effectively already proven in \cref{eq6} in the first part of the above proof of \cref{thm1}. Later on, we would use this in key parts of the argument in conjunction with Cauchy-Schwarz inequality and note it as a separate theorem here.

%\begin{theorem}\label{thm2}
  %  Suppose $\G(y_i,\cdot)\in H^{1}(\Omega)$ satisfies  \cref{eqimpp} with $ \phi\in H^{1}_{0}(\Omega)$, then for any cut-off function $\eta\in C^{1}_{0}(\Omega)$ with $y_{i}\notin \text{supp}(\eta)$, we have, 
%    \begin{align}
%        \int_{\Omega} \eta^{2}|\nabla u|^{2}\leq C\int_{\Omega} |\nabla\eta|^{2}|u|^{2}
 %   \end{align}
   % where $C$ is a positive constant depending only on the ellipticity constants.
%\end{theorem}
%\begin{proof}
%The proof effectively follows from the argument for the first inequality of \cref{eq6} above, which was done for the special case of $|\nabla \eta|\leq K/r$ with $r\leq \frac{1}{2}\delta(y)$ for $y\in \Omega$. 
%\end{proof}

%In particular, we will later apply this theorem for cut-off functions that are compactly supported within the balls $B(y,1/2 \delta(y))$.
\section{Upper bounds on the Green's function.}
Here we prove \cref{thm2}.

We will prove this upper bound on the Green's function $\G(x,0)$ for any $x\in \Omega$ and with $|x|\leq \frac{1}{2}$. In doing so, we initially adopt the argument of Proposition 5.10 of \cite{KimSa} and introduce a modified Lorentz norm for our purpose. 

%\footnote{Instead of the distributional definition of a Green's function, \cite{KimSa} in a bounded domain also introduces a Schroedinger type term like $d(x)G(x,y)$ and this is not expected to be integrable, and thus instead of the distributional definition that we use here, they use a limiting argument with approximating identities for the Diract delta distribution.} 
%\begin{align}
%_{k}:= \frac{1}{|B_{1/k}(x)|} 1_{B_{1/k}(x)}.
%\end{align}

\begin{proof}[Proof of \cref{thm2}] For any $\rho$, consider as in \cite{GrWi} and in the case of Proposition 5.10 of \cite{KimSa} the test function given by 
\begin{align}
    \phi(x)=\Big(\frac{1}{s}-\frac{1}{\G_{\rho}(x,0)}\Big)^{+}, \ \ \text{and} \ \ \Omega_{s}:=\{x\in \Omega: \G_{\rho}(x,0)\geq s \}.
\end{align}

Here we use the standard notation that $h(x)^{+}:=\text{max}(h(x),0)$. 

Using $\phi$ as a test function, we get, with only the drift term, that,
\begin{align}\label{eq12}
    \int_{\Omega_{s}} \nabla\G_{\rho}(x,0)\cdot \nabla \phi dx\leq \frac{1}{s}-\int_{\Omega_{s}}\B\cdot (\nabla \G_{\rho}(x,0)) \phi dx.
\end{align}

We have that $\nabla \phi=\nabla \G_{\rho}(\cdot,0)/\G_{\rho}(\cdot,0)^{2}$ where the derivative is understood to be with respect to the $x$-variable.

Using \cref{nonpositivee2}, using a standard integral by parts, we get,
\begin{align}
    \int_{\Omega_{s}}\nabla\G_{\rho}(x,0)\cdot \frac{\nabla \G_{\rho}(x,0)}{\G_{\rho}(x,0)^{2}}dx \leq \frac{1}{s}+ \int_{\Omega_{s}}\B\cdot \Big(\frac{ \nabla \G_{\rho}(x,0)}{\G_{\rho}(x,0)}\Big)dx .
\end{align}

For the given fixed $y$, set $w_{s,\rho} (x)=(\ln(\G_{\rho}(x,0)/s))^{+}$. In this case, $\nabla w_{s,\rho}(x)= \nabla \G_{\rho}(x,y)/\G_{\rho}(x,y)$ in the set $\Omega_{s}$. Then using the ellipticity condition on $\A$ and the triangle inequality and Cauchy inequality, we have 
\begin{align}
    \lambda\int_{\Omega_s} |\nabla w_{s,\rho}|^{2} dx\leq \frac{1}{s}+\int_{\Omega_{s}}|\B||\nabla w_{s,\rho}|dx\leq \frac{1}{s} +\frac{\lambda}{2}\int_{\Omega_s} |\nabla w_{s,\rho}|^{2} +\frac{1}{2\lambda}\int_{\Omega_s}|\B|^{2}dx,
\end{align}
and thus we have 
\begin{align}
    \frac{\lambda}{2} \int_{\Omega_s} |\nabla w_{s,\rho}|^{2} dx\leq \frac{1}{s}+\frac{1}{2\lambda}\int_{\Omega_s}|\B|^{2}dx.
\end{align}

Next we use the Sobolev inequality, and Holder's inequality on the last term on the right, and noting $\Omega_{2s}\subset \Omega_{s}$ we have,  
\begin{align}
    \frac{\lambda}{2}(\ln 2)^{2}|\Omega_{2s}|^{\frac{n-2}{n}}\leq \frac{\lambda}{2}\Big( \int_{\Omega_s} |w_{s,\rho}|^{\frac{2n}{n-2}} dx\Big)^{\frac{n-2}{n}} \leq C_{n}^{2}\frac{\lambda}{2}\Big( \int_{\Omega_s} |\nabla w_{s,\rho}|^{2} dx\Big) 
    \\ \leq \frac{C_{n}^{2}}{s}+\frac{C_{n}^{2}}{2\lambda}\Big(\int_{\Omega_s} |\B|^{n} dx\Big)^{\frac{2}{n}} |\Omega_s|^{\frac{n-2}{n}}.
\end{align}

Write $f(s)=|\Omega_{s}|^{\frac{n-2}{n}}$.
Thus, we have constants $K_1, K_2>0$ so that 
\begin{align}\label{eq16}
    f(2s)\leq \frac{K_1}{s} +K_2\Big(\int_{\Omega_s} |\B|^{n} dx\Big)^{\frac{2}{n}} f(s).
\end{align}
We make a digression to state that all the bounds we obtain in the end of this section, will be independent of $\rho$, and without loss of generality, we can henceforth work with the Green's function $\G(\cdot,0)$ in place of $\G_{\rho}(\cdot,0)$. 

Now consider the following two cases:
\begin{itemize}
    \item[(i)] There exists a large enough absolute constant $C$ so that for any configuration of the drift in the ball $B(0,1)$, we have $f(2s)\geq \frac{1}{C}f(s)$ for all $ s\geq \widetilde{s}:=\max\limits_{x\in S(0,\frac{1}{2})}\G(x,0) $.
    \item[(ii)] The above fails, and so for all positive integers $N$ large enough, we have some drift configuration $|\B_N|$ for which we have $f(s_N)>N f(2s_N)$ for some $s_N\geq \widetilde{s_N}$.  Here we define, $\widetilde{s_N}:=\max\limits_{x\in S(0,\frac{1}{2})}\G_N(x,0) $, where $\G_N(x,0)$ is the Green's function corresponding to the operator with the drift $|\B_N|$. We deal with this case later, in a manner similar to the argument used in the later part of the argument for Case $(i)$.
\end{itemize}
\bigskip

\begin{enumerate}

\item[Case(i).] We deal with Case(i) above. Consider a fixed positive real number $K\leq 1/(2C)$. Now consider a real number $L\geq 2$ large enough so that upon defining $s^{*}:=\max\limits_{x\in S(0,1/L)}\G(x,0)$, for $s\geq s^{*}$, noting the bound $|\B|\lesssim \frac{M}{\delta_{\Omega}(y)}$ in $\Omega_{s}$, and using the maximum principle and noting that $\Omega_{s}\subset \overline{B_{0,L}}:=\overline{B(0,1/L)}$ for $s\geq s^{*}$, using a trivial volume bound on the integral on the right of \cref{eq16}, we get for all $s\geq s^{*}$,
\begin{multline}\label{eq171}
    f(2s)\leq \frac{K_1}{s} +K_2\Big(\int_{B(0,1/L)} |\B|^{n} dx\Big)^{\frac{2}{n}} f_{y_0}(s) \leq  \frac{K_1}{s} +K_2\Big(\int_{B(0,1/L)} \Big|\frac{M}{\delta_{\Omega}(y)}\Big|^{n} dx\Big)^{\frac{2}{n}} f_{y_0}(s) \\ \leq  \frac{K_1}{s} +K f_{y_0}(s). 
\end{multline}

We have required that $L$ be large enough so that $K_2 c(\frac{M}{L})^{2}\leq  K$ with the constant volume prefactor of $c$, so that the last inequality in \cref{eq171} holds.

Thus we have for all $s\geq s^{*}$,
\begin{align*}
     f(2s)-KCf(2s) \leq f(2s)-Kf(s)\leq \frac{K_1}{s}.
\end{align*}

Since we have $KC\leq 1/2$, we get,
\begin{align*}
   \frac{1}{2}f(2s) \leq (1-KC)f(2s)\leq \frac{K_1}{s},
\end{align*}

and so for all $\lambda\geq 2s^{*}$, we have 
\begin{align}\label{eq1888}
    f(\lambda)=|\{x:\G(x)\geq \lambda\}|^{\frac{n-2}{n}}\leq \frac{K_3}{\lambda}.
\end{align}

Consider any $p> 1$\footnote{Later we will choose $p=\frac{n}{n-2}$.}. We define a modified Lorentz-norm type functional for the Green's function, given by 
\begin{align}\label{ModifiedLorentz}
    ||\G(\cdot, 0)||^{*}_{p,\infty}:=\sup\limits_{\lambda\geq  2s^{*}} \lambda |\Omega_{\lambda}|^{\frac{1}{p}}=\sup\limits_{\lambda \geq  2s^{*}} \lambda|\{x:\G(x)\geq \lambda\}|^{\frac{1}{p}}.
\end{align}

Consider any $p>\alpha>1$. Modifying a standard argument found for example in the proof of Theorem 1.2.10 of \cite{Grafakos}, for any subset $E\subset \Omega_t$ with $t\geq 2s^{*}$, we get,
\begin{align}\label{eq20}
    \int\limits_{E} \G(x,0)^{\alpha}dx =\int_{0}^{\infty} |\{x:\G(x,0)\geq \lambda^{\frac{1}{\alpha}}\}\cap E|d\lambda
    \leq \int_{0}^{\infty} \min\Big( |E|,\{x:\G(x,0)\geq \lambda^{\frac{1}{\alpha}}\}\Big)d\lambda 
\end{align}

Consider the value $\lambda_{E}$, defined as the unique value for which $|E|=|\{x:\G(x,0)\geq \lambda_{E}^{\frac{1}{\alpha}}\}|$. In the last integral on the right of \cref{eq20}, we have 
\begin{align}\label{eq21}
   \int_{0}^{\infty} \min\Big( |E|,|\{x:\G(x,0)\geq \lambda^{\frac{1}{\alpha}}\}|\Big)d\lambda = \int_{0}^{\lambda_E} |E|d\lambda +\int_{\lambda_E}^{\infty}| \{ x:\G(x,0)\geq \lambda^{\frac{1}{\alpha}} \} |d\lambda
\end{align}

We consider the set $S_{B_{0,L}}$ of all measurable subsets $E\subset B(0,1/L)$ so that $\lambda_E\geq (2s^{*})^{\alpha}$ and $0\notin E$. From the above consideration and the monotonic behavior of this distribution function of $\G(x,0)$, the set $S_{B_{0,L}} $ consists of all subsets $E\subset B(0,1/L)$ so that $|E|=|\{x:\G(x,0)\geq \lambda_{E}^{\frac{1}{\alpha}}\}|\leq |\{x:\G(x,0)\geq 2s^{*}\}|$:
\begin{align}\label{defineset}
    S_{B_{0,L}}=\{E\subset B(0,1/L), 0\notin E: |E|\leq |\{x:\G(x,0)\geq 2s^{*}\}|\}.
\end{align}

In this case, for any $E\subset S_{B_{0,L}}$, for which $\lambda_{E}^{\frac{1}{\alpha}} \geq (2s^{*})$, we have
\begin{align}\label{eq22}
    |E|=|\{x:\G(x,0)\geq \lambda_{E}^{\frac{1}{\alpha}}\}|\leq \frac{(||\G(\cdot, 0)||^{*}_{p,\infty})^{p}}{(\lambda_{E})^{\frac{p}{\alpha}}},
\end{align}
by the definition of \cref{ModifiedLorentz}.

Since the function $\lambda\to (||\G(\cdot, 0)||^{*}_{p,\infty})^{p}/\lambda^{p/\alpha}$ is decreasing, by \cref{eq22}, the value $\lambda^{'}_{E}$ that satisfies 
\begin{align}\label{eq23}
    |E|=\frac{(||\G(\cdot, 0)||^{*}_{p,\infty})^{p}}{(\lambda^{'}_{E})^{\frac{p}{\alpha}}}
\end{align}
is such that $(\lambda^{'}_{E})^{\frac{1}{\alpha}}\geq (\lambda_{E})^{\frac{1}{\alpha}}\geq (2s^{*})$.

Note that 
\begin{align}
    (\lambda^{'}_{E})=\frac{||\G(\cdot,0)||^{*}_{p,\infty})^{\alpha}}{|E|^{\frac{\alpha}{p}}}.
\end{align}

Thus for each $E\subset 
 S_{B_{0,L}}$ we can write 
\begin{multline}
    \int_{0}^{\infty} \min\Big( |E|,|\{x:\G(x,0)\geq \lambda^{\frac{1}{\alpha}}\}|\Big)d\lambda \leq \int_{0}^{\infty} \min\Big( |E|,\frac{(||\G(\cdot, 0)||^{*}_{p,\infty})^{p}}{\lambda^{\frac{p}{\alpha}}}\Big)d\lambda \\ =   \int_{0}^{\lambda^{'}_E} |E|d\lambda +\int_{\lambda^{'}_E}^{\infty} \frac{(||\G(\cdot, 0)||^{*}_{p,\infty})^{p}}{\lambda^{\frac{p}{\alpha}}}d\lambda.
\end{multline}

Now from the value of $\lambda^{'}_{E}$ given in \cref{eq23}, we get upon performing the integrations above \footnote{Such an expression appears in the proof of Theorem 1.2.10 of \cite{Grafakos}}, 
\begin{multline}\label{eq26}
    \int\limits_{E} \G(x,0)^{\alpha}dx =\int_{0}^{\infty} |\{x:\G(x,0)\geq \lambda^{\frac{1}{\alpha}}\}\cap E|d\lambda \leq\int\limits_{0}^{\infty}\min\Big( |E|,|\{x:\G(x,0)\geq \lambda^\frac{1}{\alpha}{}\}|\Big)d\lambda\\ \leq \frac{p}{p-\alpha}|E|^{(1-\frac{\alpha}{p})}(||\G(\cdot, 0)||^{*}_{p,\infty})^{\alpha}.
\end{multline}

From \cref{eq1888,ModifiedLorentz}, we have 
\begin{align}
    ||\G(\cdot, 0)||^{*}_{\frac{n}{n-2},\infty}\leq K_3.
\end{align}

We thus have for all $E\subset S_{B_0,L}$, with $1<\alpha<p=n/(n-2)$,
\begin{align}
    \Big(\frac{1}{|E|}\int\limits_{E} \G(x,0)^{\alpha}dx\Big)^{\frac{1}{\alpha}}\leq \frac{n}{n-\alpha(n-2)}|E|^{\frac{2-n}{n}}K_3.
\end{align}

%Note that if we consider for some $\alpha>1$, the integrand $\G(x,y_0)^{\alpha}$ in place of $\G(x,y_0)$, then we have  

For the special case of $E=B(y,R)$ being a ball of some radius $R$, Moser's inequality, (see for example Theorem 8.17 of \cite{Gt}, we get that \footnote{It is to use this inequality with the drift term that we need an exponent on $\G$ of $\alpha>1$. }  
\begin{align}\label{eq31}
    \sup\limits_{x\in E/2} \G(,0)\leq C' \Big(\frac{1}{|E|}\int\limits_{E} \G(x,0)^{\alpha}dx\Big)^{\frac{1}{\alpha}}\leq \frac{C'n}{n-\alpha(n-2)}|E|^{\frac{2-n}{n}}K_3,
\end{align}
 for some constant $ C'(n,\lambda,M,R)$ and in our domains of the form $B(0, 1/L)$ in consideration, along with the drift term,  $R$ is bounded from above. Here $\frac{E}{2}$ denotes the ball with half the radius of $E$ but with the same center.

Now in particular in $S_{B_{0,L}}$, we choose a ball $E$ with center $y\in B_{0,L}$, and with radius $r_y :=|y|/4$. Then from \cref{eq31} we get in particular,
\begin{align}\label{eq32}
    \G(y,0)\leq \frac{C_1}{|y|^{n-2}}.
\end{align}

Define the subset of balls of the form $S'_{B_{y_0},L}=\{B(y,|y|/4)\subset S_{B_{0},L} \}$. Each ball in $S'_{B_{0},L}$ has volume $C_2(|y|)^{n}$ for some uniform constant $C_2$.

We note from \cref{defineset} that the volume of sets in $S_{B_{0,L}}$ are upper bounded by $V(0):=|\{x:\G(x,0)\geq 2s^{*}\}|$. Thus, there is an upper bound on the distance from the origin to the centers of the balls in $S'_{B_{0},L}$, depending on the value of $V(0)$. In particular, there is a value $r_{0}$ dependent on $V(0)$ so that for all $z\in \mathbb{B}_{0}:=B(0,r_{0})$, the balls $B(z,r_z)=B(z,|z|/4)\subset S'_{B_{0},L}$ and also $B(z,r_z)\subset B(0,1/L)$. Here we have defined $r_z =|z|/4$.

\begin{figure}[h]
\centering
\includegraphics[width=0.5\textwidth]{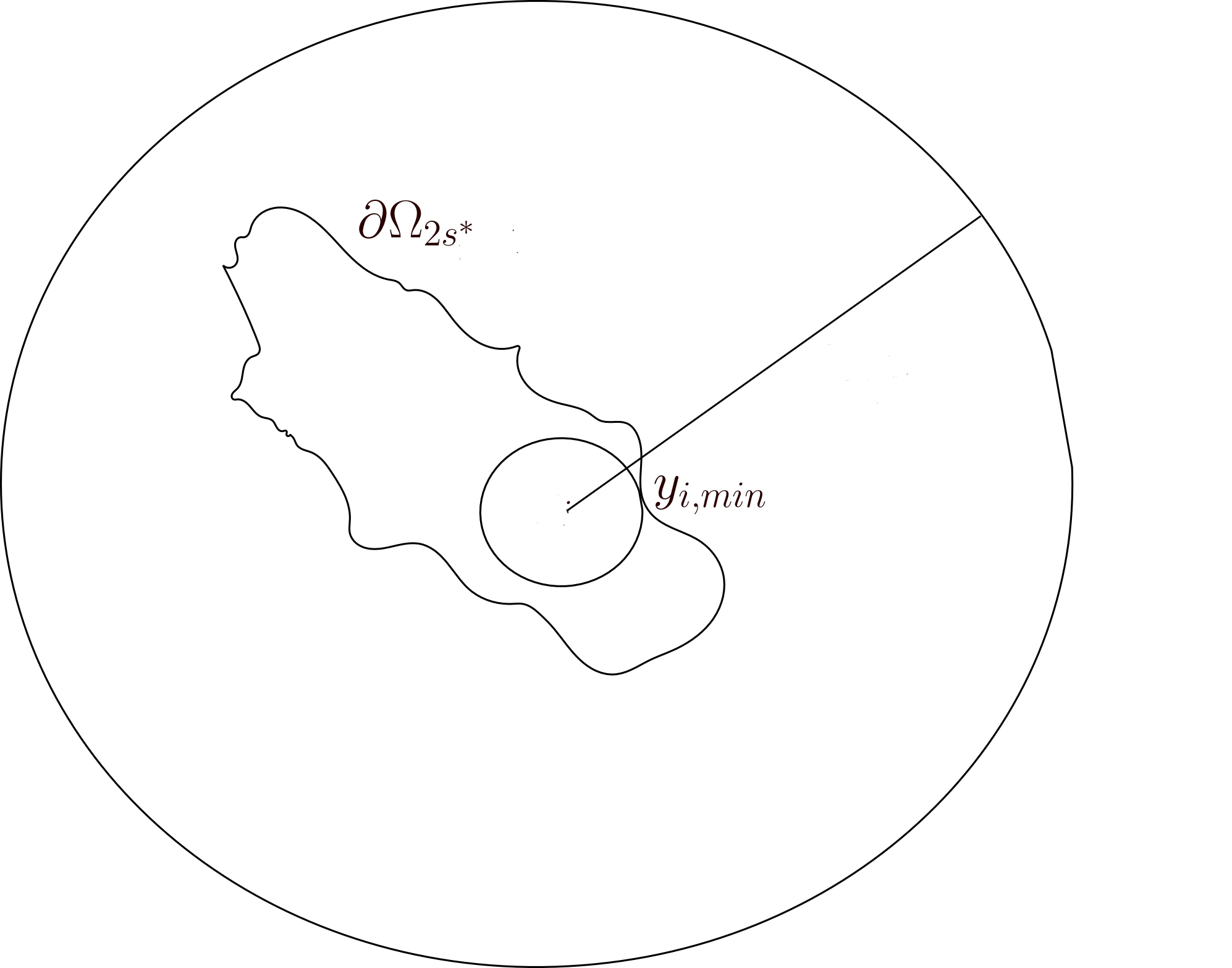}
%\huge\input{drawing.pdf_tex}
\caption{The boundary of the set $\Omega_{2s^{*}}$ , the sphere of radius $1/L$ centered at $0$, the point on the level set $\partial\Omega_{2s^{*}}$ at the closest distance from $0$, are shown in the figure.}
\label{fig:inksone3}
\end{figure}

\bigskip

We next show that there exists some constant $\eta>0$ so that for any configuration of the drift, we have
\begin{align}\label{imp}
    \frac{V(0)}{|B(0,1/L)|}=\frac{|\{x:\G(x,0)\geq 2s^{*}\}|}{|B(0,1/L)|}\geq \eta.
\end{align}

This clearly means that for any configuration of the drift, we have
\begin{align}
    \frac{\mathbb{B}_{0}}{|B(0,1/L)|}\geq \eta.
\end{align}

Thus the estimate of \cref{eq32} clearly holds for all $z\in \mathbb{B}_0$, and with a routine Harnack inequality, one extends the same inequality with a modified constant to the entire ball $ B(0,1/L)$.

\bigskip

The proof of \cref{imp} and the proof of Case(ii) are similar in nature. In particular, in proving \cref{imp} and thus Case(i), henceforth we no longer have to use the fact that there is a constant $C$ so that $f(2s)\geq \frac{1}{C}f(s)$  for all $s\geq \tilde{s}$ ,and the arguments henceforth can also be employed for Case(ii).  We argue by contradiction; that if there does not exist the claimed $\eta>0$, then there exists a sequence of drifts $|\B_i|$ satisfying \cref{Laplacian} and \cref{nonpositivee} for which the ratio of \cref{imp} goes to zero. In particular, if we write the Green's function corresponding to the drift $|\B_i|$ as $\G_i(\cdot,0)$, and $s^{*}_i:=\max\limits_{x\in S(0,1/L)}\G_i(x,0)$ , we have as $i\to \infty$, 
\begin{align}
    \frac{|\{x:\G_i(x,0)\geq 2s^{*}_i \}|}{|B(0,1/L)|}\to 0.
\end{align}

In particular, the point $y_{i,\text{min}}$ at the minimum distance from the origin to the level set of $|\{x:\G_i(x,0)= 2s^{*}_i \}|$ is so that $y_{i,\text{min}}\to 0$ as $i\to \infty$.

We will use the result of \cref{thm1} for our argument as well. 

\bigskip

We first establish the following lemma which will be used later on. Hereafter we write $\G(x,0)=\G(x)$.
\begin{lemma}\label{lemma}There is a uniform constant $M_0$ so that for any configuration of the drift  that satisfies \cref{Laplacian} and \cref{nonpositivee}, we have a sequence $b_{k}|_{k=1}^\infty$ of maximum points, and a sequence $a_{k}|_{k=1}^\infty$ of minimum points, so  that, 
\begin{align}\label{eq89}
    \Big|\frac{\partial\G}{\partial r}\Big|_{b_{k}} b_{k}^{n-1}< M_0 \Big|\frac{\partial\G}{\partial r}\Big|_{a_{k}} a_{k}^{n-1},
\end{align}
and $a_{k}\to 0$, and $b_{k}\to 0$ as $k\to \infty$. In the above equation, it is understood that the pole of the Green's function is at $0$.\end{lemma}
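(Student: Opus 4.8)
The plan is to choose $a_k,b_k$ so that the inequality becomes a consequence of a rescaling (blow‑up) analysis at the pole. Fix any sequence of radii $\rho_k\downarrow 0$ and let $a_k\in S(0,\rho_k)$ be a point where $\G(\cdot,0)$ attains its minimum on the sphere $S(0,\rho_k)$ and $b_k\in S(0,\rho_k)$ a point where it attains its maximum. By the strong maximum principle for $\G(\cdot,0)$ on $\overline{B(0,\rho_k)}\setminus\{0\}$ (where $\G\to\infty$ at $0$), $\G(\cdot,0)>\G(a_k,0)$ strictly inside $B(0,\rho_k)$, so $a_k$ is the point of the level set $\{\G(\cdot,0)=\G(a_k,0)\}$ at minimal distance from $0$; applying the maximum principle on $\{|x|\ge\rho_k\}\cap\Omega$ (where $\G$ vanishes on $\partial\Omega$) shows likewise that $b_k$ is the point of $\{\G(\cdot,0)=\G(b_k,0)\}$ at maximal distance from $0$. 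Thus $a_k$ is a minimum point and $b_k$ a maximum point in the sense of the statement, and $a_k,b_k\to0$. Since $a_k$ minimises and $b_k$ maximises $\G$ on the sphere, the tangential gradient vanishes at both, so $\nabla\G(a_k,0)$ and $\nabla\G(b_k,0)$ are purely radial; and as $|a_k|=|b_k|=\rho_k$, the inequality to be proved reduces to $|\partial_r\G(b_k,0)|<M_0\,|\partial_r\G(a_k,0)|$.

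The key point is the scaling behaviour of the drift. Put $\tilde\G_k(y)=\rho_k^{\,n-2}\,\G(\rho_k y,0)$ on the dilated ball $B(0,1/\rho_k)$; then $\tilde\G_k$ is the Dirichlet Green's function, with pole at $0$, of the operator $-\Delta+\tilde\B_k\cdot\nabla$ with $\tilde\B_k(y)=\rho_k\,\B(\rho_k y)$. Since $\delta(\rho_k y)=1-\rho_k|y|\ge\tfrac12$ for $|y|\le 1/(2\rho_k)$, the bound $\delta^{1-\beta}|\B|\le M$ gives $|\tilde\B_k(y)|\le 2^{1-\beta}M\rho_k$ on every fixed annulus once $k$ is large. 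Hence after dilation by $\rho_k$ the drift is negligible: on $\{\tfrac12\le|y|\le2\}$ the $\tilde\G_k$ solve a uniformly elliptic equation whose drift tends to $0$ in $C^{1,\alpha}$ (differentiation brings further powers of $\rho_k$), carrying a unit pole at the origin. This is exactly where $\beta>0$ is used — for $\beta=0$ the drift is scale invariant and does not disappear, consistent with the failure of the estimates noted in the introduction for that case.

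By \cref{thm1} with pole at $0$ (note $|a_k|,|b_k|\le\tfrac12$), $\G(x,0)\ge K|x|^{2-n}$ for $|x|\le\tfrac12$, so $\tilde\G_k\ge K|y|^{2-n}$ there. Since $\B$ is bounded near the pole, one also has the crude a priori bound $\G(x,0)\le C|x|^{2-n}$ for $|x|\le\tfrac12$; equivalently $\rho^{\,n-1}|\overline{\G}'(\rho)|$ remains bounded as $\rho\downarrow0$, which follows from the spherical‑mean identity $(\rho^{\,n-1}\overline{\G}'(\rho))'=\rho^{\,n-1}\,\overline{\B\cdot\nabla\G}(\rho)$ together with $\overline{\G}(1)=0$ and $\G\in C^{3,\alpha}_{\mathrm{loc}}$ off the pole, $\overline{\G}$ being the spherical average. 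Hence $\tilde\G_k(y)\le C|y|^{2-n}$ on each fixed compact set, uniformly in $k$. By Harnack's inequality the $\tilde\G_k$ are then uniformly bounded above and below, and uniformly Hölder (Schauder), on $\{\tfrac12\le|y|\le2\}$; passing to a subsequence, $\tilde\G_k\to u$ in $C^1$ on that annulus, with $u$ a positive harmonic function on $\R^n\setminus\{0\}$ — the drift term having dropped out — for which the flux $\int_{S(0,R)}\partial_r u=-1$ is preserved for every $R>0$, so $-\Delta u=\delta_0$. By B{\^o}cher's theorem and the Liouville theorem $u=\Phi+c$ with $c\ge0$, where $\Phi(y)=((n-2)\omega_{n-1})^{-1}|y|^{2-n}$ and $\omega_{n-1}=|\mathbb{S}^{n-1}|$; the bound $u\le C|y|^{2-n}\to0$ at infinity forces $c=0$, so $u=\Phi$, and the limit is therefore independent of the subsequence.

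Finally, since $a_k/\rho_k,\ b_k/\rho_k\in S^{n-1}$ and $\nabla\G$ is radial at $a_k,b_k$, scaling gives $\rho_k^{\,n-1}|\partial_r\G(a_k,0)|=|\partial_r\tilde\G_k(a_k/\rho_k)|$ and $\rho_k^{\,n-1}|\partial_r\G(b_k,0)|=|\partial_r\tilde\G_k(b_k/\rho_k)|$, and by the $C^1$ convergence both tend to $\omega_{n-1}^{-1}$, the constant value of $|\partial_r\Phi|$ on $S^{n-1}$. Hence $|\partial_r\G(b_k,0)|/|\partial_r\G(a_k,0)|\to1$, so for all large $k$ this ratio is $<2$; discarding the initial terms yields the lemma with $M_0=2$ — a constant independent of the drift configuration, since the limit equals $1$ for every admissible drift. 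The step demanding the most care is the identification of the limit $u$ with $\Phi$, i.e. ruling out degeneration of the $\tilde\G_k$: this relies on the crude a priori upper bound for $\G$ near the pole (for which the spherical‑mean flux identity, or standard local Green's‑function theory for the locally bounded drift, must be invoked) matched with the lower bound from \cref{thm1}; one must also check that the radial normal direction and the extremal character of $a_k,b_k$ on the spheres pass to the limit, which is immediate because $\partial_r\Phi$ is rotation invariant.
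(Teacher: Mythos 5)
Your proposal is essentially correct, but it reaches the lemma by a genuinely different route than the paper. The paper argues by contradiction: assuming the ratio in \cref{eq89} can be made arbitrarily large, it runs a discrete radial iteration through minimum and maximum points of successive level sets, compares the accumulated change of $\G$ along the two families by Taylor expansion together with the lower bound of \cref{thm1}, and derives a violation of Harnack's inequality; no information about the precise asymptotics at the pole is extracted. You instead blow up at the pole: since $|\B|\le 2^{1-\beta}M$ on $B(0,1/2)$ and the rescaled drift $\rho_k\B(\rho_k\,\cdot)$ vanishes in the limit (exactly where $\beta>0$ is used), the rescalings $\rho_k^{\,n-2}\G(\rho_k\,\cdot,0)$ converge to the fundamental solution $\Phi$, so both $|b_k|^{n-1}\big|\frac{\partial\G}{\partial r}\big|_{b_k}$ and $|a_k|^{n-1}\big|\frac{\partial\G}{\partial r}\big|_{a_k}$ tend to $\omega_{n-1}^{-1}$ and their ratio tends to $1$; this yields the lemma with any $M_0>1$, which is sharper than what the paper's argument gives, and your maximum-principle identification of sphere-extremal points with level-set extremal points correctly matches the paper's notion of minimum/maximum points, while the sequences (and hence the starting index) are allowed to depend on the drift, so uniformity of $M_0$ is not an issue. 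What the paper's route buys is that it needs no a priori upper bound near the pole (only \cref{thm1} and Harnack) and its iteration machinery is reused later in the proof of \cref{thm2}; what your route buys is brevity, the sharp constant, and a clean isolation of why the case $\beta=0$ is different.

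One step needs a better justification: the crude bound $\G(x,0)\le C|x|^{2-n}$ for $|x|\le\frac12$. The spherical-mean identity you invoke does not close by itself, since $(\rho^{n-1}\overline{\G}'(\rho))'$ is driven by the spherical average of $\B\cdot\nabla\G$, which involves the full (in particular tangential) gradient and is not controlled at that stage; as written this is circular. Your fallback is the right one and should be made the argument: on $B(0,1/2)$ the drift is bounded, so the Green's function $\G_{ball}$ of $B(0,1/2)$ satisfies $\G_{ball}(x,0)\le C|x|^{2-n}$ by \cite{KimSa}, and $\G-\G_{ball}$ is an $L$-solution in the ball bounded by $\sup_{S(0,1/2)}\G(\cdot,0)$ via the maximum principle (the same device the paper uses in the proof of \cref{thm3}, and finite for each fixed drift by the assumed regularity away from the pole). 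The resulting constant depends on the drift, which is harmless: only the blow-up limit, not the constant, must be drift-independent, and no circularity with \cref{thm2} arises. You should also record the gradient bound $|\nabla\tilde\G_k(y)|\lesssim |y|^{1-n}$ (dyadic annuli plus interior estimates, using the crude bound) needed to show that the drift contribution $\int_{B(0,R)}\tilde\B_k\cdot\nabla\tilde\G_k$ vanishes, so that the unit flux indeed passes to the limit.
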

We need to be careful below about defining the sequence of $a_{k}|_{k=1}^{\infty}$, since we do not have a-priori information about the structure of the level sets of the Green's function. 

\begin{proof}[Proof of Lemma 5]Assume to the contrary, that for any arbitrarily large $M(\geq 1)$, there exists some  $i(M)$ and drift $|\B_i|$ satisfying \cref{Laplacian} and \cref{nonpositivee} so that, we have, eventually for any sequence of maximum points $b_{i,k}|_{k=1}^{\infty}$ and any sequence of minimum points $a_{i,k}|_{k=1}^{\infty}$, each sequence converging to $0$, 
\begin{align}
    I=\inf\limits_{k}\Big(\Big|\frac{\partial\G_i}{\partial r}\Big|_{b_{i,k}} b_{i,k}^{n-1}\Big) \geq M \sup\limits_{k}\Big(\Big|\frac{\partial\G_i}{\partial r}\Big|_{a_{i,k}} a_{i,k}^{n-1}\Big):=M\cdot S,
\end{align}
Fix some radius $r_0>0$. Without loss of generality, choose the maximum point $b_{i,k_0}=r_0$ and the minimum point $a_{i,k_0}$ to belong to the level set $\G(b_{i,k_0})$, for some $k_0 \geq 1$.  We suppress the dependence of $i(M)$ on $M$ and the dependence of $\G_i$ on $i$, below. 

Initially we choose the increment $h_0$ so that, 
\begin{align}\label{eq91}
    h_{0} \cdot \sup\limits_{\B(0, b_{i,k_0})\setminus B(0,\frac{1}{2} a_{i,k_0 })}|\G^{''}_i (x)|\ll  \frac{S}{(b_{i,k_0 })^{n-1}}\leq \frac{I}{(b_{i,k_0 })^{n-1}}, \ h_0\ll \frac{1}{2}a_{i,k_0 }.
\end{align}
In this case, define the two sequences in the following way: first consider the point $a_{i,(k_0+1)}$ which is the minimum point of the level set $\{x :\G(x)=\G(a_{i,k_0 }-h_0)\}$. By definition, we have, $|(a_{i,k_0 }-h_0)|\geq |a_{i,k_0 +1 }|$.  We continue this process, till we reach some $t_1\geq 1$ so that $a_{i,k_0 +t_1 -1}\geq \frac{1}{2}a_{i,k_0}$ with possibly $h'_0 =|a_{i, k_0 +t_1 -1}  -\frac{1}{2}a_{i,k_0}|\leq h_0$, while at this last stage we define $a_{i, k_0 +t_1}$ as the minimum point of the level set $\G(a_{i,k_0 +t_1 -1}-h'_0)=\G(\frac{1}{2}a_{i,k_0 +t_1})$.

We define the sequence $a_{i,k_0 +t_m}|_{m=1}^{\infty}$, for each $m\geq 1$ in the same manner as above. We take,
\begin{align}\label{eq91'}
    h_{m} \cdot \sup\limits_{B(0, b_{i,k_0})\setminus B(0,\frac{1}{2} a_{i,k_0 +t_m})}|\G^{''}_i(x)|\ll  \frac{S}{(b_{i,k_0 })^{n-1}}\leq \frac{I}{(b_{i,k_0 })^{n-1}}, \ h_m\ll \frac{1}{2}a_{i,k_0 +t_{m} }.
\end{align}
Here the implied constants are allowed to depend on $M,S$ as well. Here $t_{m+1}$, for $ m\geq 1$ is the greatest integer so that $a_{i,k_0 +t_{m+1} -1}\geq \frac{1}{2}a_{i,k_0 + t_{m}}$. For all the values $\{a_{i,k_0 +t_m},\dots,a_{i,k_0 +t_{m+1} -1}\}$, the decrement at each step is taken as $h_m$, with $a_{i,k_0 +t_{m}+j}$, for $1\leq j\leq t_{m+1}-t_{m}$, being the minimum point of the level set of $\G(a_{i,k_0 + t_{m}+ j-1}-h_m)$.

It is clear that by this process, the sequence $a_{i,k}|_{k=1}^{\infty}$ so defined converges to $y_i$. We continue this iteration till we reach some value $t_{m_0}=N$, with $a_{i, k_0 +N}=\eps$ so that $\G(a_{i,k_{0}+N})=\G(\eps)\geq K(M,\lambda)\Big(\frac{1}{\eps}\Big)^{n-2}\gg \G(b_{i,k_0})= \G(a_{i,k_0})$, where we have invoked the result of Theorem 1, proved in Section 3.

Starting with $b_{i,k_0}$, we define $b_{i,k+t}$ as the point on the sphere of radius $|b_{i,k+t-1}-h_{m_0})|$ where the maximum value of the Green's function is attained. By definition, we have that $\G(b_{i,k+t})\geq \G(b_{i,k+t-1}-h_{m_0}) $.

Then, we have, for each  $k\geq 1$ in case of the sequence of $b_{i,k}$'s, and for each $1\leq j\leq t_{m+1}-t_{m}-1$, for the case of the sequence of the $a_{i,k}$'s, that,
\begin{multline}\label{eq921}
    \G(b_{i,k}-h_{m_0})- \G(b_{i,k}) =\Big|\frac{\partial\G}{\partial r}\Big|_{b_{i,k}} \cdot h_{m_0} +O_{m_0}(h_{m_0}^2), \\  
    \G(a_{i,k_0 + t_{m}+ j-1}-h_m)- \G(a_{i,k_0 + t_{m}+ j-1})=\Big|\frac{\partial\G}{\partial r}\Big|_{a_{i,k_0 + t_{m}+ j-1}} \cdot h_m + O_m(h_{m}^2),
\end{multline}

 Here the implied constants depend on the bounds on the second derivative of $\G$ in $\Omega\setminus B(y_i,a_{i,k_0 + t_{m}+j-1})$ for each $j$ considered. 

So we have,
\begin{align}\label{eq98}
     \Big(\sum\limits_{q=0}^{N}\frac{h_q S}{(a_{i,(k_0 +q)} )^{n-1}}+ O_q(h_{q}^2)\Big)\geq\G(a_{i,k_0 +N})- \G(a_{i,k_0})= \G(\eps) -\G(a_{i,k_0})\approx \G(\eps )\gg \G(a_{i,k_0}).
\end{align}
Here we have $h_q=h_m$ whenever $ k_0 +t_{m}\leq q\leq k_0 +t_{m+1} $ . 

Using \cref{eq91,eq91'}, we find a slightly altered constant $S'$, so that we have, 
\begin{align}
    \Big(\sum\limits_{q=0}^{N}\frac{h_q S'}{(a_{i,(k_0 +q)} )^{n-1}}\Big)\geq \G(\eps) \gg \G(a_{i,k_0}).
\end{align}
In this case, we add up the equality in \cref{eq921} above. First we get,
\begin{align}
    \G(b_{i,k_0+N_1}) -\G(b_{i,k_0})\geq  \sum_{k}\Big(\G(b_{i,k}-h_{m_0})- \G(b_{i,k}) \Big)\geq c\Big(\sum\limits_{q=1}^{N_1}\frac{Ih_{m_0}}{(b_{i,(k_0 +q)} )^{n-1}} +O_{m_0} (h_{m_0}^2)\Big)    ,
\end{align}
where $b_{i,k_{0}+N_1}$ lies in the domain $\Omega\setminus B(y_i,\eps)$, with the property that, 
\begin{align}
    |b_{i,k_0 +N_1} - \eps|\leq h_{m_0}.
\end{align}

We terminate the sequence of $b_{i,j}$'s at $j=N_1$. Clearly $N_1\geq N$. 

In this case, from \cref{eq91'}, we choose the implied constants so that $h_{m_0}$ is small enough, so at each step we have, 
\begin{align}
     \frac{I}{(b_{i,k_0 +q})^{n-1}}\gg h_{m_0},
\end{align}
for all $b_{i,k_0 +q}\in \Omega\setminus B(y_i, \eps)$, and adding this up $N_1$ many times, we get with a slightly altered constant that, 
\begin{align}\label{eq94}
 \G(b_{i,k_0+N_1})\geq  \G(b_{i,k_0+N_1}) -\G(b_{i,k_0})\geq cI'\Big(\sum\limits_{m=0}^{N_1}\frac{h_{m_0}}{(b_{i,(k_0 +m)} )^{n-1}}\Big)\geq cMS'\Big(\sum\limits_{m=0}^{N_1}\frac{h_{m_0}}{(b_{i,(k_0 +m)} )^{n-1}}\Big) ,
\end{align}
for a slightly altered constant $I'$.\footnote{In fact, one can also work with this same $h_{m_0}$ for the sequences of $a_{i,k}|_{k=k_0}^{\infty}$ in going from $a_{i,k_0}$ to $a_{i,k_0 +N}=\eps$, which would slightly alter the argument presented above.}

Further we also have, 
\begin{align}
     \G(a_{i,k_0+N}) -\G(a_{i,k_0})\leq c\Big(\sum\limits_{q=0}^{N}\frac{S h_q}{(a_{i,(k_0 +q)} )^{n-1}}   + O_q( h_{q}^2)\Big),
\end{align}
where as before, we have $h_q=h_m$ whenever $ k_0 +t_{m}\leq q\leq k_0 +t_{m+1} $

 Again, we have, by a slightly altered constant $S'$, that,
\begin{align}\label{eq96}
    \G(a_{i,k_0 +N}) \approx  \G(a_{i,k_0+N}) -\G(a_{i,k_0})\leq cS'\Big(\sum\limits_{q=0}^{N}\frac{h_q}{(a_{i,(k_0 +q)} )^{n-1}}\Big),
\end{align}
and without loss of generality, we can take $I'\geq M S'$ as above. 

Thus for $M$ large enough, from \cref{eq94,eq96,eq98} using a basic integral test, comparing the values of  $\G(b_{i,k_0+N_1}), \G(a_{i,k_0+N})$, we have a violation of Harnack's inequality, on an annular region of width $h$ centered on the sphere $S(b_{i,k_0 +N_1})$. This is because of \cref{eq98}, and the right hand side of \cref{eq94} is then arbitrarily large compared to the right side of \cref{eq96}.

Thus \cref{eq89} is established. \end{proof}

We continue with the proof of \cref{thm2}, by contradiction. The sets $\Omega_{2s^{*}}=\{x:\G(x,0)\geq 2s^{*}\}\subset B(0,1/L)$, with 
\begin{align}\label{eq45}
    \frac{|\{x:\G_i(x,0)\geq 2s^{*}_{i}\}|}{|B(0,1/L)|}\to 0,
\end{align}
as $i\to \infty$.

Consider $i$ large enough to be determined later, and the point $y_{i,\text{min}}$ on the boundary $\partial \Omega_{2s^{*}_{i}}$ that is the minimum distance $\delta_{\text{min},y_i}$ from the origin. As a consequence of \cref{eq45}, for the uniform $L$, we have $\delta_{\text{min},y_i}\to 0$ as $i\to \infty$. 

In this case, the ball $B(0,\delta_{\text{min},y_i})\subset \Omega_{2s^{*}_{i}}$. By the Harnack inequality, there is a constant $\beta\gg 1$ independent of $r$ so that for all points on the sphere $S(0,r)$, the values of the Green's function with the pole at origin are comparable and we will have
\begin{align}
\sup\limits_{S(0,r)}\G_i(\cdot,0)\leq \beta \inf\limits_{S(0,r)}\G_i(\cdot,0).
\end{align} 
We thus have by definition, $\G_i(y_{i,\text{min}},0)\approx 2s^{*}_{y_i}$. 
%(Argument first only for the Laplacian)

Consider the sphere $S(0,y_{i,\text{min}})$ of radius $\delta_{\text{min}}$. In this case, we have $S(0,y_{i,\text{min}})\subset \Omega$ with a point of tangency of this sphere $S(0,y_{i,\text{min}})$ with $\partial \Omega_{2s^{*}_i}$ at the point $y_{i,\text{min}}$. At  $y_{i,\text{min}}$ the gradient $\nabla \G_i(\cdot,0)$ is inward normal to both $S(0,y_{i,\text{min}}),\partial \Omega_{2s^{*}_i}$\footnote{If there is any nonzero component of the gradient in the tangential direction, then to first order, one would find points $x_1,x_2 \in S(y_i,y_{i,\text{min}})$ on the interior sphere so that $\G(x_1,y_i)<\G(y_{i,\text{min}},y_i)<\G(x_2, y_i)$ which contradicts the fact that for all $x\in  \Omega_{2s^{*}_{y_i},y_i}$, by the maximum principle $\G(y_{i,\text{min}},y_i)\leq \G(x, y_i)$. }. Further, we must have $\frac{\partial^{2} \G_i(\cdot,0)}{\partial \theta_{k}^{2}}|_{y_{i,\text{min}}}\geq 0$ for any of the $(n-1)$ angular coordinates $\theta_k$ , otherwise up to second order,  noting from above that $\frac{\partial \G_i(\cdot,0)}{\partial \theta_k}|_{y_{i,\text{min}}}=0$ for all angular coordinates $\theta_i$, and that $\G_i(\cdot,0)$ is locally in $C^3$, we will find points on the interior sphere $x\in S(0,y_{i,\text{min}})$ with $\G_i(x,0)<\G_i(y_{i,\text{min}},0)$ that again contradicts the maximum principle.

Again, in this context, we suppress the dependence of $\G_i$ on $i$ below.

From the expression of the Laplacian locally in polar coordinates, up to a rotation of the coordinate axes, and noting that all the angular first derivatives are zero at the point $y_{i,\text{min}}$, and all the second derivatives with respect to the angular coordinates are positive, we will have
\begin{multline}\label{diffineq}
    \frac{1}{r^{n-1}}\frac{\partial}{\partial r}\big( r^{n-1}\frac{\partial \G(\cdot,0)}{\partial r} \big) +\B\cdot \hat{r} \frac{\partial \G(,0)}{\partial r}  = \frac{\partial^{2}\G(\cdot,0)}{\partial r^{2}} +\frac{(n-1)}{r}\frac{\partial \G(\cdot,0)}{\partial r}+ \B\cdot \hat{r} \frac{\partial \G(,0)}{\partial r} \leq 0 
\end{multline}

On the sphere $S(0,y_{i,\text{min}})$, the above inequality is valid at the point $y_{i,\text{min}}$.

For some $b_{i,k}$ to be determined in a later step, we begin the iteration with $r_0= a_{i,k}$ which is the minimum distance to the level set $\G_i(\cdot,0)=M_{b_{i,k}}$ where $M_{b_{i,k}}:=\sup\limits_{S(0, b_{i,k})}\G_i(\cdot,0)$.

We call $K_{i,k}:=\overline{B(0,1/L)}\setminus B(0,a_{i,k})$. When we write $\G(r)$, it will be understood from context that we mean $\G_i(r,0)$ . In particular, from the statement of \cref{lemma} we note that,
\begin{align}
    \Big|\frac{\partial\G_i}{\partial r}\Big|_{a_i,k}a_{i,k}^{n-1}=G_{i,k}>0.
\end{align}
Further, we write,
\begin{align}
    \sup\limits_{K_{i,k}} G_{i}''=H_{i,k}.
\end{align}

Now we choose the increment $h$ small enough so that,
\begin{align}\label{eq644}
     \Big|\frac{\partial\G_i}{\partial r}\Big|_{a_i,k}a_{i,k}^{n-1} L^{n-1}  \gg h \sup\limits_{K_{i,k}} G_{i}''=h H_{i,k}.
\end{align}

\footnote{Note that by the subsequent iteration, we will have for each $r\leq \frac{1}{L}$, \begin{align}
    \Big|\frac{\partial \G}{\partial r}\Big|_{r}\Big| \geq C\Big|\frac{\partial \G}{\partial r}\Big|_{a_{i,k}}\Big|\Big(\frac{a_{i,k}^{n-1}}{r^{n-1}}\Big),
\end{align} }
First we use the Taylor theorem to second order to get, for any $j\geq 0$, 
\begin{align}\label{eq655}
    \G(r_j +h)=\G(r_j) +h\G'(r_j) +\frac{h^2}{2}\G''(r_{j}+\theta_j h),
\end{align}
for some $\theta_j \leq 1$.
We have that for each $k$\footnote{We only need to consider $k\geq s^{*}_{y_i}.$}, using the Hopf lemma, the radial derivative $\frac{\partial \G}{\partial r}\leq 0$ at the point of tangency of each level set $\partial\Omega_{k}$ with this interior sphere. This holds true for each $j\geq 0$ with $ a_{i,k}\leq r_{j} \leq 1/L$.

At the first step for $j=0$, using \cref{eq644}, we get from \cref{eq655}, that, 
\begin{align}
    \G(r_0 +h) \leq G(r_0).
\end{align}
 We then define the length $h_0\leq h$ and the point $\overrightarrow{r_1}$ which is at the minimum distance from the origin to the level set of $\G=\G(r_0 +h)$ so that $\G(r_0 +h)-\G(r_0)=\G(r_{1})-\G(r_{1} -h_{0})$. where $r_{1}-h_0$ is on the radial line joining the origin to $r_{1}$. Note that $\G(r_{0}+h)=\G(r_{1})$ by definition and thus $\G(r_{0})=\G(r_{1}-h_0)$.

At the $j'$th step ($j\geq 1$), we consider $(r_j+h)\hat{r_j}$ where $\hat{r_{j}}$ is the unit vector in the direction $r_{j}$. Consider $\G((r_{j}+h)\hat{r})$ and next define the point $\overrightarrow{r_{j+1}}$ which is at the minimum distance from the origin to the level set of $\G=\G((r_{j}+h)\hat{r_j})$. Henceforth, by $\G(r_j), \G(r_j +h)$, we mean the values of the Green's function at the point $r_j \hat{r_j}$ and $(r_j +h)\hat{r_j}$ respectively , and we omit the unit vector from the notation. 

Note that using \cref{eq644}, and the subsequent argument till \cref{diffineq'}, first for the case $j=0$, that at each subsequent step we have, $\G(r_j +h)\leq \G(r_j)$ for each $j\geq 1$, and thus we have for each $j\geq 1$ that enables us to define $h_j\leq h$,  $h_j\leq h$ so that $\G(r_j +h)-\G(r_j)=\G(r_{j+1})-\G(r_{j+1} -h_{j})$. where $r_{j+1}-h_j$ is on the radial line joining the origin to $r_{j+1}$. Note that $\G(r_{j}+h)=\G(r_{j+1})$ by definition and thus $\G(r_{j})=\G(r_{j+1}-h_j)$.

 We have,
\begin{align}\label{growth}
    \G(r_j)=\G(r_j +h)-h\G^{'}(r_j +h) +\frac{h^{2}}{2}\G^{''}(r_j +h\theta),
\end{align}
for some $\theta<1$.

 This can be written, for some other constant $\theta_1<1$, 
\begin{align}\label{eq39}
    \frac{\G(r_j +h)-\G(r_j)}{h}=\G^{'}(r_j +h) -\frac{h}{2}\G^{''}(r_j)-\frac{h^{2}}{2}\theta\G^{'''}(r_j +\theta_1 h).
\end{align}
Note that, $\G(r_j +h)-\G(r_j)=\G(r_{j+1})-\G(r_{j+1} -h_{j})$, with $0\leq h_{j}\leq (r_{j+1}-r_j)\leq  h$ by definition.
Further, we have, 
\begin{align*}
    \G(r_{j+1} -h_{j})=\G(r_{j+1}) -h_{j}\G^{'}(r_{j+1}) +\frac{h_{j}^{2}}{2}\G^{''}(r_{j+1}) -\frac{h_{j}^{3}}{3!}\G^{'''}(r_{j+1} -\theta h).
\end{align*}
Using \cref{diffineq}, we further get the inequality,
\begin{align}
    \G(r_{j+1} -h_{j})-\G(r_{j+1})\leq -h_{j}\G^{'}(r_{j+1}) +\frac{h_{j}^{2}}{2}\Big(-\Big(\frac{n-1}{r_{j+1}}\Big) -\B(r_{j+1})\cdot \widehat{r_{j+1}}  \Big)\G^{'}(r_{j+1}) -\frac{h_{j}^{3}}{3!}\G^{'''}(r_{j+1} -\theta h).
\end{align}
This gives us, 
\begin{align}
     \G(r_{j+1} -h_{j})-\G(r_{j+1})\leq h_{j}\Big(-\G^{'}(r_{j+1})\Big)\Big(1 + \frac{n-1}{2r_{j+1}}h_{j} +\frac{1}{2}\B(r_{j+1})\cdot \widehat{r_{j+1}}h_{j}\Big)-\frac{h_{j}^{3}}{3!}\G^{'''}(r_{j+1} -\theta h_{j}).
\end{align}
This simplifies to,
\begin{align}
    \G^{'}(r_{j+1})\leq \frac{(-1)}{\big(1+ h_{j}(\frac{n-1}{2r_{j+1}} +\frac{1}{2}\B(r_{j+1})\cdot \widehat{r_{j+1}}) \big)}\Big(\frac{\G(r_{j+1} -h_{j})-\G(r_{j+1})}{h_{j}}+ \frac{h_{j}^{2}}{3!}\G^{'''}(r_{j+1} -\theta h_{j}) \Big).
\end{align}
Up to a first order approximation, this is written assuming that $h$ is arbitrarily small compared with $a_{i,k}$, that,
\begin{align}\label{eq43}
    \G^{'}(r_{j+1})\leq \big( 1 -  h_{j}(\frac{n-1}{2r_{j+1}}  +\frac{1}{2}\B(r_{j+1})\cdot \widehat{r_{j+1}}\Big)\big)\Big(\frac{\G(r_{j+1})-\G(r_{j+1} -h_{j})}{h_{j}}\Big)-\frac{h_{j}^{2}}{3!}\G^{'''}(r_{j+1} -\theta h_{j}).
\end{align}
Further note that,
\begin{align}\label{eq444}
    \G^{'}(r_j +h)=\G^{'}(r_j)+h\G^{''}(r_j)+\frac{h^{2}}{2!}\G^{'''}(r_j +\theta_2 h),
\end{align}
for some $\theta_2 <1$.
Using the fact that $\G(r_j  +h)-\G(r_j)=\G(r_{j+1})-\G(r_{j+1} -h_{j})<0$, and  $0\leq h_{j}\leq (r_{j+1}-r_j) \leq h$, and $h_j$ sufficiently small, we get from \cref{eq43}, that,
\begin{align}
    \G^{'}(r_{j+1})\leq \big( 1 -  h_{j}(\frac{n-1}{2r_{j+1}}  +\frac{1}{2}\B(r_{j+1})\cdot \widehat{r_{j+1}}\Big)\big)\Big(\frac{\G(r_{j}+h)-\G(r_{j})}{h}\Big)+\frac{h_{j}^{2}}{3!}\G^{'''}(r_{j+1} -\theta h_{j}).
\end{align}
Further using \cref{eq39,eq444}, we get, 
\begin{multline}
     \G^{'}(r_{j+1})\leq \big( 1 -  h_{j}(\frac{n-1}{2r_{j+1}}  +\frac{1}{2}\B(r_{j+1})\cdot \widehat{r_{j+1}}\Big)\big)\Big(\G^{'}(r_j)+\frac{h}{2}\G^{''}(r_j)+\frac{h^{2}}{2!}\G^{'''}(r_j +\theta_2 h) -\frac{h^{2}}{2}\G^{'''}(r_j +\theta_1 h)\Big)\\+\frac{h_{j}^{2}}{3!}\G^{'''}(r_{j+1} -\theta h_{j}).
\end{multline}
Thus we get,
\begin{multline}\label{eqimpp0}
     \G^{'}(r_{j +1})\leq \G^{'}(r_j)+  \frac{h}{2}\G''(r_j)     -  h_{j}(\frac{n-1}{2r_{j+1}}  +\frac{1}{2}\B(r_{j+1})\cdot \widehat{r_{j+1}}) \G'(r_j)+O\Big(\frac{h^{2}}{2} \G'''(r_j +\theta_1 h)+\frac{h_{j}^{2}}{2} \G'''(r_{j+1} +\theta_1 h_{j}) \\+ \frac{h^{2}}{2}\G''(r_j)(\frac{n-1}{2r_{j+1}}  +\frac{1}{2}\B\cdot \hat{r}) \Big),
\end{multline}
for some constants $\theta_1,\theta_2$, and so we have, choosing $L$ large enough so that we have $\frac{(n-1)}{r_{j+1}}\gg |\B(r_{j})\cdot \hat{r_j}|$, noting that $\G'(r_j)\leq 0$, $h_{j}\leq h$, and using \cref{diffineq},
\begin{multline}\label{growth1}
    \G^{'}(r_{j+1})\leq \G^{'}(r_j)-\frac{h}{2}\Big(\frac{n-1}{r_j} +\B(r_{j})\cdot \hat{r_j}\Big)\G^{'}(r_j) -  \frac{h}{2}\Big(\frac{n-1}{r_{j+1}}  +\B(r_{j+1})\cdot \widehat{r_{j+1}}\Big) \G'(r_j) + O(\frac{h^{2}}{2} \G'''(r_j +\theta_1 h)\\+\frac{h_{j}^{2}}{2} \G'''(r_{j+1} +\theta_1 h_{j})+ \frac{h^2}{r_{j+1}}\G''(r_j)).
\end{multline}
This gives us, 
\begin{multline}\label{growth3}
    \G'(r_{j+1})\leq \G'(r_j)-h\Big(\frac{n-1}{2}\Big(\frac{1}{r_j}+\frac{1}{r_{j+1}}\Big) +\frac{1}{2}(\B(r_{j})\cdot\hat{r_j}+\B(r_{j+1})\cdot\widehat{r_{j+1}})\Big)\G'(r_j)+O(\frac{h^{2}}{2} \G'''(r_j +\theta_1 h)\\+\frac{h_{j}^{2}}{2} \G'''(r_{j+1} +\theta_1 h_{j})+\frac{h^2}{r_{j+1}}\G''(r_j)).
\end{multline}
We note that $r_{j+1}- r_j \leq h$, and thus get,
\begin{multline}\label{eq64}
    \G'(r_{j+1})\leq \G'(r_j)-h\Big( \frac{n-1}{r_j}  
+ \frac{1}{2}(\B(r_{j})\cdot\hat{r_j}+\B(r_{j+1})\cdot\widehat{r_{j+1}})\Big)\G'(r_j) +O\Big(\frac{h^2}{2 r_{j}^2}\G'(r_j) +\frac{h^{2}}{2} \G'''(r_j +\theta_1 h)\\+\frac{h_{j}^{2}}{2} \G'''(r_{j+1} +\theta_1 h_{j})+\frac{h^2}{r_{j+1}}\G''(r_j)\Big).
\end{multline}
Note that the points $r_{j},r_{j+1}$ by construction could be far apart depending on the shape of the level sets of the Green's function.

Now also note by construction that, $0\leq h_j\leq r_{j+1}-r_j \leq  h$, and we have, 
\begin{multline}\label{eq63}
    \G(r_{j+1}-h_j)-\G(r_{j+1})=  -\G'(r_{j+1})h_j +\frac{h_{j}^2}{2}\G''(r_{j+1}+\theta_1 h_j)),\\
    \G(r_{j})-\G(r_{j}+h)= -\G'(r_{j})h +\frac{h^2}{2}\G''(r_{j}+\theta_2 h)),
\end{multline}
Thus we have,
\begin{align}\label{eq64}
     \G'(r_{j+1})h_j = \G'(r_{j})h -\frac{h_{j}^2}{2}\G''(r_{j+1}+\theta_1 h_j) + \frac{h^2}{2}\G''(r_{j}+\theta_2 h),
\end{align}
and so we have,
\begin{align}\label{eq66}
    |\G'(r_{j+1})|=\frac{h}{h_j}\big(|\G'(r_j)|+ \frac{h}{2}\G''(r_{j}+\theta_2 h) \big)+ \frac{h_{j}}{2}\G''(r_{j+1}+\theta_1 h_j)),
\end{align}
We consider two cases. First, we consider,
\begin{align} \label{eq6222}
(a): |\G'(r_j)|(r_{j+1}-r_j) \geq |\G'(r_j)|h_j\geq h|\G'(r_j)|- 2 h^2  \sup\limits_{K_k}|\G''(r)|
\end{align}
then, 
\begin{multline}\label{eq67}
    (r_{j+1} -r_j)\Big( \frac{n-1}{r_j}  
+ \frac{1}{2}(\B(r_{j})\cdot\hat{r_j}+\B(r_{j+1})\cdot\widehat{r_{j+1}})\Big)|\G'(r_j)|\\ \geq h \Big( \frac{n-1}{r_j}  
+ \frac{1}{2}(\B(r_{j})\cdot\hat{r_j}+\B(r_{j+1})\cdot\widehat{r_{j+1}})\Big)|\G'(r_j)| \\ -O(h^2 \sup\limits_{K_k}|\G''(r)|(\B(r_{j})\cdot\hat{r_j}+\B(r_{j+1})\cdot\widehat{r_{j+1}})\Big)
\end{multline}
%Also, from \cref{eq622}, we get that $O((r_{j+1}-r_j)^2)=O(h^2)$ with a slightly altered constant.

Note that we can ensure the term $\Big( \frac{n-1}{r_j}  
+ \frac{1}{2}(\B(r_{j})\cdot\hat{r_j}+\B(r_{j+1})\cdot\widehat{r_{j+1}})\Big)$ is positive, by choosing $L$ large enough so that the $\frac{n-1}{r_j}  $ term is bigger in magnitude that the drift term, within the ball $B(0,1/L)$. We get, \cref{eq6222}, we have, $h\geq r_{j+1}-r_j \geq h -2h^2\frac{\sup\limits_{K_k}|\G''(r)|}{|\G'(r_j)|}$ and thus $(r_{j+1}-r_j)$ is comparable to $h$, by \cref{eq644}.

We have uniform bounds on the drift term in the domain $K_k$, so we can choose $h$ small enough so that the second term on the right is negligible at each step of the iteration in $K_k$, and thus we get from \cref{eq67,eq64}, noting that by construction, for each $j$, we have $\G'(r_j)<0$, that, 
\begin{multline}\label{eq68}
    \G'(r_{j+1})\leq \G'(r_j) - (r_{j+1} -r_j)\Big( \frac{n-1}{r_j}  
+ \frac{1}{2}(\B(r_{j})\cdot\hat{r_j}+\B(r_{j+1})\cdot\widehat{r_{j+1}})\Big)\G'(r_j) +O(h^2) 
%\\ = \G'(r_j) - (r_{j+1} -r_j)\Big( \frac{n-1}{r_j}  + \frac{1}{2}(\B(r_{j})\cdot\hat{r_j}+\B(r_{j+1})\cdot\widehat{r_{j+1}})\Big)\G'(r_j) +O(h^2)
\end{multline}

On the other hand, when we have the inequality opposite of \cref{eq6222}, we get, 
\begin{align}
(b):  |\G'(r_j)|h_j< h|\G'(r_j)|- 2 h^2  \sup\limits_{K_k}|\G''(r)|\Leftrightarrow  h|\G'(r_j)|>  |\G'(r_j)|h_j + 2 h^2  \sup\limits_{K_k}|\G''(r)|,
\end{align}
with $h$ small enough, we use the binomial approximation to first order to get from \cref{eq66},
\begin{align}
    |\G'(r_{j+1})|\geq  |\G'(r_j)|+ \frac{2h^2}{h_j}\sup\limits_{K_k} |\G''(r)|+  \frac{h^2}{2h_j}\G''(r_{j}+\theta_2 h) \big)+  \frac{h_{j}}{2}\G''(r_{j+1}+\theta_1 h_j)).
\end{align}
From here we easily see, for $h$ small enough, that,
\begin{align}\label{eq72}
    \G'(r_{j+1})\leq \G'(r_{j})
\end{align}
%In either case, we see the same inequality in \cref{eq68,eq72}, where we have incorporated all the error terms from \cref{eq64,eq66} in the expression for $O(h^2)$.
%\begin{align}\label{one}
   % h^{2} \sup\limits_{K_k} |\G^{'''}(x)|\ll \frac{h}{r_j} |\G'(r_j)|.
%\end{align}
%Above, we further require that $h$ is small enough at each step so that,
%\begin{align}\label{two}
     %h \sup\limits_{K_k}|\G^{''}(x)|\ll |\G'(r_j)|, h\ll a_{i,k}.
%\end{align}

Further, the decay of the Green's function in either case is given for each $j$ by,
\begin{align}\label{growth2}
    \G(r_{j+1})=\G(r_j +h)=\G(r_j) +h\G^{'}(r_j) +\frac{h^2}{2}\G^{''}(r_j +\theta h).
\end{align}

At each step, we choose $h$ small enough, with $(r_{j+1}-r_j)\leq h$, so that we can write the right hand side of \cref{eq68} , 
\begin{multline}\label{eq73'}
    \G'(r_{j+1})\\  \leq \G'(r_j)\Big( 1- \frac{(n-1)(r_{j+1}-r_j)}{r_j} \Big) \Big( 1- (r_{j+1}-r_j) \frac{1}{2}(\B(r_{j})\cdot\hat{r_j}+\B(r_{j+1})\cdot\widehat{r_{j+1}})\Big)\Big) +O(h^2) \\ \leq  \G'(r_j)\Big( 1- \frac{(n-1)(r_{j+1}-r_j)}{r_j} \Big)\Big( e^{-(r_{j+1}-r_j) \frac{1}{2}(\B(r_{j})\cdot\hat{r_j}+\B(r_{j+1})\cdot\widehat{r_{j+1}})} \Big)+O(h^2).
\end{multline} 
Iterating this inequality $k$ times, we immediately see that the error term is bounded as $k O(h^2)$ for $\G'(r_{j+k})$.

We choose $N$ large enough so that $Nh\approx \Big(1/L -y_{i,\text{min}}\Big)\approx 1/L$. 

We note that for any two spheres $S(0, r_1), S(0,r_2)$ with $r_1<r_2<1/L$, we have that 
\begin{align}\label{sequential}
\text{min}_{S(0,r_2)}\G(\cdot)\leq \text{min}_{S(0,r_1)}\G(\cdot).\end{align}

To see this, consider the values of the Green's function at these minimum points, \\ $\text{min}_{S(0,r_1)}\G(\cdot)=\G(a_1),\text{min}_{S(0,r_2)}\G(\cdot)=\G(a_2)$ , where $a_1$ and $a_2$ need not be unique on the surface of the spheres $S(0,r_1), S(0,r_2)$ respectively. If we have to the contrary that $\text{min}_{S(0,r_2)}\G(\cdot)> \text{min}_{S(0,r_1)}\G(\cdot)$, then using the maximum prnciple we would conclude that $S(0,r_2)\subset \{x:\G(x)\geq \G(a_1)\}$ and then by definition the sphere $S(0,r_1)$ cannot be tangent to the hypersurface $\{x:\G(x)\geq \G(a_1)\}$ at $a_1$, and we have a contradiction.

In this case, the error up to the order of $O(h^2)$ in any of the derivatives considered in either of \cref{eq72,eq73'} at the $k$'th step with $k\leq N$ is $kO(h^2)$. Thus, further, the error in the quantity $|\G'(r_{j+k})|h\lesssim kO(h^3)$, for any $k$. Using \cref{sequential} and the argument following it, we see that the total error in the change of the Green's function is $\sum_{k=1}^{N}  k O(h^3)\approx N^2 O(h^3)\approx h\cdot  (1/L)^2 \to 0$ as $h\to 0$.

% In the iteration, at each step $\G'(r_i), i\in J$ is non-positive, and beginning with \cref{lowerbound} and using \cref{growth1}, one gets a lower bound at each step on the magnitude of $\G'$ and one can a-priori choose the implied constants in \cref{one,two} so large\footnote{In the decay of the Green's function, if the derivatives $\G'(r_i)$ were to decay to violate \cref{one,two} then the $\G(r_i)$ is easily seen to decay to a value so that \cref{eq43'} is already satisfied, in which case the argument is complete. } that the leading order term on the right in \cref{growth1} is $\G'(r_i)\Big(1-\frac{n-1}{r_i} +\B\cdot\hat{r} \Big)$, and the leading order terms in \cref{growth2} is given by $\G(r_i) +h\G'(r_i)$. 

 %We note that when the drift term has been made arbitrarily small as in \cref{driftsmall}, when $h$ has been made arbitrarily small, starting with \cref{lowerbound} and using \cref{growth3,growth2} we will get, using the binominal theorem to first approximation, that the value of $\G'$ for each $i$ satisfies the inequality of \cref{eq40} below. 
 
 %We can choose the index set to be large enough with $|J|\approx |(\delta_{\Omega}(y_i)/L) -a_{i,k}|/h \approx|\delta_{\Omega}(y_i)/L h |$ for $i$ large enough. In this case, the decay is approximated by the solution of a differential inequality of the form:
Thus the solution to the inequality
\begin{align}\label{eq855}
   \G'(r_{j+1})\leq \G'(r_j)\Big( 1- \frac{(n-1)(r_{j+1}-r_j)}{r_j} \Big)\Big( e^{-(r_{j+1}-r_j) \frac{1}{2}(\B(r_{j})\cdot\hat{r_j}+\B(r_{j+1})\cdot\widehat{r_{j+1}})} \Big),
\end{align}
as $h\to 0$, is given by the following differential inequality, 
\begin{align}\label{diffineq'}
    \frac{\partial^{2}\G(\cdot,0)}{\partial r^{2}} +\frac{(n-1)}{r}\frac{\partial \G(\cdot,0)}{\partial r}+ \B\cdot \hat{r} \frac{\partial \G(,0)}{\partial r} \leq 0,
\end{align}
 
\bigskip
\footnote{If we integrate this, to get for  $r\geq a_{i,k_0}$, that, 
\begin{align}\label{eq40}
    \frac{\partial \G}{\partial r}\Big|_{r} \leq C\frac{\partial \G}{\partial r}\Big|_{a_{i,k}}\Big(\frac{a_{i,k}^{n-1}}{r^{n-1}}\Big),
\end{align}

where the constant $C$ incorporates the exponential factor contribution from the drift term.}

Using the cases where the above differential inequality \cref{eq855} holds, along with the cases where we have \cref{eq72} holding, we thus have,  

\begin{lemma}For any $a_{i,k_0}\leq r_1\leq r_2$, with $r_1,r_2$ minimum points of some level sets of the Green's function, we get,
\begin{align}\label{eq41}
    \G(r_1,0)-\G(r_2,0)\geq 2C\Big| \frac{\partial \G}{\partial r}\Big|_{a_{i,k}}a_{i,k}^{n-1}\Big(\frac{1}{r_{1}^{n-2}} - \frac{1}{r_{2}^{n-2}} \Big).
\end{align}
\end{lemma}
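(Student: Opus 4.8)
The plan is to convert the differential inequality \cref{diffineq'} --- just established for the effective radial profile $r\mapsto\G(r,0)$ built by the iteration through the minimum points $r_0=a_{i,k},r_1,r_2,\dots$ of the level sets --- into the integrated estimate \cref{eq40}, and then integrate once more in $r$. Set $\psi(r)=r^{n-1}\,\partial_r\G(r,0)$; recall that $\partial_r\G\le 0$ along the whole chain by the Hopf lemma, so $\psi\le 0$. Rewriting \cref{diffineq'} as $\psi'(r)+(\B\cdot\hat r)\,\psi(r)\le 0$ and using the integrating factor $\mu(r)=\exp\!\Big(\int_{a_{i,k}}^{r}\B\cdot\hat r\,dt\Big)$ shows that $\mu\psi$ is nonincreasing, hence $\psi(r)\le\mu(r)^{-1}\psi(a_{i,k})$ for $a_{i,k}\le r\le 1/L$. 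The key observation is that $\mu$ is pinched between two positive constants depending only on $M$ and $L$: inside $B(0,1/L)$ one has $|\B|\le M/\delta^{1-\beta}\le C(M,L)$ and $|r-a_{i,k}|\le 1/L$, so $|\log\mu(r)|\le C(M,L)$. Therefore there is a constant $C=C(M,L)>0$ with $-\partial_r\G(r,0)\ge C\,\big|\tfrac{\partial\G}{\partial r}\big|_{a_{i,k}}\,a_{i,k}^{\,n-1}/r^{\,n-1}$ for $a_{i,k}\le r$, which is exactly \cref{eq40}.

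Next I would integrate this pointwise lower bound for $-\partial_r\G$ from $r_1$ to $r_2$ along the chain of minimum points. Using $\G(r_1,0)-\G(r_2,0)=-\int_{r_1}^{r_2}\partial_r\G(r,0)\,dr$ --- the $h\to 0$ limit of the telescoped increments \cref{growth2}, whose accumulated Taylor error was shown above to be $\sum_{k\le N}kO(h^3)\approx h\,(1/L)^2\to 0$ --- we obtain
\[
  \G(r_1,0)-\G(r_2,0)\ \ge\ C\,\Big|\tfrac{\partial\G}{\partial r}\Big|_{a_{i,k}}\,a_{i,k}^{\,n-1}\int_{r_1}^{r_2}\frac{dr}{r^{\,n-1}}\ =\ \frac{C}{n-2}\,\Big|\tfrac{\partial\G}{\partial r}\Big|_{a_{i,k}}\,a_{i,k}^{\,n-1}\Big(\frac{1}{r_1^{\,n-2}}-\frac{1}{r_2^{\,n-2}}\Big).
\]
Absorbing the harmless factor $1/(n-2)$ into the constant gives the claimed inequality (the constant denoted $2C$ in the statement).

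The ODE manipulation above is routine; the one genuinely delicate point is the identity $\G(r_1,0)-\G(r_2,0)=-\int_{r_1}^{r_2}\partial_r\G(r,0)\,dr$ when the minimum points realizing the radii $r_1$ and $r_2$ lie in different directions, so that $r\mapsto\G(r,0)$ is a profile along a chain rather than a restriction to a single ray. This is exactly what the iteration preceding \cref{diffineq'} supplies: one must verify that the discrete chain $\{r_j\}$ with $r_{j+1}-r_j\le h$ can be continued from $r_1$ to $r_2$ while staying inside $B(0,1/L)$ (where the drift bound holds and $(n-1)/r$ dominates $|\B\cdot\hat r|$, keeping the prefactor in \cref{eq73'} positive), that each step is governed by \cref{eq72} or \cref{eq855}, and that the order-$O(h^2)$ errors in \cref{growth2,eq73'} accumulate to $o(1)$ as $h\to 0$, uniformly in the drift configuration and in $i$. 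Granting this, the telescoped sum converges to the stated integral and the lemma follows.
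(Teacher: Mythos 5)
Your proposal is correct and follows essentially the same route as the paper: it converts the radial differential inequality \cref{diffineq'} (together with the steps where \cref{eq72} applies) into the integrated bound of \cref{eq40}, with the drift absorbed into the constant via the exponential of its radial integral, and then integrates once more along the chain of minimum points, controlling the telescoped $O(h^2)$ errors exactly as in the paper's iteration. No substantive difference from the paper's argument.
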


\bigskip
Specifically we choose $a_{i,k}< r_1 =y_{i,\text{min}}\leq r_2=\frac{1}{L} $, and get,
\begin{align}\label{eq799}
   2s^{*}_{i}- \frac{1}{C_1}s^{*}_{i}\geq \frac{1}{C_1} \G(y_{i,\text{min}})-\G(\frac{1}{L}\widehat{s_{min}})\geq C \Big| \frac{\partial \G}{\partial r}\Big|_{a_{i,k}}a_{i,k}^{n-1}\Big(\frac{1}{y_{i,\text{min}}^{n-2}} -L^{n-2}\Big).
\end{align}

Here, $C_1$ is a uniform constant arising from the Harnack inequality. In case we have that $y_{i,\text{min}}\to 0$ as $i\to \infty$, then choosing $i$ a posteriori large enough, we get from above that,
\begin{align}\label{eq800}
    s^{*}_{i}\gtrsim C' \Big| \frac{\partial \G}{\partial r}\Big|_{a_{i,k}}a_{i,k}^{n-1}\cdot \frac{1}{y^{n-2}_{i,\text{min}}}
\end{align}
for some uniform constant $C'$.

\bigskip
Now instead of looking the points of minimum of the level sets of the Green's function, we look at the corresponding points of maximum of the level sets.

For these points of maximum, we have, 
\begin{align}\label{diffineq'''}
    \frac{\partial^{2}\G_i(\cdot,0)}{\partial r^{2}} +\frac{(n-1)}{r}\frac{\partial \G_i(\cdot,0)}{\partial r}+ \B\cdot \hat{r} \frac{\partial \G_i(\cdot,0)}{\partial r} \geq 0,
\end{align}

%Fix $m\geq 0$. Suppose first that $\G'(s_0)=0$, where $s_0$ is the maximum point of the level set $\G(\cdot)=1/m$. In this case, choose an $h$ sufficiently small in comparison to $\sup\limits_{\Omega_{m}\setminus } \G''(x)$, and we get using the Taylor theorem, $\G(s_1)=\G(s_0 -h)=\G(s_0)-h^2\G''(s_0 -\theta_0 h)$ , and here, $s_1$ is the maximum point of the level set of $\G(\cdot)=\G(s_0 -h)$, and $0\leq \theta_0 \leq 1$ some constant. Further, again assume that $\G'(s_1)=0$. Let $\G(s_1 +h_1)=G(s_0)$, and then we have, again using the Taylor's theorem, with $0\leq h_{1}\leq h$, that, $\G(s_1 +h_1)=\G(s_0)= \G(s_1) + h_{1}^{2}\G''(s_1 +\beta h_1)$, and thus from above, we have, $\G(s_1 +h_1)=\G(s_0)= \G(s_0) -h^2\G''(s_0 -\theta_0 h) + h_{1}^{2}\G''(s_1 +\beta h_1)$, and so we have,
%\begin{align}
   % h^2\G''(s_0 -\theta_0 h) = h_{1}^{2}\G''(s_1 +\beta h_1)
%\end{align}
%Thus, we have, 

We first carry through a general calculation analogous to \cref{growth} through \cref{eq63}, for the maximum points. In this case, consider any maximum point $s_i \hat{s_i}$, where $\hat{s_i}=\frac{\vec{s_i}}{|s_i|}$,  and the level set $\G(\vec{s_i})$.  For any small enough $t$ to be determined later, consider the point $(s_i -t)\hat{s_i}$. and consider the level set of $\G((s_i -t)\hat{s_i})$ and the maximum point $\vec{s_{i+1}}$ of this level set. As before, we remove the reference to the radial unit vector from now on, for simplicity in notation.

We enumerate domains $\Omega_{m}:= \{x\in\Omega:\G(x)\leq 1/m  \}$, $m\geq 1$,  and take the sequence $m\to \infty$ in the final step of the argument. We also define, for each $m$,  $\Omega'_{m}:=\Omega_m \setminus B(0,b_{i,k_0})$, where $b_{i,k_0}\leq y_{i,\min}$ is a maximum point chosen from \cref{lemma}, with $a_{i,k_0}\leq y_{i,\text{min}}$ being the corresponding minimum point chosen from the lemma.

First we assume that, 
\begin{align}\label{eq977}
    \G(s_i -t)=\G(s_i) -t \G'(s_i) +\frac{t^2}{2}\G''(s_{i}-\theta t),
\end{align}
for some $\theta\leq 1$. In this case we choose $\beta_m:= 2\sup\limits_{\Omega'_m}|\G''(r)|$. We first consider the case, where $|\G'(s_i)|> K_m t$, for some $K_m \gg \beta_m$.\footnote{Note that this condition also appears prior to \cref{lemma8} later on.} In that case, it is clear from \cref{eq977} that we must have, 
\begin{align}
    \G(s_i -t)\geq \G(s_i).
\end{align}
We then define the length $t_i$ so that, $\G(s_{i+1} +t_i)=\G(s_i)$. By construction, we have, $0\leq t_i \leq s_i -s_{i+1}\leq t$. Further, we have, $\G(s_i -t )=\G(s_{i+1})$.  We can carry through calculations exactly analogous to \cref{growth} through \cref{eq63} here. 

The case where we have $|\G'(s_i)|\leq K_m t$ is considered in \cref{lemma8,lemma9} and the argument following \cref{lemma9}.

\begin{lemma}\label{lemma7}
Given a large positive integer $m$, for any level set $\{x:\G(x)=\G(\vec{s_i})\}$ with the value $\G(\vec{s_i})\geq 1/m$, with $|\G'(s_i)|> K_m t$, where $\vec{s_i}$ is the maximum point of this level set, with $|\vec{s_i}|\geq b_{i,k_0}$ , for $s_{i+1}$ defined above, with $t$ arbitrarily small in comparison to $b_{i,k_0}$ as well as arbitrarily small in comparison to $\inf \{\text{dist}(x,\partial\Omega): \G(x)=1/m\}$, we have,    
\begin{align}\label{eq670}
     \G^{'}(s_{i+1})\leq  \G^{'}(s_i) +\frac{t_{i}}{2}\Big(\frac{n-1}{s_{i+1}} +\B(s_{i+1})\cdot \widehat{s_{i+1}}) \Big)\G^{'}(s_i) + \frac{t}{2}\Big(\frac{n-1}{s_{i}} +\B(s_{i})\cdot \widehat{s_{i}}) \Big)\G^{'}(s_i) +O(t^2),
\end{align}
\end{lemma}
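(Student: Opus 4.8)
The plan is to mirror the computation carried out for the minimum points in \cref{growth}--\cref{eq73'}, but now run on a maximum point $\vec s_i$ of the level set $\{\G=\G(\vec s_i)\}$, using the reverse differential inequality \cref{diffineq'''} in place of \cref{diffineq}. First I would set up the Taylor expansions: using \cref{eq977} together with the hypothesis $|\G'(s_i)|>K_m t$ (with $K_m\gg\beta_m=2\sup_{\Omega'_m}|\G''|$) to conclude $\G(s_i-t)\ge\G(s_i)$, hence the maximum point $\vec s_{i+1}$ of the level set $\{\G=\G((s_i-t)\hat s_i)\}$ satisfies $|\vec s_{i+1}|\le|\vec s_i|$ (it lies strictly further from the origin than $s_i-t$ in the radial direction), and then define $t_i$ by $\G(s_{i+1}+t_i)=\G(s_i)$, so $0\le t_i\le s_i-s_{i+1}\le t$ and $\G(s_i-t)=\G(s_{i+1})$. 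This exactly reproduces the structure of the $r_j,r_{j+1},h_j$ iteration with the roles of ``decreasing $r$'' and ``increasing $r$'' swapped.

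Next I would Taylor-expand $\G$ and $\G'$ at $s_i$ and at $s_{i+1}$ to second and third order, as in \cref{growth}--\cref{eq444}, and substitute the spherical Laplacian inequality \cref{diffineq'''}, which at the maximum point gives $\G''(s_{i+1})\ge -\big(\frac{n-1}{s_{i+1}}+\B(s_{i+1})\cdot\widehat{s_{i+1}}\big)\G'(s_{i+1})$ (the key sign flip: at a maximum point the angular second derivatives are $\le 0$, so the radial part dominates with the opposite inequality). Combining the expansion of $\G(s_{i+1}+t_i)-\G(s_{i+1})$ with this bound, solving for $\G'(s_{i+1})$, and then feeding in the expansion $\G'(s_i-t)$ vs.\ $\G'(s_i)$ and the relation $\G(s_i-t)-\G(s_i)=\G(s_{i+1})-\G(s_{i+1}+t_i)$, one arrives at
\begin{align*}
\G'(s_{i+1})\le \G'(s_i)+t\Big(\tfrac{n-1}{s_i}+\tfrac12\B(s_{i+1})\cdot\widehat{s_{i+1}}+\tfrac12\B(s_i)\cdot\widehat{s_i}\Big)\G'(s_i)+O_m(t^2),
\end{align*}
where the $O_m(t^2)$ absorbs all the third-order Taylor remainders and the cross terms $t^2\G''\cdot(\text{drift})$, with implied constants controlled by $\sup_{\Omega'_m}(|\G''|+|\G'''|)$ and the uniform drift bound $|\B|\lesssim M/\delta$ on $\Omega'_m$ — all finite since $\G$ is $C^3_{\mathrm{loc}}$ and $\Omega'_m$ is separated from the pole and (for the chosen $m$) from $\partial\Omega$. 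Here one uses $s_{i+1}-s_i$ comparable to $t$ (forced by the case assumption $|\G'(s_i)|>K_m t$ and a bound analogous to \cref{eq6222}) to replace $\frac{n-1}{s_{i+1}}$ by $\frac{n-1}{s_i}$ at the cost of another $O_m(t^2)$, and one chooses $L$ large (as before) so that $\frac{n-1}{r}$ dominates the drift inside $B(0,1/L)$, keeping the bracket positive.

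The main obstacle I expect is \emph{not} the Taylor bookkeeping but rather justifying that $\vec s_{i+1}$ is well-defined and behaves correctly without a priori control of the shape of the level sets: one must argue that the level set $\{\G=\G((s_i-t)\hat s_i)\}$ actually has a maximum-distance point with $|\vec s_{i+1}|\le|\vec s_i|$, that the radial derivative at that point is again $\le 0$ (via the Hopf lemma / maximum-principle reasoning used just before \cref{eq855} and in the footnotes to \cref{diffineq}), and that the increments $t_i$ can be taken uniformly small so the per-step error is genuinely $O_m(t^2)$ with a constant independent of the step index $i$ inside $\Omega'_m$. Once that geometric step is secured, the estimate \cref{eq670} follows by the same algebra that produced \cref{eq64} and \cref{eq68}, simply with the inequality direction dictated by \cref{diffineq'''} instead of \cref{diffineq}.
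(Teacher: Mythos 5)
Your proposal is correct and follows essentially the same route as the paper's own proof: Taylor expansions at $\vec{s_i}$ and $\vec{s_{i+1}}$ with the increment $t_i$ defined by $\G(s_{i+1}+t_i)=\G(s_i)$ and $0\le t_i\le s_i-s_{i+1}\le t$, substitution of the reversed spherical inequality \cref{diffineq'''} at the maximum points (both at $s_{i+1}$ and again at $s_i$ to handle the $-\tfrac{t}{2}\G''(s_i)$ term), a first-order binomial approximation, and absorption of all remainders into $O_m(t^2)$ using the $C^3$ bounds on $\Omega'_m$ and the drift bound there. The geometric points you flag (that $s_i-t\le |\vec s_{i+1}|\le |\vec s_i|$, and $\G'\le 0$ at maximum points) are exactly the facts the paper uses implicitly, so no genuinely different argument is involved.
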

The condition that  $t$ be arbitrarily small in comparison to $b_{i,k_0}$ and $\inf \{\text{dist}(x,\partial\Omega): \G(x)=1/m\}$ ensures that $t/s_i$ and $t\B(s_{i})\cdot\widehat{s_{i+1}}$ are arbitrarily small. 
\begin{proof} We have, 
\begin{align}
    \G(s_i)=\G(s_i -t) +t\G'(s_i -t) +\frac{t^2}{2}\G''(s_i -t\theta').
\end{align}
for some $\theta' <1$. We write this, with some other constant $\theta_1$, that, 
\begin{align}\label{eq622}
    \frac{\G(s_i)-\G(s_i -t)}{t}= \G'(s_i -t) +\frac{t}{2}\G''(s_{i}) -\frac{t^2}{2}\theta'_1 \G'''(s_i -\theta_1 t).
\end{align}

Further, we get, 
\begin{align}
    \G(s_{i+1} + t_{i})=\G(s_{i+1}) +t_{i}\G^{'}(s_{i+1}) +\frac{t_{i}^{2}}{2}\G^{''}(s_{i+1}) +\frac{t_{i}^{3}}{3!}\G^{'''}(s_{i+1} +\theta t). 
\end{align}
Using \cref{diffineq'''}, we get from above that, 
\begin{align}
    \G(s_{i+1} +t_{i})-\G(s_{i+1})\geq t_{i}\G^{'}(s_{i+1}) +\frac{t_{i}^{2}}{2}\Big(-\Big(\frac{n-1}{s_{i+1}}\Big) -\B(s_{i+1})\cdot \widehat{s_{i+1}}  \Big)\G^{'}(s_{i+1}) +\frac{t_{i}^{3}}{3!}\G^{'''}(s_{i+1} -\theta t)\Big).
\end{align}
This gives us, 
\begin{align}
    \G(s_{i+1} +t_{i})-\G(s_{i+1})\geq t_{i}\Big(\G^{'}(s_{i+1})\Big)\Big(1 - \frac{n-1}{2s_{i+1}}t_{i} -\frac{1}{2}\B(s_{i+1})\cdot \widehat{s_{i+1}}t_{i}\Big)-\frac{t_{i}^{3}}{3!}\G^{'''}(s_{i+1} -\theta t_{i}).
\end{align}
\bigskip
This simplifies, noting that, $\G'(s_{i+1})\leq 0$, and that $t_i \leq t$ has been chosen arbitrarily small in comparison to $\inf\limits_{\Omega'_m}\text{dist}(x,\partial\Omega)$,
\begin{align}
    \G^{'}(s_{i+1})\leq \frac{1}{\big(1- t_{i}(\frac{n-1}{2s_{i+1}} +\frac{1}{2}\B(s_{i+1})\cdot \widehat{s_{i+1}}) \big)}\Big(\frac{\G(s_{i+1} +t_{i})-\G(s_{i+1})}{t_{i}}+\frac{t_{i}^{2}}{3!}\G^{'''}(s_{i+1} +\theta t_{i}) \Big).
\end{align}

Using the binomial approximation only up to first order approximation as earlier, we have, 
\begin{align}\label{eq666}
    \G^{'}(s_{i+1})\leq \big(1+t_{i}(\frac{n-1}{2s_{i+1}} +\frac{1}{2}\B(s_{i+1})\cdot \widehat{s_{i+1}}) \big)\Big(\frac{\G(s_{i+1} +t_{i})-\G(s_{i+1})}{t_{i}}+\frac{t_{i}^{2}}{3!}\G^{'''}(s_{i+1} +\theta t_{i}) \Big).
\end{align}

Further note,
\begin{align}\label{eq4444}
    \G^{'}(s_i -t)=\G^{'}(s_i)-t\G^{''}(s_i)+\frac{t^{2}}{2!}\G^{'''}(s_i -\theta_2 t),
\end{align}
for some $\theta_2 <1$. Using the fact that, $\G(s_i  -t)-\G(s_i)=\G(s_{i+1})-\G(s_{i+1} +t_{i})>0$, and that $0\leq t_i \leq s_i -s_{i+1}\leq t$, and $t_i\leq t$ sufficiently small, we get from \cref{eq666}, that, 
\begin{align}\label{eq667}
    \G^{'}(s_{i+1})\leq \big(1+t_{i}(\frac{n-1}{2s_{i+1}} +\frac{1}{2}\B(s_{i+1})\cdot \widehat{s_{i+1}}) \big)\Big(\frac{\G(s_{i} )-\G(s_{i}-t)}{t}+\frac{t_{i}^{2}}{3!}\G^{'''}(s_{i+1} +\theta t_{i}) \Big).
\end{align}
Using \cref{eq622}, we get, 
\begin{multline}\label{eq668}
    \G^{'}(s_{i+1})\leq \big(1+t_{i}(\frac{n-1}{2s_{i+1}} +\frac{1}{2}\B(s_{i+1})\cdot \widehat{s_{i+1}}) \big)\Big( \G'(s_i -t) +\frac{t}{2}\G''(s_{i}) -\frac{t^2}{2}\theta'_1 \G'''(s_i -\theta_1 t)\\ +\frac{t_{i}^{2}}{3!}\G^{'''}(s_{i+1} +\theta t_{i}) \Big).
\end{multline}
Now using \cref{eq4444}, we get, 
\begin{multline}\label{eq669}
    \G^{'}(s_{i+1})\leq \big(1+t_{i}(\frac{n-1}{2s_{i+1}} +\frac{1}{2}\B(s_{i+1})\cdot \widehat{s_{i+1}}) \big)\Big(\G^{'}(s_i)  -\frac{t}{2}\G''(s_{i})+\frac{t^{2}}{2!}\G^{'''}(s_i -\theta_2 t) \\ -\frac{t^2}{2}\theta'_1 \G'''(s_i -\theta_1 t)+\frac{t_{i}^{2}}{3!}\G^{'''}(s_{i+1} +\theta t_{i}) \Big).
\end{multline}
So we get, using also \cref{diffineq'''}, that, 
\begin{align}\label{eq113}
     \G^{'}(s_{i+1})\leq  \G^{'}(s_i) +\frac{t_{i}}{2}\Big(\frac{n-1}{s_{i+1}} +\B(s_{i+1})\cdot \widehat{s_{i+1}}) \Big)\G^{'}(s_i) + \frac{t}{2}\Big(\frac{n-1}{s_{i}} +\B(s_{i})\cdot \widehat{s_{i}}) \Big)\G^{'}(s_i) +O(t^2),
\end{align}
where the implied constant for the $O(\cdot)$ term depends on the domain $\{x:\G(x)\leq \G(s_i)\}$.
\end{proof}

For a fixed domain $\Omega'_{m}$, we enumerate the radial distances from $y_i$ to the points of maximum as $\{u_j \}_{j=1}^{\infty}$, with $u_{1}=b_{i,k_0}$ and the radial increments as $t$, where $t$ is arbitrarily small in comparison to $b_{i,k_0}$ as well as arbitrarily small in comparison to $d_m :=\min\{\text{dist}(x,\partial\Omega): x\in \Omega'_m\}$. 

Let $u_{j+1}$ be the point of maximum of the level set $\G(u_j +t)$. In case that $\G(u_j +t)<1/m$, we terminate with a value of $t^{*}\leq t$ and repeat the same argument. 

We then start the iteration from $u_{j+1}$, and iterate with decrements of length $t$, using \cref{lemma7}. We write $v_{j,1}=u_{j+1}$, and then $v_{j,2}$ being the point of maximum of the level set $\G(v_{j,1}-t)$. In this case, we have $\G(v_{j,2}+t_1)=\G(v_{j,1})$ and by definition $t_1\leq t$. For all $n\geq 2$, we define the $v_{j,n}$ and $t_n$
analogously. By definition, we have, $0\leq t_{n}\leq (v_{j,n}-v_{j,n+1}) \leq t$.

In this case, we get, by an argument similar to \cref{eq64}, that,
\begin{align}\label{eq80}
    |\G'(v_{j,n+1})|= \frac{t}{t_n}\Big(|\G'(v_{j,n})| +\frac{t}{2} \G''(v_{j,n} +\theta_2 t) \Big) -\frac{t_n}{2}\G''(v_{j,n+1} + \theta_1 t'_j ).
\end{align}
 We consider the two separate cases; first when 
\begin{align}\label{inq}
(a'):|\G'(v_{j,n})|t_n \leq  |\G'(v_{j,n})|t - \Big(\frac{(n-1)}{b_{i,k_0}} \Big)|\G'(v_{j,n})| + 2\sup\limits_{\Omega'_m}|\G''(r)|\Big)t^2.
\end{align}
In this case, we clearly have from \cref{eq80}, using that $t_{n}\leq t$, that, 
\begin{multline}\label{eq82}
  |\G'(v_{j,n+1})|\geq  |\G'(v_{j,n})|\Big( 1 + \Big(\frac{(n-1)}{b_{i,k_0}}\Big)t+\frac{M}{d_{m}^{1-\beta}}t \Big)  +2\sup\limits_{\Omega'_m}|\G''(r)|\frac{t^2}{t_n} +\frac{t^2}{t_n}\G''(v_{j,n} +\theta_2 t)\\ -\frac{t_n}{2}\G''(v_{j,n+1} + \theta_1 t'_j )  \geq  |\G'(v_{j,n})|\Big( 1 + \frac{(n-1)}{b_{i,k_0}} t  \Big). 
\end{multline}
It remains to consider the inequality opposite to \cref{inq}
\begin{multline}\label{eq83}
   (b'):  \Big(\frac{(n-1)}{b_{i,k_0}} + \frac{M}{d_{m}^{1-\beta}}\Big)|\G'(v_{j,n})| + 2\sup\limits_{\Omega'_m}|\G''(r)|\Big)t^2 + |\G'(v_{j,n})|t_n >  |\G'(v_{j,n})|t   \\  \Leftrightarrow -\Big(\frac{(n-1)}{b_{i,k_0}} + \frac{M}{d_{m}^{1-\beta}}\Big)|\G'(v_{j,n})| + 2\sup\limits_{\Omega'_m}|\G''(r)|\Big)t^2 + \G'(v_{j,n}) t_n < \G'(v_{j,n})t  
\end{multline}
For the sequence $v_{j,n}|_{n\geq 1}$ for this fixed $j$, we note that $t_n\leq (v_{j,n} -v_{j,n+1})\leq t$.

Here we have two separate cases, once when,
\begin{align*}
  (*):\   \Big(\frac{n-1}{2v_{j,n}}+ \frac{1}{2}\B(v_{j,n})\cdot \widehat{v_{j,n}}\Big)<0,
\end{align*}
when, substituting the expression for $t\G'(v_{j,n})$ from \cref{eq83}, we get, 
\begin{multline}\label{eq1199}
       \G^{'}(v_{j,n+1})\leq  \G^{'}(v_{j,n}) +t_n\Big(\frac{n-1}{2v_{j,n}}+ \frac{1}{2}\B(v_{j,n}\cdot \widehat{v_{j,n}})\Big)\G^{'}(v_{j,n})\\+ t_n\Big(\frac{n-1}{2v_{j,n+1}}+ \frac{1}{2}\B(v_{j,n+1})\cdot \widehat{v_{j,n+1}})\Big)\G^{'}(v_{j,n})  +O_m(t^2)\\ \leq \G^{'}(v_{j,n}) +(v_{j,n} -v_{j,n+1})\Big(\frac{n-1}{2v_{j,n}}+\frac{1}{2}\B(v_{j,n})\cdot \widehat{v_{j,n}}\Big)\G^{'}(v_{j,n}) \\+t_n\Big(\frac{n-1}{2v_{j,n+1}}+ \frac{1}{2}\B(v_{j,n+1})\cdot \widehat{v_{j,n+1}}\Big)\G^{'}(v_{j,n})  +O_m(t^2).
\end{multline}
%Then we use the last inequality of \cref{eq83}, noting that $\G'(s_{j+1})<0$, to get that, 
%\begin{multline}\label{eq85}
%     \G'(s_{j})\geq \G'(s_{j+1})-(s_j -s_{j+1})) \Big( \frac{n-1}{s_{j+1}}  
%+ \frac{1}{2}(\B(s_{j+1})\cdot\widehat{s_{j+1}}+\B(s_{j})\cdot\widehat{s_{j}})\Big)\G'(s_{j+1}) \\O\Big(\frac{t^2}{2}\G'(s_{j+1}) +\frac{t^{2}}{2} \G'''(s_{j+1} +\theta_1 t)\\+\frac{t^{2}}{2} \G'''(s_{j} +\theta_1 t)+\frac{t^2}{s_{j}}\G''(s_{j+1})\Big)+O(t^2 \sup\limits_{\Omega'_m}|\G''(r)|(\B(r_{j})\cdot\hat{r_j}+\B(s_{j+1})\cdot\widehat{s_{j+1}})\Big).
%\end{multline}
On the other hand, as opposed to (*) when we have, 
\begin{align*}
  (**): \   \Big(\frac{n-1}{2v_{j,n}}+  \frac{1}{2}\B(v_{j,n})\cdot \widehat{v_{j,n}}\Big)\geq 0,
\end{align*}

then we get directly, using $-t \leq -(v_{j,n}-v_{j,n+1})$,
\begin{multline}\label{eq1122}
\G^{'}(v_{j,n+1}) \leq     \G^{'}(v_{j,n}) +(v_{j,n} -v_{j,n+1})\Big(\frac{n-1}{2v_{j,n}} + \frac{1}{2}\B(v_{j,n})\cdot \widehat{v_{j,n}}\Big)\\+ t_n\Big(\frac{n-1}{2v_{j,n+1}}+ \frac{1}{2}\B(v_{j,n+1})\cdot \widehat{v_{j,n+1}})\Big)\G^{'}(v_{j,n})  +O_m(t^2).
\end{multline}

In either case, from \cref{eq1122,eq1199}, we get the same expression.

Then performing a sign analysis again on the remaining coefficient, first consider the case where 
\begin{align*}
  (***): \   \Big(\frac{n-1}{2v_{j,n+1}}+  \frac{1}{2}\B(v_{j,n+1})\cdot \widehat{v_{j,n+1}}\Big)\geq 0,
\end{align*}

In this case, we again use \cref{eq83}, to absorb an additional second order term, and get immediately from \cref{eq1122}/\cref{eq1199}, that,
\begin{multline}
    \G^{'}(v_{j,n+1}) \leq     \G^{'}(v_{j,n}) +(v_{j,n} -v_{j,n+1})\Big(\frac{n-1}{2v_{j,n}} +\frac{n-1}{2v_{j,n+1}}\\ +\frac{1}{2}\B(v_{j,n})\cdot \widehat{v_{j,n}} + \frac{1}{2}\B(v_{j,n+1})\cdot \widehat{v_{j,n+1}})\Big)\G^{'}(v_{j,n})  +O_m(t^2).
\end{multline}

Lastly, when we have,
\begin{align*}
  (****): \   \Big(\frac{n-1}{2v_{j,n+1}}+  \frac{1}{2}\B(v_{j,n+1})\cdot \widehat{v_{j,n+1}}\Big)< 0,
\end{align*}

then we get directly from \cref{eq1199}/\cref{eq1122}, that,
\begin{multline}\label{eq1222}
    \G^{'}(v_{j,n+1}) \leq     \G^{'}(v_{j,n}) +(v_{j,n} -v_{j,n+1})\Big(\frac{n-1}{2v_{j,n}} +\frac{n-1}{2v_{j,n+1}}\\ +\frac{1}{2}\B(v_{j,n})\cdot \widehat{v_{j,n}} + \frac{1}{2}\B(v_{j,n+1})\cdot \widehat{v_{j,n+1}})\Big)\G^{'}(v_{j,n})  +O_m(t^2).
\end{multline}

\bigskip

Momentarily ignoring the second order term above, we get from \cref{eq1222} the reversed differential inequality, when $t\to 0$, that
\begin{align}\label{diffineq''}
    \frac{\partial^{2}\G_i(\cdot,0)}{\partial r^{2}} +\frac{(n-1)}{r}\frac{\partial \G_i(\cdot,0)}{\partial r}+ \B\cdot \hat{r} \frac{\partial \G_i(,0)}{\partial r} \geq 0,
\end{align}
%\footnote{We get by integrating this inequality solely, that,
%\begin{align}\label{eq40'}
    %\frac{\partial \G}{\partial r}\Big|_{r} \geq C'\frac{\partial \G}{\partial r}\Big|_{b_{i,k}}\Big(\frac{b_{i,k}^{n-1}}{r^{n-1}}\Big),
%\end{align}}
This calculation was performed for a fixed $m$ as defined earlier. 

Define $\alpha_m:=\Big(\frac{(n-1)}{b_{i,k_0}} + \frac{M}{d_{m}^{1-\beta}}\Big)$, and $\beta_m:= 2\sup\limits_{\Omega'_m}|\G''(r)|$, then,  \cref{eq83} can also be written as, 
\begin{align}\label{eq115}
    (b''):|\G'(v_{j,n})|t_n \geq  |\G'(v_{j,n})|t - \alpha_m |\G'(v_{j,n})|t^2 - \beta_m  t^2.
\end{align}

In the case that $|\G'(u_{j+1})|=|\G'(v_{j,1})|\geq K_m t$, for some $K_m$, dependent on $m$, to be chosen sufficiently large in comparison to $\beta_m$, we get for $r=1$, from the condition (b'') of  \cref{eq115}, that, 
\begin{align}\label{eq1155}
    t_1 \geq t\big(1- \alpha_m t -\frac{\beta_m}{|\G'(v_{j,1})|} t\big) \geq t\big( 1 -\frac{\beta_m}{K_m} - \alpha_m t) \big).
\end{align}
On the other hand, when,  $|\G'(u_{j+1})|=|\G'(v_{j,1})|\leq K_m t$, then $|\G'(u_{j+1})|t\leq K_{m}t^2$. 

For any $n\geq 1$, in the situation of \cref{eq115}, we get as deduced in \cref{eq1122}, that, 
\begin{multline}\label{eq11222}
|\G^{'}(v_{j,n+1})| \geq    | \G^{'}(v_{j,n})| +(v_{j,n} -v_{j,n+1})\Big(\frac{n-1}{v_{j,n}}+ \frac{1}{2}\B(v_{j,n+1})\cdot \widehat{v_{j,n+1}}+ \frac{1}{2}\B(v_{j,n})\cdot \widehat{v_{j,n}}\Big)|\G^{'}(v_{j,n}) | \\ +O_m(t^2).
\end{multline}
On the other hand, note that in the corresponding case (a') of \cref{eq82}, there is no $O_m(t^2)$ term on the right side of the inequality.\footnote{Note that in $(a')$ in \cref{inq}, in order to get sufficient decay of $|\G'(v_{j,n})|$ in comparison to $|\G'(v_{j,n+1})|$, it is sufficient to just consider the summand $(n-1)/b_{i,k_0}$ and neglect the term $M/d_{m}^{1-\beta}$.}

We define the Riemann sum, $\Delta_{t}(u_j, u_{j+1})=\sup\limits_{w(t,u_j,u_{j+1})}\sum\limits_{p=0}^{N(t,u_j,u_{j+1})} t |\B(w_{p})|,$ where the supremum is taken over all sequences $w(t,u_j,u_{j+1})$,  where $w_{0}=u_j$ and $w_{N(t,u_j,u_{j+1})}=u_{j+1}$, and for each $p\geq 1$, we have $|w_p|=|w_{p-1}|+t$.

Then define $\Delta(u_j,u_{j+1})=\limsup \limits_{t\to 0} \Delta_{t}(u_j, u_{j+1})$. More generally, one defines for any $a,b\in \Omega$ with $|a|<|b|$, the term $\Delta(a,b)=\limsup \limits_{t\to 0} \Delta_{t}(a, b)$ in an identical manner.

Now, we show,
\begin{lemma}\label{lemma8}
    For all $t$ arbitrarily small in comparison to $\alpha^{-1}_m, \beta_{m}^{-1}$, when $|\G'(v_{j,1})|= |\G'(u_{j+1})|> K_m t $, for any $j\geq 1$, with $K_m\gg \beta_m$, there exist constants  $c, \Delta(\Omega)$, so that, $K'_m= e^{-\Delta(\Omega)}K_m$,  so that for all $n\geq 2$, we have $|\G'(v_{j,n})|> K'_m t $.
\end{lemma}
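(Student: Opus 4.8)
\textbf{Proof proposal for \cref{lemma8}.}

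The plan is to iterate the per-step inequalities already derived for the maximum-point sequence $v_{j,n}$—namely \cref{eq82} in sub-case (a$'$) of \cref{inq}, and \cref{eq11222} (which covers both sub-cases $(*)$ and $(**)$ of \cref{eq115}/\cref{eq83})—and to track $|\G'(v_{j,n})|$ along the iteration. First I would isolate which steps can decrease this quantity. In sub-case (a$'$) one has $|\G'(v_{j,n+1})|\ge|\G'(v_{j,n})|$ with \emph{no} error term. In sub-case $(**)$ the coefficient $\frac{n-1}{v_{j,n}}+\tfrac12(\B(v_{j,n+1})\cdot\widehat{v_{j,n+1}}+\B(v_{j,n})\cdot\widehat{v_{j,n}})$ is nonnegative, so \cref{eq11222} gives $|\G'(v_{j,n+1})|\ge|\G'(v_{j,n})|-O_m(t^2)$. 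The only genuinely lossy case is $(*)$, where that coefficient is negative; since $\frac{n-1}{v_{j,n}}>0$, its absolute value is then at most $\tfrac12(|\B(v_{j,n+1})|+|\B(v_{j,n})|)$, and \cref{eq11222} yields
\[
|\G'(v_{j,n+1})|\ \ge\ |\G'(v_{j,n})|\Bigl(1-\tfrac12(v_{j,n}-v_{j,n+1})\bigl(|\B(v_{j,n+1})|+|\B(v_{j,n})|\bigr)\Bigr)\ -\ O_m(t^2).
\]

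Second, I would establish a uniform lower bound on the radial step length in the lossy (b$''$) steps. Dividing condition (b$''$) as written in \cref{eq115} by $|\G'(v_{j,n})|$ and using the running bound $|\G'(v_{j,n})|\ge K'_m t$ together with $v_{j,n}-v_{j,n+1}\ge t_n$, one obtains $v_{j,n}-v_{j,n+1}\ge t_n\ge t\bigl(1-\alpha_m t-\beta_m/K'_m\bigr)\ge t/2$ provided $t$ is small and $K'_m\gg\beta_m$. Hence each (b$''$) step advances the radius inward by at least $t/2$, and since the total radial extent traversed is at most $|u_{j+1}|<1$, there are at most $2/t$ steps of type (b$''$). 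Consequently the additive errors accumulate to at most $(2/t)\cdot O_m(t^2)=O_m(t)$, a quantity of the form $c(m)t$ with $c(m)$ controlled by $\beta_m$. The (a$'$) steps contribute neither a decrease nor an error, so they need not be counted.

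Third, I would multiply the surviving per-step inequalities. Setting $c_n:=\tfrac12(v_{j,n}-v_{j,n+1})(|\B(v_{j,n+1})|+|\B(v_{j,n})|)$ for the $(*)$ steps and telescoping, one gets $|\G'(v_{j,n})|\ge|\G'(v_{j,1})|\prod_{k}(1-c_k)-c(m)t$. Now $\sum_k(v_{j,n}-v_{j,n+1})|\B(v_{j,n})|$ is, up to a factor $2$, a Riemann sum of precisely the kind defining $\Delta(\cdot,\cdot)$ just above \cref{lemma8}; taking $\Delta(\Omega)$ to be a bound on $\Delta(0,\partial\Omega)$ (enlarged by the constant $c$ coming from $1-x\ge e^{-cx}$ for small $x$), we get $\prod_k(1-c_k)\ge e^{-\Delta(\Omega)}$. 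The crucial point is that $\Delta(\Omega)<\infty$: since $|\B|\le M/\delta^{1-\beta}$ with $1-\beta<1$, we have $\int_0^1(1-r)^{-(1-\beta)}\,dr<\infty$, so the radial total variation of $|\B|$ up to the boundary is finite. Combining, $|\G'(v_{j,n})|\ge e^{-\Delta(\Omega)}K_m t-c(m)t$; choosing $K_m$ large enough relative to $\beta_m$ and the fixed number $e^{\Delta(\Omega)}$—admissible, as the hypothesis only requires $K_m\gg\beta_m$—this exceeds $K'_m t:=e^{-\Delta(\Omega)}K_m t$. To make legitimate the use of the running bound, I would phrase this as a bootstrap: if $n^\ast$ is the first index with $|\G'(v_{j,n^\ast})|\le K'_m t$, then the step-length and telescoping estimates are valid for all $n<n^\ast$ and force $|\G'(v_{j,n^\ast})|>K'_m t$, a contradiction; hence $|\G'(v_{j,n})|>K'_m t$ for every $n\ge2$.

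The main obstacle is the interlocking bookkeeping of the second-order $O_m(t^2)$ errors: controlling their accumulation requires the step-length bound $v_{j,n}-v_{j,n+1}\gtrsim t$, which in turn needs the running hypothesis $|\G'|\gtrsim K'_m t$ and therefore forces the bootstrap structure; and keeping the multiplicative drift loss uniformly bounded is exactly where the subcriticality $\beta>0$ is used, through the finiteness of $\Delta(\Omega)$. Neither ingredient suffices alone—it is the combination (large $K_m$ versus $\beta_m$, small $t$, and finite $\Delta(\Omega)$) that closes the argument, and the case $(*)$ is the only place the drift actually competes with the decay, so it must be tracked with care.
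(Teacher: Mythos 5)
Your proposal is correct and follows essentially the same route as the paper: bound the step lengths from below via \cref{eq1155} with $K'_m$ (the running/bootstrap hypothesis, which the paper invokes as "induction"), count the lossy steps by $\lesssim 1/t$ so the $O_m(t^2)$ errors accumulate only to $O_m(t)$, telescope the multiplicative factors into the Riemann-sum quantity $\Delta$, and use radial integrability of the drift to get a uniform bound $\Delta(\Omega)<\infty$, yielding $K'_m\approx e^{-c\Delta}K_m$. Your explicit isolation of the only genuinely lossy sub-case $(*)$ and the explicit contradiction-at-$n^\ast$ bootstrap are just slightly more detailed renderings of the paper's argument, not a different method.
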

We have to ensure that through successive iterations of \cref{eq11222} , the $O_m(t^2)$ term does not dominate the main term on the right of \cref{eq11222}.
\begin{proof}[Proof of \cref{lemma8}]
  Note that for any $n\geq 1$, whenever we are in the regime of case (a') of \cref{inq}, then from \cref{eq82}, we clearly get $|\G'(v_{j,n+1})|>|\G'(v_{j,n})|\Big( 1 + \frac{(n-1)}{b_{i,k_0}} t +\frac{M}{d_{m}^{1-\beta}}t \Big)$. In the case we are in the regime of (b') and thus \cref{eq11222}, first note that, $|v_{j,r}|-|v_{j, r+1}|\geq t_r$, for any $r\geq 1$. For any $1\leq r<n$, using induction, we note that in the regime of (b''), the condition of \cref{eq1155} holds with $K_m$ replaced with $K'_m$, i.e., $t\geq t_r \geq t\big(1- \beta_m/K'_m - t\alpha_m \big)$ . Thus $t_r$ is approximable by $t$ when $\beta_m /K'_m \ll 1$, and the number of times the $O_m(t^2)$ term is applied for the $j'$th iteration, is upper bounded by $\lesssim N_{j,n} <  (|u_{j+1}|-|v_{j,n}|)/t_r \approx  (|u_{j+1}|-|v_{j,n}|)/t $ where one ignores corrections of the order of $O(t^3)$ or higher. 

Thus, the total contribution of the error term is bounded by $N_{j,n} O_m(t^2)\lesssim (\text{diam}\Omega)t$. The main contribution now comes from either \cref{eq82} or the main term of the right of \cref{eq11222}, and one gets, when $t$ is small enough, by using the same argument as in \cref{eq73'}(with the reverse inequality), noting that $|v_{j,1}|>|v_{j,n}|$, 
\begin{multline}
    |\G'(v_{j,n})|\geq |\G'(v_{j,1})|\frac{|v_{j,1}|^{n-1}}{|v_{j,n}|^{n-1}}e^{-c\Delta(v_{j,n}, v_{j,1})} -C (\text{diam}\Omega)t +O_m(t^2) \\ \geq K_m t e^{-c\Delta(v_{j,n}, v_{j,1})} -C (\text{diam}\Omega)t +O_m(t^2)> \frac{1}{2}K_m t e^{-c\Delta(v_{j,n}, v_{j,1})} .
\end{multline}
We note that the drift is integrable, and so we have, $\Delta(a,b)<\Delta$ for any $a,b\in \Omega$ and for some uniform $\Delta\geq 0$, and it is enough to choose $K_m'\approx K_{m} e^{-c\Delta}$. 
\end{proof}

We can show that,
\begin{lemma}\label{lemma9}
    When $|\G'(v_{j,1})|= |\G'(u_{j+1})|\geq K_m t $, we get,
    \begin{align}\label{eq40''}
   \Big| \G'(u_{j}) \Big|\geq \Big|\G'(u_{j+1})\Big| \Big(\frac{|u_{j+1}|^{n-1}}{|u_{j}|^{n-1}}\Big)e^{-c\Delta(u_j,u_{j+1})}.
\end{align}
\end{lemma}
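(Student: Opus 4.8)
In effect \cref{eq40''} is the rigorous, integrated form of the differential inequality \cref{diffineq''} for the maximum points, and the plan is to obtain it by telescoping the discrete per-step estimates already derived for the inward iteration $v_{j,1}=u_{j+1},v_{j,2},v_{j,3},\dots$, whose radii $|v_{j,n}|$ decrease by at most $t$ per step, and then letting $t\to 0$. Fix $j$. The hypothesis $|\G'(v_{j,1})|=|\G'(u_{j+1})|\geq K_m t$ is exactly what is needed to invoke \cref{lemma8}: taking $K_m$ large relative to $\beta_m$ and to $\text{diam}\,\Omega$, it gives the uniform lower bound $|\G'(v_{j,n})|>K'_m t$ for all $n$, which forces each radial decrement $|v_{j,n}|-|v_{j,n+1}|$ to be comparable to $t$ and keeps the per-step error $O_m(t^2)$ in \cref{eq11222} negligible against the main term, of size $\asymp t\,|\G'(v_{j,n})|$.

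Iterating the per-step lower bound — \cref{eq82} in case~(a$'$), \cref{eq11222} in case~(b$'$), combined just as in the passage around \cref{eq73'} but with the inequalities reversed — from $n=1$ up to an arbitrary $n$, and writing $r_k=|v_{j,k}|$, gives
\begin{align*}
|\G'(v_{j,n})|\;\geq\;|\G'(u_{j+1})|\,\prod_{k=1}^{n-1}\Big(1+\tfrac{n-1}{r_k}(r_k-r_{k+1})\Big)e^{-(r_k-r_{k+1})|\B(v_{j,k})\cdot\hat v_{j,k}|}\;-\;C(\text{diam}\,\Omega)\,t+O_m(t^2).
\end{align*}
The first product is an upper Riemann sum in its logarithm and converges, as $t\to0$, to $\exp\!\big(\int_{r_n}^{|u_{j+1}|}\tfrac{n-1}{r}\,dr\big)=(|u_{j+1}|/r_n)^{n-1}$, while the second is bounded below by $e^{-\Delta_t(v_{j,n},u_{j+1})}$, which tends to $e^{-\Delta(v_{j,n},u_{j+1})}$ by the definition of $\Delta$.

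It remains to take $n=n^{*}$, the first index with $r_{n^{*}}\leq|u_j|$; since each decrement is at most $t$ we have $r_{n^{*}}=|u_j|+O(t)$, so $(|u_{j+1}|/r_{n^{*}})^{n-1}\to(|u_{j+1}|/|u_j|)^{n-1}$ and $\Delta(v_{j,n^{*}},u_{j+1})\to\Delta(u_j,u_{j+1})$, and because $v_{j,n^{*}}$ and $u_j$ are maximum points of level sets of $\G$ whose values differ by $O(t)$ and lie at comparable distance from the pole, the Harnack inequality on spheres about the pole together with interior gradient estimates gives $|\G'(v_{j,n^{*}})|=(1+o(1))|\G'(u_j)|$; letting $t\to 0$, so that the correction $C(\text{diam}\,\Omega)\,t+O_m(t^2)$ vanishes, yields \cref{eq40''} with a suitable constant $c$. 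The main obstacle is precisely this error control: the iteration has $\asymp t^{-1}$ steps each contributing an $O_m(t^2)$ correction, so the cumulative error is only $O(t)$ — of the same order as the smallest values of $|\G'|$ met along the way — which is why one must pass to the $t\to0$ limit, use the uniform lower bound $|\G'(v_{j,n})|>K'_m t$ from \cref{lemma8} to keep the relative per-step error controlled, and work on $\Omega'_m$ with $b_{i,k_0}$ (equivalently $1/L$) small so that $\tfrac{n-1}{r}+\B\cdot\hat r>0$ and every factor in the telescoping product genuinely exceeds one up to the drift exponential.
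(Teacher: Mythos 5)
Your overall strategy is the same as the paper's: telescope the per-step inequalities \cref{eq82} and \cref{eq11222} along the inward iteration $v_{j,1}=u_{j+1},v_{j,2},\dots$, use \cref{lemma8} to keep the accumulated $O_m(t^2)$ errors of total size $O(t)$, and recognize the resulting product as a Riemann sum that converges to the factor $(|u_{j+1}|/|u_j|)^{n-1}e^{-c\Delta(u_j,u_{j+1})}$ as $t\to 0$. However, there is a genuine gap where you assert that the lower bound $|\G'(v_{j,n})|>K'_m t$ from \cref{lemma8} ``forces each radial decrement $|v_{j,n}|-|v_{j,n+1}|$ to be comparable to $t$'' and hence that your terminal index $n^*$ with $r_{n^*}\le |u_j|$ is reached after $\asymp t^{-1}$ steps. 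That is only true for steps in regime (b$'$)/(b$''$), where \cref{eq1155} indeed gives $t_n\ge t(1-\beta_m/K'_m-\alpha_m t)$. In regime (a$'$) of \cref{inq} nothing bounds $t_n$, or the radial decrement, away from zero: the maximum point of the next level set can sit at essentially the same radius, so a priori the nonincreasing sequence $|v_{j,n}|$ could stall at a limit strictly larger than $|u_j|$ and your $n^*$ would never exist. The paper's proof is organized around exactly this dichotomy: either $|v_{j,n}|$ converges to some $v_j$ with $|v_j|>|u_j|$, in which case the steps are eventually all of type (a$'$) and \cref{eq82} multiplies $|\G'(v_{j,n})|$ by a factor $1+ct$ infinitely often inside a fixed compact annulus, forcing $|\G'(v_{j,n_k})|\to\infty$ and contradicting the local $C^{3,\alpha}$ regularity of $\G$ away from the pole (unless $\G'(u_{j+1})=0$, in which case \cref{eq40''} is trivial); or the iteration does reach within $t$ of the radius $|u_j|$, and only then does the telescoping you describe apply. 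Your proposal needs this termination argument (or an equivalent one); without it the telescoped product has no well-defined endpoint.

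A secondary point: your endpoint matching, claiming $|\G'(v_{j,n^*})|=(1+o(1))|\G'(u_j)|$ from the Harnack inequality together with interior gradient estimates, is not justified as stated. Harnack compares values of $\G$, not gradients, and maximum points of nearby level sets need not be close to each other as points, so continuity of $\G'$ does not by itself give a $(1+o(1))$ comparison of the radial derivatives at $v_{j,n^*}$ and at $u_j$. The paper is admittedly terse at the same spot, but its second case terminates the iteration at radius within $t$ of $|u_j|$ and passes to the limit there rather than relying on such a gradient comparison; if you keep your formulation you should either prove that comparison or restructure the last step along the paper's lines.
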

\begin{proof}
    This result will follow from repeatedly using \cref{eq82,eq11222}.

Recall that, $v_{j,1}=u_{j+1}$, and for each $r\geq 2$,  $v_{j,r+1}$ is the point of maximum of the level set $\G(v_{j,r}-t)$.
Hypothetically one of the two following cases happen:
\begin{itemize}\label{cases}
    \item The values $|v_{j,n}|$ converge, as $n\to \infty$, to some value $v_{j}$ with $|v_j|> |u_{j}|$, or
    \item  For some $n_0$, we have that $|v_{j,n_0}|-|u_{j+1}|=t^{**}\leq t$. 
   \end{itemize} 
    
     In the case where the sequence $|v_{j,n}|$ converges, as $n\to \infty$, and thus also a subsequence $v_{j,n_k}\to v_j$ for some value $v_{j}$ with $|v_j|> |u_{j}|$, then we are in the situation of \cref{eq82} for all large enough $k$, and thus either $\G'(v_{j,n_k})\to \infty$ as $k\to \infty$, which is a contradiction to the fact that $\G$ is locally in $C^{3,\alpha}$ outside the pole of the Green's function, or $\G'(u_{j+1})=0$ and we would be done by contradiction.

Otherwise in the second case, by using \cref{eq82} or the main term on the right hand side of \cref{eq11222}, whichever is applicable in each step depending on whether condition (a) or (b) is satisfied, noting by \cref{lemma8} that the total contribution of the $N_j O_m (t^2)\to 0$ as $t\to 0$, where $N_j \lesssim (|u_{j+1}|-|u_j|)/t$, we get the claimed result.
\end{proof}
Note that for each $j\geq 1$,  we have $|u_{j+1}|\geq|u_j|+t$, by construction.  Also, note that as $m$ gets large and thus $d_m \to 0$, the implied constant in the $O_m (t^2)$ term gets large, and for any fixed $m$, we choose the increment $t$ small enough.

We start the iteration from the maximum point on the sphere $S(0,1/L)$, and consider the first $j_0 \geq 1$ so that $|\G'(u_{j_0})|\leq K'_m t$. Then by the arguments in the proof of \cref{lemma8,lemma9}, we get that for all subsequent $j\geq j_0$, $|\G'(u_j)|\leq  K_m t e^{\Delta} $. In this case, as $t\to 0$, $\G(u_{j}+t)-\G(u_{j})\lesssim K'_{m}e^{\Delta}t^2= K_{m}t^2$ keeping  the contribution up to the second order in $t$. Starting on the sphere $S(0, 1/L)$ , till the level set $\G(\cdot)=1/m$, we have $N_m$ steps in the iteration where by a crude bound, $N_{m}t\leq \text{diam}\Omega$, and thus, the total change in the Green's function for $j\geq j_0$, is bounded by $C N_m e^{\Delta} K_m t^2\leq (\text{diam}\Omega) K_m t \to 0$ as $t\to 0$. For all $j\leq j_0$, the result of \cref{lemma9} holds.

The total change in the Green's function is given up to first order by,
\begin{align}
   \sum\limits_{j=1}^{J} \G(u_{j})-\G(u_{j}+t)\approx |\G'(u_{j})|t,
\end{align}
where for each $j$, we have $\G(u_j +t)=\G(u_{j+1})$, $|u_1|=  1/L$, and $\G(u_J)=1/m$.
This, in the limit as $t\to 0$, is bounded from above by,
\begin{align}
    \leq C'\Big| \frac{\partial \G}{\partial r}\Big|_{ b_{k}}b_{i,k}^{n-1}\Big(L^{n-2} - \frac{1}{r_{i,m}^{n-2}} \Big),
\end{align}
where $r_{i,m}$ is the point which is the maximum distance from the origin on the level set $\G(\cdot)=1/m$, and by definition, we have $\G(r_{i,m})=1/m$. Note again, that the second order contribution goes to $0$ in the limit as $t\to 0$.

After this step, we can take $m\to \infty$. 
Then in the limit of $m\to \infty$, we have that, 
\begin{multline}\label{eq900}
s^{*}_{i}=\G\Big(\frac{1}{L}\widehat{s_{max}},0\Big)   = \G \Big(\frac{1}{L}\widehat{s_{max}},0\Big)-\G(r_{i},0)\leq C'\Big| \frac{\partial \G}{\partial r}\Big|_{ b_{i,k}}b_{i,k}^{n-1}\Big(L^{n-2} - \frac{1}{r_{i,m}^{n-2}} \Big),\\ \leq C'\Big| \frac{\partial \G}{\partial r}\Big|_{ b_{i,k}}b_{i,k}^{n-1}L^{n-2} ,.
\end{multline}
where $r_{i}$ is the point which is the maximum distance from $y_i$ to the boundary, and by definition, we have $\G(r_{i},0)=0$ . 

Now, we will compare \cref{eq900} and \cref{eq800}. For this, we will use \cref{lemma}. Given any $i$, we choose $k$ large enough so that we have two points $a_{i,k},b_{i,k}\leq y_{i,\text{min}}$ satisfying \cref{lemma}. Now comparing the right hand sides of  \cref{eq800,eq900}, we conclude that $y_{i,\text{min}}$ can't be arbitrarily small, thus establishing \cref{imp}, and Case (i)  is established.
\bigskip
%It is important to note that, if the $a_{i,k}$ value chosen is sufficiently small, the decay of $|\frac{\partial \G}{\partial r}|_r$ via \cref{eq40} may be sufficiently fast so that we are in the regime that violates either of  \cref{one,two} after sufficiently many iterations of \cref{growth1}.

%When $n=3$,  again our discretized problem can be approximated by the differential inequality considered in \cref{diffineq'}, and thus  from \cref{eq40},  here we get a logarithmic decay of the Green's function, and by a similar calculation as above we get that,
%\begin{align}\label{eq70}
 %\G(r_1,y_i)-\G(r_2,y_i) \geq \frac{\tilde{C}}{a_{i,k}}\log\frac{r_2}{r_1}.
%\end{align}
%By choosing $i,k$ large enough, taking $r_1=y_{i,\text{min}}, r_2=\delta_{\Omega}(y_i)/L$, one gets the right hand side to be arbitrarily large, and this is a contradiction to \cref{eq44} for the case of $n=3$.

%\begin{align}\label{eq42}
%\G(y_{i,\text{min}},y_{i})-\G\Big(\frac{\delta_{\Omega}(y_1)}{L}\hat{r_0},y_i\Big)\gtrsim \frac{CC'}{(y_{i,\text{min}}^{n-2}\cdot(n-2))},
%\end{align}

\item[Case(ii).] The proof for Case (ii) follows with the same essential argument as that for Case(i) outlined above. In this case, for each large enough integer $N$, we have $f(s_N)>N f(2s_N)$ for some $s_N \geq \widetilde{s_{N}}$.  Then the preceding condition implies that there exist boundary points $z_{N}\in \partial\Omega_{N}, z_{2N}\in \partial\Omega_{2N}$  with $z_{2N}$ being the point at the minimum distance to $\partial\Omega_{2N}$ from the origin, and $z_N$ being the point at the maximum distance to $\partial\Omega_{N}$, and the ratio
\begin{align}\label{eq123123}|z_{N}|/|z_{2N}|\to \infty \  \text{as} \ N\to \infty\end{align}

In this case, consider the sphere $S(0,|z_N|)$ of radius $|z_N|$, which by definition is contained inside the sphere $S(0,1/2)$ since we have  $s_N \geq \widetilde{s_{N}}=\max\limits_{S(0,1/2)}\G(x,0)$. Then analogous to the earlier case, consider the point $s^{*}=\max\limits_{x\in S(0,|z_N|)}\G(x,0)=\G(z_N,0)$, by definition of the point $z_N$ being the point at the maximum distance to $\partial\Omega_N$ from the origin.

In this case, one argues identically as in Case(i), looking at the points of maximum and minimum of the level sets of the Green's function for the proof of \cref{imp}, using essentially again \cref{eq900,eq800}, using a sequence of points for each fixed $N$, labelled  $a_{N,k}, b_{N,k}|_{k=1}^{\infty}$ converging to $0$ as $k\to \infty$ satisfying the condition of \cref{lemma} 

This would give us, similar to \cref{eq900}, that, 
\begin{align}
    s_{N}^{*}\leq C'\Big| \frac{\partial \G}{\partial r}\Big|_{ b_{N,k}}b_{N,k}^{n-1}\frac{1}{|z_N|^{n-2}}.
\end{align}
Further, anaogous to \cref{eq800}, we would also have for some $C_1 >1$ that,
\begin{align}
    2s^{*}_N -\frac{1}{C_1} s^{*}_N \gtrsim C \Big| \frac{\partial\G}{\partial r}\Big|_{a_{N,k}}a_{N,k}^{n-1}\Big(\frac{1}{|z_{2N}|^{n-2}} -\frac{1}{|z_N|^{n-2}}\Big)
\end{align}
\end{enumerate}
Using these two equations above, as well as \cref{lemma}, using \cref{eq123123}, we get a contradiction as $N\to \infty$.

This concludes the proof of \cref{thm2}.
\end{proof}

We now prove \cref{thm3}.
\begin{proof}[Proof of \cref{thm3}]
    We use the pointwise upper bound on $\G(x,0)$, the Harnack principle for the solutions for $L$ and $L_T$, and a standard argument using the Green's functions for balls contained in B(0,1) and the maximum principle. We also use the standard fact mentioned in the introduction that $\G(x,y)=\G_T(y,x)$ where $G_T$ is the Green's function corresponding to the adjoint operator.

    Consider without loss of generality any point $w\in S(0,r)$. Call $\delta(w)=d_w$. Then consider the integer $m$ so that $2^m d_w \approx 1$. Then one can choose a sequence of $2m+1$ many points $\{0=w_0, w_1,\dots, w_{2m}=w\}$, where for each $0\leq i\leq 2m$, the point $w_i$ lies on the line joining $0$ and $w$, and for each $m$, we have $|w_{2i} -w_{2i-1}|\approx |w_{2i +1}-w_{2i}|\approx 2^{m-i} d_w$ where the implied constant is independent of $m$.
    
   By repeated use of Harnack's inequality, we have $\G(w_1,0)=\G(w_1,w_0)=\G_T (w_0,w_1)=K\G_T(w_2,w_1)=K\G(w_1,w_2)=K^2\G(w_3,w_2)=K^4 \G(w_5,w_4)=\dots=K^{2t}\G(w_{2t+1},w_{2t})=\dots$.

 Note that from \cref{thm2}, we have bounds for $\G(w_1,w_0)$.  Thus, by another use of the Harnack inequality, we get, for any $x\in S(w,\frac{1}{2}\delta(w))$, with the constant $K(m)$ dependent on $m$, that 
    \begin{align}
        \G(x,w)\leq K(m).
    \end{align}

Now, consider the Dirichlet Green's function $\G_{ball}(\cdot,w)$ for the ball $B(w,\frac{1}{2}\delta(w))$.

Then we have uniform estimates on $|\B|\leq M'(m)$ in this ball $B(w,\frac{1}{2}\delta(w))$. Then by arguments such as in \cite{KimSa,Morg} we have interior estimates for $\G_{ball}$, 
\begin{align}
    \G_{ball}(x,w)\leq \frac{K'(M'(m))}{|x-w|^{n-2}}\ \text{when} \  x\in B(w,\frac{1}{4}\delta(w)).
\end{align}
Then by the maximum principle, we have, noting that by definition  $\G_{ball}(x,w)=0$ when $x\in \partial B(w,\frac{1}{2}\delta(w))$, that,
\begin{align}
    \G(x,w)-\G_{ball}(x,w)\leq K(m), \ \ \text{when} \ \  x\in B(w,\frac{1}{2}\delta(w)).
\end{align}
In particular, for $x\in  B(w,\frac{1}{4}\delta(w))$, we have,
\begin{align}
    \G(x,w)\leq K(m) + \frac{K'(M'(m))}{|x-w|^{n-2}}. 
\end{align}
Now consider the length $r_w$(dependent on $m$) so that $ r_{w}^{n-2}= \frac{K'(M'(m))}{K(m)}$ In that case, we see that for any $x\in B(w,r_w)$, we have,
\begin{align}
    |x-w|^{n-2} <r_w ^{n-2}=\frac{K'(M'(m))}{K(m)} \Leftrightarrow K(m)<\frac{K'(M'(m))}{|x-w|^{n-2}},
\end{align}
and thus we have, 
\begin{align}
    \G(x,w)\leq \frac{2K(m)}{|x-w|^{n-2}}, \ \ \text{when}\ x\in B(w,r_w).
\end{align}
If we have $r_w\leq \frac{1}{4}\delta(w)$,  then by using Harnack inequality, we extend the inequality to the entire ball $B(w,\frac{1}{2}\delta(w))$, using Harnack inequality, and noting that the constant is altered by a factor dependent on $m=-\frac{\log (1-r)}{\log 2}$, thus being dependent on $r$.

In case $r_w >\frac{1}{4}\delta(w)$, then for any $x\leq \frac{1}{4}\delta(w)$, we already have the result in the ball $B(w,\frac{1}{4}\delta(w))$ which we also extend to $B(w,\frac{1}{2}\delta(w))$ using Harnack's inequality.

    For points in  $w\in B(0,r)$, we essentially repeat the same argument with a fewer number of steps and better constants. 
    This concludes the proof. 
\end{proof}
\begin{remark}
    The geometry of the unit ball is essential in the proof of the result, in Lemma 8, when the drift is unbounded in the generality considered here. In ongoing work, we are extending these results in more general bounded domains. We will also study adequate perturbations of the principal Laplacian term for which this construction holds.
\end{remark}

%\begin{remark}
%    Note that, by arguments similar to  \cref{lemma}, we can also prove the following:
%    \begin{lemma}\label{lemmaa}There is a uniform constant $M_1$ so that for any configuration of the drift  that satisfies \cref{Laplacian} and \cref{nonpositivee}, we have a sequence $a_{k}|_{k=1}^\infty$ of minimum points, so  that, 
%\begin{align}\label{eq89}
%     \Big|\frac{\partial\G}{\partial r}\Big|_{a_{k}} a_{k}^{n-1}\geq M_1.
%\end{align}
%\end{lemma}

%This follows by the use of the result of \cref{thm1}; that the Green's function has to grow sufficiently fast as one moves closer to the origin. A similar argument is also used in \cite{Pat24}, for the specific case of a drift whose magnitude is radially symmetric in the ball. Using \cref{lemmaa}, we can give another alternate argument for \cref{lemma8,lemma9} and the argument after \cref{lemma9} as well.

\section{Conclusion and future directions.}
In future work, we will consider more general bounded Lipschitz domains for which existence of solutions to this operator is guaranteed by the result of \cite{Ha24}.

%We also intend to study the question of the upper pointwise estimates, where the pole of the Green's function is arbitrarily close to the boundary. The method introduced here is expected to work for drifts that are in $L^{p}_{loc}$ for some appropriate $p>1$, or some drifts that diverge in the interior, and where one expects to obtain pointwise estimates on the Green's function with constants independent of $r$. This will be studied in future work.

We also intend to use the method of this manuscript, for the time independent Schroedinger equation and study newer classes of potentials for which one is able to obtain pointwise bounds on the Green's function. Specifically, one anticipates to first get a condition of the form in \cref{eq171} and continue the argument with appropriate modifications.  In that case, one would require a similar level of regularity on the potential term as we required for the drift term here. 

The argument presented here for the drift term does not immediately generalize to this broader set of operators with elliptic terms in divergence or non-divergence form. Crucially, the spherical form of the Laplacian is used in this proof. Specifically, a version of \cref{diffineq} does not hold for a general elliptic term in place of the Laplacian. We will study perturbations of the Laplacian in our operator, in the future. We will also consider the Green's function corresponding to the parabolic operator with such drifts and potentials.

%The other method in ongoing work involves a conservation of flux argument, which also doesn't seem to generalize to the case of an elliptic operator.

\section{Acknowledgements.} The author is grateful to Stephen Montgomery-Smith for many hours of extensive discussions, and to Seick Kim for feedback on this problem.

%\begin{figure}[htp]
%\centering
%\includegraphics[width=14cm]{f2big}
%\caption{Schematic diagram of the tippe top}
%\end{figure}   

\bigskip

Aritro Pathak, Indian Statistical Institute, Kolkata, India.

Email: ap7mx@missouri.edu.


\begin{thebibliography}{}
\footnotesize

%\bibitem[Ar68]{Aron} Aronson, D. G. (1968). Non-negative solutions of linear parabolic equations. Annali della Scuola Normale Superiore di Pisa-Classe di Scienze, 22(4), 607-694.

%\bibitem[CFMS81]{CFMS} Cafarelli, L., Fabes, E., Mortola, S., and Salsa, S. (1981). Boundary behavior of non negative solutions of elliptic operators in divergence form, Indiana J. Math., 30 (1981), 621, 640.

\bibitem[AiH08]{AiH08}Aikawa, H. and Hirata, K., 2008. Doubling conditions for harmonic measure in John domains. In \textit{Annales de l'institut Fourier} (Vol. 58, No. 2, pp. 429-445). 

\bibitem[An86]{An86} Ancona, Alano. "On strong barriers and an inequality of Hardy for domains in Rn." Journal of the London Mathematical Society 2, no. 2 (1986): 274-290.

\bibitem[CFMS81]{CFMS} Cafarelli, L., Fabes, E., Mortola, S., and Salsa, S. (1981). Boundary behavior of non negative solutions of elliptic operators in divergence form, Indiana J. Math., 30 (1981), 621, 640.

\bibitem[Ch91]{Ch91} Christ, Michael. "On the equation in weighted L 2 norms in C1 equation in weighted L 2 norms in C1." The Journal of Geometric Analysis 1, no. 3 (1991): 193-230.

\bibitem[Ev22]{Evans} Evans, Lawrence C. \textit{Partial differential equations}. Vol. 19. American Mathematical Society, 2022.


\bibitem[GT77]{Gt} Gilbarg, David, Neil S. Trudinger, David Gilbarg, and N. S. Trudinger. Elliptic partial differential equations of second order. Vol. 224, no. 2. Berlin: springer, 1977.

\bibitem[Gra23]{Grafakos} Grafakos, Loukas. Fundamentals of Fourier Analysis. Springer, 2023.

\bibitem[GW82]{GrWi} Grüter, M. and Widman, K.O., 1982. The Green function for uniformly elliptic equations. Manuscripta mathematica, 37(3), pp.303-342.

\bibitem[Ha24]{Ha24}Hara, Takanobu. "Global H\"{o} lder solvability of second order elliptic equations with locally integrable lower-order coefficients." arXiv preprint arXiv:2403.01104 (2024).

\bibitem[HL01]{Singdr} Hofmann, S., and Lewis, J. L. (2001). The Dirichlet problem for parabolic operators with singular drift terms (Vol. 719). American Mathematical Society.

\bibitem[HLMP22]{hlmp22}Hofmann, Steve, Linhan Li, Svitlana Mayboroda, and Jill Pipher. "The Dirichlet problem for elliptic operators having a BMO anti-symmetric part." \textit{Mathematische Annalen} 382, no. 1-2 (2022): 103-168.

\bibitem[HMMTZ21]{HMMTZ21}Hofmann, Steve, José María Martell, Svitlana Mayboroda, Tatiana Toro, and Zihui Zhao. "Uniform rectifiability and elliptic operators satisfying a Carleson measure condition." \textit{Geometric and Functional Analysis} 31, no. 2 (2021): 325-401.

\bibitem[KP01]{KP01}Kenig, Carlos E., and Jill Pipher. "The Dirichlet problem for elliptic equations with drift terms." \textit{Publicacions matemàtiques} (2001): 199-217.

\bibitem[KS19]{KimSa} Kim, S., and Sakellaris, G. (2019). Green’s function for second order elliptic equations with singular lower order coefficients. Communications in Partial Differential Equations, 44(3), 228-270.

\bibitem[MM22]{MM22} Maz'ya, Vladimir, and Robert McOwen. "The Fundamental Solution of an Elliptic Equation with Singular Drift." arXiv preprint arXiv:2209.00058 (2022).

\bibitem[Moo15]{Moo15}Mooney, C., 2015. Harnack inequality for degenerate and singular elliptic equations with unbounded drift. \textit{Journal of Differential Equations}, \textit{258}(5), pp.1577-1591.

\bibitem[Mor19]{Morg} Mourgoglou, Mihalis. "Regularity theory and Green's function for elliptic equations with lower order terms in unbounded domains." \textit{arXiv preprint arXiv:1904.04722} (2019).

\bibitem[Na58]{Na58}Nash, John. "Continuity of solutions of parabolic and elliptic equations." \textit{American Journal of Mathematics} 80, no. 4 (1958): 931-954.

\bibitem[Naz12]{Naz12}Nazarov, A. "The Harnack inequality and related properties for solutions of elliptic and parabolic equations with divergence-free lower-order coefficients." \textit{St. Petersburg Mathematical Journal} 23, no. 1 (2012): 93-115.

\bibitem[Pat24]{Pat24} Pathak, Aritro. "A counterexample for pointwise upper bounds on Green's function with a singular drift at boundary." arXiv:2405.13313.

\bibitem[Pog24]{Pog24} Poggi, Bruno. "Applications of the landscape function for Schrödinger operators with singular potentials and irregular magnetic fields." \textit{Advances in mathematics} 445 (2024): 109665.

\bibitem[Sak21]{Sake} Sakellaris, Georgios. "On scale-invariant bounds for the Green’s function for second-order elliptic equations with lower-order coefficients and applications." \textit{Analysis \& PDE} 14, no. 1 (2021): 251-299.

\bibitem[She94]{She94} Shen, Zhongwei. "On the Neumann problem for Schrödinger operators in Lipschitz domains." Indiana University Mathematics Journal (1994): 143-176.

\bibitem[She95]{She95} Shen, Zhongwei. "$ L^ p $ estimates for Schrödinger operators with certain potentials." In Annales de l'institut Fourier, vol. 45, no. 2, pp. 513-546. 1995.

\bibitem[She99]{She99} Shen, Zhongwei. "On fundamental solutions of generalized Schrödinger operators." Journal of Functional Analysis 167, no. 2 (1999): 521-564.


\bibitem[SSSZ12]{sssz12} Seregin, Gregory, Luis Silvestre, Vladimír Šverák, and Andrej Zlatoš. "On divergence-free drifts." \textit{Journal of Differential Equations} 252, no. 1 (2012): 505-540.

%\bibitem[Tao12]{Tao12}Tao, Xiangxing. "The regularity problems with data in Hardy–Sobolev spaces for singular Schrödinger equation in Lipschitz domains." Potential Analysis 36, no. 3 (2012): 405-428.

\bibitem[Wi24]{Wi} Wiener,Norbert. ``The Dirichlet Problem''. Journal of Mathematics and Physics, 3 (1924), no. 3, 127–146.


\end{thebibliography}
\end{document}